\numberwithin{equation}{section}
\theoremstyle{plain}
\newtheorem{theorem}{Theorem}[section]
\newtheorem{lemma}[theorem]{Lemma}
\newtheorem{sublemma}[theorem]{Sublemma}
\newtheorem{proposition}[theorem]{Proposition}
\newtheorem{corollary}[theorem]{Corollary}
\theoremstyle{definition}
\newtheorem{definition}[theorem]{Definition}
\newtheorem{example}[theorem]{Example}
\newtheorem{hypothesis}[theorem]{Hypothesis}
\newtheorem{remark}[theorem]{Remark}
\newcommand{\mf}{\mathfrak}
\newcommand{\mc}{\mathcal}
\newcommand{\kk}{{\Bbbk}}
\newcommand{\ug}{\underline{g}}
\newcommand{\uchi}{\underline{\chi}}
\let\c@equation\c@theorem  % incorporate equation numbering
\begin{document}

\title[Pointed Hopf actions on fields, II]
{Pointed Hopf actions on fields, II}

\author{Pavel Etingof and Chelsea Walton}

\address{Etingof: Department of Mathematics, Massachusetts Institute of Technology, Cambridge, Massachusetts 02139,
USA}

\email{etingof@math.mit.edu}

\address{Walton: Department of Mathematics, Temple University, Philadelphia, Pennsylvania 19122,
USA}

\email{notlaw@temple.edu}

\bibliographystyle{abbrv}       % Set the bibliography style to AMS

\begin{abstract}
This is a continuation of the authors' study of finite-dimensional pointed Hopf algebras $H$ which act  inner faithfully on commutative domains. As mentioned in Part I of this work, the study boils down to the case where $H$ acts inner faithfully on a field. These Hopf algebras are referred to as Galois-theoretical.

In this work, we provide classification results for finite-dimensional pointed Galois-theoretical Hopf algebras $H$ of finite Cartan type. Namely, we determine 
when such $H$ of type A$_1^{\times r}$ and some $H$ of rank two possess the Galois-theoretical property. 
Moreover, we provide necessary and sufficient conditions for Reshetikhin twists of small quantum groups to be Galois-theoretical.
\end{abstract}

\subjclass[2010]{13B05, 16T05, 81R50}
\keywords{field, finite Cartan type, Galois-theoretical, Hopf algebra action, pointed}

\maketitle

%\tableofcontents
%\setcounter{section}{-1}

%%%%%%%%%%%%%%%%%%%%%%%%%%%%%%%%%%%%%%%
%%%%%%%%%%%%%%%%%%%%%%%%%%%%%%%%%%%%%%%
%%%%%%%%%%%%%%%%%%%%%%%%%%%%%%%%%%%%%%%

\section{Introduction} \label{sec:intro}

The goal of this paper is to continue the study of finite-dimensional pointed Hopf actions on commutative domains over 
an algebraically closed field $\kk$ of characteristic zero, which we started in \cite{PartI}. 
\footnote{The reference numbers for the published version of \cite{PartI} differ from the reference numbers of the preprint version; see the Appendix of {\tt http://arxiv.org/abs/1403.4673}(v4) for the key.}
By \cite[Lemma~9 and Remark~3]{PartI}, this reduces to the study of Hopf actions on fields containing $\kk$. Moreover, by passing to appropriate Hopf quotients,
 it suffices to consider {\it inner faithful} actions, i.e., those not factoring through a `smaller' Hopf algebra (Definition~\ref{def:infaith}), and we will do so throughout the paper.
Since faithful actions of finite groups on fields are studied by Galois theory, in \cite{PartI} we made the following definition. 

\begin{definition} \label{def:GT}
A Hopf algebra $H$ over $\kk$ is said to be {\it Galois-theoretical} if it  acts inner faithfully on a field containing $\kk$.
\end{definition}

Examples of Galois-theoretical Hopf algebras are provided in \cite[Theorem~2]{PartI}. They include Taft algebras, $u_q(\mf{sl}_2)$, and twists of small quantum groups.
At the same time, it is shown in \cite{PartI} that many familiar examples of finite-dimensional Hopf algebras 
(such as ${\rm gr}(u_q(\mf{sl}_2))$ and generalized Taft algebras) are not Galois-theoretical. 

Our goal here is to classify Galois-theoretical finite-dimensional pointed Hopf algebras in as many cases as possible. We believe that the methods of this paper 
with some additional work can lead to a complete solution of this problem, at least in the special case when the group of grouplike elements 
is abelian.  

Recall that an important invariant of a pointed Hopf algebra is its {\it rank} $\theta$ (Definition~\ref{def:rank}). 
Our first result discusses the rank one case. Namely, let $H$ be a finite-dimensional pointed Hopf algebra of rank one. 
Then by \cite[Theorem~1(a)]{KropRadford}, $H$ is generated by the group of grouplike elements $G=G(H)$ and a $(g,1)$-skew-primitive element $x$, for some $g$ contained in the center 
$Z(G)$ of $G$. 

\begin{theorem}[Theorem~\ref{thm:rank1}] \label{thm:rank1intro}
 Let $H$ be a finite-dimensional pointed Hopf algebra of rank one, as above. Then, $H$ is Galois-theoretical if and only if the Hopf subalgebra generated by $\{g,x\}$ is a Taft algebra.
\end{theorem}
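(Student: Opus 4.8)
The plan is to treat the two directions separately, with the bulk of the work in the ``only if'' direction. For the ``if'' direction, suppose the Hopf subalgebra $H'$ generated by $\{g,x\}$ is a Taft algebra. Taft algebras are already known to be Galois-theoretical by \cite[Theorem~2]{PartI}, so $H'$ acts inner faithfully on some field $L \supseteq \kk$. The remaining task is to promote this to an inner faithful action of all of $H$. Here I would use the structure from \cite[Theorem~1(a)]{KropRadford}: $H$ is generated by $G = G(H)$ and $x$, with $g \in Z(G)$, and the relations involve a character $\chi\colon G \to \kk^\times$ governing the commutation of $G$ with $x$. The idea is to realize the full group $G$ by field automorphisms extending the given action — e.g. by taking a suitable field extension of $L$ on which $G$ acts faithfully (adjoining a transcendental on which $G/\langle g\rangle$ acts, say, via a regular representation, while extending the $\langle g\rangle$- and $x$-actions compatibly), and checking that the skew-primitive relation $x g' = \chi(g') g' x$ and $x^n$-type relations are preserved. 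The main point to verify is that inner faithfulness is not lost when passing from $H'$ to $H$: any Hopf ideal in the kernel of the $H$-action must be trivial, which reduces to the faithfulness of the $G$-action plus the inner faithfulness of the $H'$-action.

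The harder direction is ``only if'': assuming $H$ is Galois-theoretical, show $H'$ is a Taft algebra. The rank-one classification of \cite{KropRadford} says $H'$ is determined by the data $(g,\chi)$ with $\chi\colon G\to\kk^\times$, $g\in Z(G)$, $q:=\chi(g)$, and either $q=1$ (so $x$ is genuinely primitive or $x^{|q|}$-type relation degenerates) or $q$ is a root of unity of order $n>1$, with $H'$ being a Taft algebra precisely when $g$ has order exactly $n$ and the relation is $x^n = 0$ (rather than $x^n = g^n - 1$ or $x^n=0$ with $g^n\neq 1$, i.e. the ``book''/nilpotent-vs-non-restricted alternatives). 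So I must rule out: (i) $q = 1$; (ii) $q$ a primitive $n$-th root of unity but $\mathrm{ord}(g) \neq n$; (iii) $q$ primitive $n$-th root but $x^n = g^n - 1$ with $g^n \neq 1$ — i.e. the non-Taft rank-one Hopf algebras. For each of these, I would suppose an inner faithful action on a field $L$ and derive a contradiction. The key tool is that in a field of characteristic zero, the skew-primitive element $x$ acts as a $\chi$-twisted derivation, i.e. $x(ab) = x(a)b + (g\cdot a)\,x(b)$, so $x$ is a twisted derivation relative to the automorphism $\sigma_g$ of $L$ given by the action of $g$. One then analyzes the structure of such twisted derivations on a field: the subfield $L^{\sigma_g}$ of $\sigma_g$-invariants, the behavior of $x$ on it, and the constraint imposed by the relation $x^n = 0$ or $x^n = g^n - 1$ as operators on $L$.

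The core obstacle — and where I expect to spend the most effort — is case (i), $q=1$: here $g$ is central and acts as an automorphism commuting with the ordinary (or $\sigma_g$-twisted, but now ``untwisted'' in the relevant sense) derivation $x$, and the relation is essentially $x^p = 0$ for some $p$ (or $x$ primitive with a group-like grouplike twist), which should force $x$ to act nilpotently as a derivation on a field of characteristic zero — but a nonzero derivation on a characteristic-zero field is never nilpotent (if $D(t)\neq 0$ then $D^k(t^k) = k!\,D(t)^k \neq 0$, roughly), so $x$ acts as zero, contradicting inner faithfulness once one checks the group part alone cannot be inner faithful on $H$. A parallel argument handles the relation $x^n = g^n-1$: evaluating both sides as operators and using that $x$ is a $\sigma_g$-twisted derivation, one shows the left side is ``too nilpotent'' to equal the nonzero scalar-type operator $\sigma_g^{\,n} - 1$ unless forced into the Taft case. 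I would also invoke, where convenient, the negative results of \cite{PartI} on generalized Taft algebras and on $\mathrm{gr}(u_q(\mf{sl}_2))$-type algebras to dispose of the sub-cases that are exactly those non-examples. Throughout, the characteristic-zero hypothesis on $\kk$ and the fact that the target is a \emph{field} (no zero divisors, so $x$ acting nilpotently on products is genuinely restrictive) are what make the rigidity work.
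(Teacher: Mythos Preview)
Your proposal is correct, and the overall strategy (Krop--Radford classification plus ruling out the non-Taft generalized Taft algebras) matches the paper's. The difference is one of economy: the paper dispatches both directions almost entirely by citation. For ``only if'', it observes that the Hopf subalgebra $H'$ generated by $\{g,x\}$ is a generalized Taft algebra $T(n,m,\alpha)$ by Krop--Radford, that Hopf subalgebras of Galois-theoretical Hopf algebras are Galois-theoretical by \cite[Proposition~10(3)]{PartI}, and that a generalized Taft algebra is Galois-theoretical only if it is an ordinary Taft algebra by \cite[Proposition~21]{PartI}. For ``if'', the paper invokes \cite[Proposition~17]{PartI} for $H'$ and then Proposition~\ref{prop:minimal} (and, in the non-abelian case, Corollary~\ref{cor:GnotAbel}) to pass from $H'$ to $H$.

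Your more hands-on arguments would also work, but note two points. First, your case~(i) ($q=\chi(g)=1$) is vacuous: rank one forces $x$ to be a nontrivial $(g,1)$-skew-primitive, hence $\chi(g)\neq 1$ in the Krop--Radford setup. Second, your twisted-derivation computations to exclude cases~(ii) and~(iii) would reproduce the content of \cite[Proposition~21]{PartI}, which you already note you might cite; doing so is exactly the paper's route and avoids the operator-level analysis entirely. Likewise, your explicit field-extension construction for the ``if'' direction is essentially the content of Theorem~\ref{thm:H(G,theta)} and Corollary~\ref{cor:GnotAbel}, which the paper has already packaged.
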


For finite-dimensional pointed Hopf algebras $H$ of higher rank, we make the following assumptions for the rest of the paper, unless stated otherwise.

\begin{hypothesis}\label{hyp:intro} Take $H$ to be a finite-dimensional pointed Hopf algebra and assume that $G=G(H)$ is abelian. Now the associated coradically graded Hopf algebra gr($H$) is isomorphic to the bosonization of a Nichols algebra $\mf{B}(V)$ by $\kk G$ (see Proposition~\ref{prop:Angiono}). In other words, $H$ is a lifting of $\mf{B}(V) \# \kk G$. Assume that $\mathfrak{B}(V)$ is of {\it finite Cartan type} (see Definition~\ref{def:V,c}(b)).  
\footnote{In fact, all finite-dimensional pointed Hopf algebras with $G$ abelian, and with all prime divisors of $|G|$ being $>7$ (subject to additional mild conditions), are liftings of bosonizations of Nichols algebras of finite Cartan type by $\kk G$ \cite{AndSch:pointed}. See Theorem~\ref{thm:finiteCartan} for more details.}
\end{hypothesis}

Under the hypotheses above, $H$ is generated by $G$ and $(g_i, 1)$-skew-primitive elements $x_i$ for $g_i \in G$, so that $g_i x_j = \chi_j(g_i) x_j g_i$, where $\chi_j$ is a character of $G$. 
If $x_i^{n_i}=0$ for some $n_i \geq 2$, then $H$ is a finite-dimensional Hopf quotient of the (typically 
infinite-dimensional) pointed Hopf algebra $H(G,g_1,\dots,g_\theta,\chi_1,\dots,\chi_\theta)$ of rank~$\theta$ from Definition~\ref{def:H(G,theta)}. 

For this reason, the following theorem will play a central role in this paper.
For any $g\in G$ and $\chi\in \widehat{G}$ (the character group of $G$), 
let $I_{g,\chi}:=\lbrace{1\le i\le \theta ~|~g_i=g,\chi_i=\chi\rbrace}$.

\begin{theorem}[Theorem~\ref{thm:H(G,theta)}] \label{thm:H(G,theta)intro}
Let $L$ be a field containing $\kk$ and equipped with a faithful action of $G$. 
Then, the extensions of the $G$-action on $L$ to a (not necessarily inner faithful) $H(G, g_1,\dots,g_\theta, \chi_1,\dots,\chi_\theta)$-action
are defined by the formula
\begin{equation} 
x_i \mapsto w_i(1-g_i),
\end{equation}
where $w_i \in L$ is such that  $g \cdot w_i = \chi_i(g) w_i$ for all $g \in G$. 

Moreover,  for each $g,\chi$, the skew-primitive elements $\lbrace{x_i ~|~ i\in I_{g,\chi}\rbrace}$ are linearly independent  as $\kk$-linear operators on $L$
if and only if so are the elements $w_i$; this can be achieved by an appropriate choice of  $w_i$.
\footnote{In many cases, $|I_{g,\chi}| \le 1$, and the condition that $\lbrace{x_i ~|~ i\in I_{g,\chi}\rbrace}$ are linearly independent boils down to the 
condition $x_i\ne 0$ as operators on $L$.}  
\end{theorem}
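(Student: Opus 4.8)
The proof has two parts: (1) show that every $H(G,g_1,\dots,g_\theta,\chi_1,\dots,\chi_\theta)$-action on $L$ extending the given $G$-action is of the stated form $x_i\mapsto w_i(1-g_i)$; and (2) establish the linear-independence criterion.

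For part (1), I would start from the defining relations of $H(G,g_1,\dots,g_\theta,\chi_1,\dots,\chi_\theta)$ (Definition~\ref{def:H(G,theta)}, which I am assuming): the group $G$ acts as given, and each $x_i$ is a $(g_i,1)$-skew-primitive element satisfying $g x_i = \chi_i(g) x_i g$ in the algebra. Since $x_i$ is skew-primitive with $\Delta(x_i)=x_i\otimes 1+g_i\otimes x_i$, the measuring axiom forces $x_i$ to act on $L$ as a $(\mathrm{id},\sigma_{g_i})$-skew derivation, where $\sigma_{g_i}$ denotes the field automorphism by which $g_i$ acts: that is, $x_i\cdot(ab)=(x_i\cdot a)b + (g_i\cdot a)(x_i\cdot b)$. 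The commutation relation $g x_i = \chi_i(g)x_i g$ then says the operator $x_i$ is $\chi_i$-isotypic for the $G$-action by conjugation on $\mathrm{End}_\kk(L)$. The key observation is that a $\sigma$-skew derivation of a field $L$ (with $\sigma$ an automorphism of finite order, in particular $\sigma\ne\mathrm{id}$ here since the action is faithful and $g_i\ne1$ once $x_i\ne0$) is automatically \emph{inner}: it must be of the form $a\mapsto w(a - \sigma(a))= w(1-g_i)\cdot a$ for a unique $w\in L$, because $\sigma$-derivations on a field form a one-dimensional $L$-space spanned by $1-\sigma$. Plugging this back into the conjugation relation, $g\cdot\bigl(w(1-g_i)\bigr)\cdot g^{-1} = (g\cdot w)(1-g_i)$, so the $\chi_i$-isotypy is equivalent to $g\cdot w_i=\chi_i(g)w_i$ for all $g$. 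This gives both the form of the action and the constraint on $w_i$; conversely one checks directly that any such $w_i$ yields a well-defined action (the only relations to verify are the skew-primitivity, already built in, and $x_i^{n_i}=0$ if that relation is present in $H(G,\dots)$ — but it is not, since $H(G,\dots)$ is the algebra \emph{without} the nilpotency relations).

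For part (2), fix $g,\chi$ and consider $\{x_i : i\in I_{g,\chi}\}$, all of which act as $w_i(1-g)$ with $g\cdot w_i=\chi(g)w_i$. A $\kk$-linear relation $\sum_i c_i x_i = 0$ in $\mathrm{End}_\kk(L)$ reads $\bigl(\sum_i c_i w_i\bigr)(1-g)=0$. Since $g\ne1$ acts as a nontrivial automorphism ($G$ acts faithfully), $1-g$ is a nonzero operator, and in fact $1-g$ is injective on $L$ modulo the fixed field — more precisely $(\sum c_i w_i)(1-g)=0$ as an operator forces $\sum c_i w_i=0$ (apply to any $a$ with $g\cdot a\ne a$, which exists, to get $\sum c_i w_i\cdot(a-g\cdot a)=0$, i.e. $\sum c_i w_i=0$ since $L$ is a field and $a-g\cdot a\ne0$). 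Hence the $x_i$ are linearly independent as operators iff the $w_i$ are linearly independent over $\kk$ in $L$. Finally, to see this can be achieved: the $w_i$ with $i\in I_{g,\chi}$ are required only to lie in the $\chi$-eigenspace $L_\chi:=\{w\in L : g'\cdot w=\chi(g')w\ \forall g'\}$, which is a one-dimensional vector space over the fixed field $L^G$; since $L^G$ is infinite (it contains $\kk$, which is infinite as $\mathrm{char}\,\kk=0$), we may pick $|I_{g,\chi}|$ elements of $L_\chi$ that are linearly independent over $\kk$ — e.g. $w_i = \lambda_i w_0$ for a fixed nonzero $w_0\in L_\chi$ and $\lambda_i\in L^G$ chosen $\kk$-linearly independent (possible as $[L^G:\kk]$ may be taken large, or more simply since $L^G\supseteq\kk$ and we can use that $L_\chi$ is infinite-dimensional over $\kk$).

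**Main obstacle.** The crux is the structural fact that a $\sigma$-skew derivation of a \emph{field} is always inner when $\sigma\ne\mathrm{id}$ — i.e. that $\mathrm{Der}_\sigma(L)$ is the one-dimensional $L$-module $L\cdot(1-\sigma)$. This is a clean statement but needs care: one shows a $\sigma$-derivation $D$ is determined by its restriction to any subfield over which $L$ is generated, and then a direct computation on $L=L^G(\text{generators})$ (or an appeal to the fact that $H^1$ of the relevant complex vanishes because $1-\sigma$ is already surjective onto the "trace-zero" part) pins down $D = D(1)/(1-\sigma^{-1})\cdots$ — the bookkeeping of which scalar $w$ appears is the only genuinely fiddly point. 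Everything else (the measuring/conjugation translation, the eigenspace dimension count, the infinitude argument) is routine.
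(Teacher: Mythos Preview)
Your forward-direction argument is correct and in fact more streamlined than the paper's. The paper decomposes $L$ into $\widehat{G}$-eigenspaces via the Normal Basis Theorem, writes $x_i \cdot u_\alpha = c_i(\alpha) u_{\alpha\chi_i}$, and solves an explicit recursion for $c_i(\alpha)$; to start that recursion it invokes the nilpotency relation $x_i^{n_i}=0$ (through \cite[Theorem~11(i)]{PartI}) to force $c_i(\alpha)=0$ whenever $\alpha(g_i)=1$. Your observation that any $\sigma$-skew derivation of a commutative field with $\sigma\neq\mathrm{id}$ is inner --- a one-line consequence of applying $D$ to $ab=ba$ and dividing by $a-\sigma(a)\neq0$ --- bypasses all of this and does not use nilpotency at all.

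There is, however, a genuine slip in your converse: you assert that $x_i^{n_i}=0$ is \emph{not} a relation of $H(G,\underline{g},\underline{\chi})$, but by Definition~\ref{def:H(G,theta)} it \emph{is}. So you must verify that $\bigl(w_i(1-g_i)\bigr)^{n_i}=0$ as an operator on $L$. Fortunately this holds automatically: using $g_iw_i=q_{ii}w_ig_i$ as operators (where $q_{ii}=\chi_i(g_i)$ is a primitive $n_i$-th root of unity and $g_i$ has order $n_i$), one computes
\[
\bigl(w_i(1-g_i)\bigr)^{n_i}=w_i^{\,n_i}\prod_{j=0}^{n_i-1}(1-q_{ii}^{\,j}g_i)=w_i^{\,n_i}(1-g_i^{n_i})=0.
\]
With this correction your converse is complete. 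Your linear-independence argument and the ``can be achieved'' step (using that $L^G$, hence each eigenspace $L_\chi$, is infinite-dimensional over the algebraically closed field $\kk$) agree with the paper's.
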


Using this result, we provide in Example~\ref{sl2ex}
an illustration of how to construct an inner faithful action of a finite-dimensional pointed Hopf algebra on a field as an extension of a faithful group action. 

Now we continue our study in the coradically graded case. We have the following result for type A$_1^{\times \theta}$.

\begin{theorem}[Theorem~\ref{thm:B(G,theta)}] \label{thm:B(G,theta)intro}
Let $H$ be a finite-dimensional pointed coradically graded  Hopf algebra of finite Cartan type A$_1^{\times \theta}$. Then, $H$ is Galois-theoretical if and only if the Hopf subalgebra $H'$ of $H$ generated  by $\{g_1, \dots, g_{\theta}, x_1, \dots, x_{\theta}\}$ is the tensor product of 
\begin{itemize}
\item Taft algebras $T(n,\zeta)$, 
\item Nichols Hopf algebras $E(n)$, and 
\item book algebras $\mathbf{h}(\zeta,1)$. 
\end{itemize}
\end{theorem}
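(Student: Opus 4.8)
The plan is to reduce the problem to Theorem~\ref{thm:H(G,theta)intro} and then to a careful analysis of when the relevant Nichols-type relations can be realized by the operators $w_i(1-g_i)$ on a field. First I would observe that since $H$ is coradically graded of type A$_1^{\times\theta}$, the generators $x_i$ satisfy $x_i^{n_i}=0$ (where $n_i$ is the order of $\chi_i(g_i)$) together with the quantum Serre-type relations, which for type A$_1^{\times\theta}$ are simply the relations $[x_i,x_j]_c=0$ for $i\neq j$ (the braided commutator vanishes since there are no edges in the Dynkin diagram). Thus $H$ is a Hopf quotient of $H(G,g_1,\dots,g_\theta,\chi_1,\dots,\chi_\theta)$, and any inner faithful action of $H$ on a field $L$ restricts to a faithful action of $G$ (by inner faithfulness, $G$ must act faithfully since a nontrivial kernel would give a proper Hopf quotient through which the action factors — this needs the standard argument that the kernel of a group action extends to a Hopf ideal). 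So by Theorem~\ref{thm:H(G,theta)intro} the action is given by $x_i\mapsto w_i(1-g_i)$ with $g\cdot w_i=\chi_i(g)w_i$, and we may assume the $w_i$ are chosen so that within each block $I_{g,\chi}$ the $x_i$ act as linearly independent operators.

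Next I would translate the remaining structural constraints into conditions on the data $(g_i,\chi_i)$. The key computation is: given $x_i\mapsto w_i(1-g_i)$ and $x_j\mapsto w_j(1-g_j)$, when does the braided commutator relation $[x_i,x_j]_c=0$ hold as operators on $L$, and when is the nilpotency relation $x_i^{n_i}=0$ compatible with $x_i$ acting nonzero (which inner faithfulness forces for the relevant $i$)? Computing $x_i^2$ on $L$: $x_i^2 = w_i(1-g_i)w_i(1-g_i) = w_i(w_i - \chi_i(g_i)g_i w_i)(1-g_i)\cdots$ — carrying this out, $x_i^{n_i}=0$ holds automatically precisely because $\prod_{k=0}^{n_i-1}(1-\chi_i(g_i)^k g_i^{n_i}\cdots)$ telescopes, but $x_i^{m}\neq 0$ for $m<n_i$ requires $w_i\neq 0$ and $g_i\neq 1$; the subtle point is whether $x_i$ can be nonzero at all, i.e., whether $g_i\neq 1$ — and if $g_i=1$ the element $x_i$ is primitive, acts as a derivation, hence is zero on a field of characteristic zero, so inner faithfulness forces $g_i\neq 1$ for all $i$, which in particular means the diagonal entry $\chi_i(g_i)\neq 1$ (else $x_i^2=0$ would fail to be a relation and $H$ infinite-dimensional) — these are exactly the constraints defining when the subalgebra $\langle g_i,x_i\rangle$ is a Taft algebra. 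The off-diagonal conditions $\chi_i(g_j)\chi_j(g_i)\in\{1,\chi_i(g_i)^{-1}\cdot(\text{finite Cartan value})\}$: for type A$_1\times$A$_1$ the Cartan condition says $\chi_i(g_j)\chi_j(g_i)=1$ or $=$ some root of unity making a finite-dimensional Nichols algebra, and one checks that the only way two commuting copies $x_i\mapsto w_i(1-g_i)$, $x_j\mapsto w_j(1-g_j)$ can coexist inner-faithfully on a field is if $(\langle g_i,g_j,x_i,x_j\rangle)$ is a Taft$\otimes$Taft, or $E(2)$ (when $\chi_i(g_i)=\chi_j(g_j)=-1$ and the cross terms make a Clifford-type algebra), or a book algebra $\mathbf{h}(\zeta,1)$. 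I would verify each of the three listed algebras is Galois-theoretical by exhibiting explicit faithful actions (this is presumably already in \cite{PartI} for Taft and $E(n)$, and the book algebra case is the new input — construct a faithful $G$-action where $G$ is the relevant abelian 2-generator group and pick eigenvectors $w_1,w_2$), and conversely that no other possibility survives.

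The main obstacle I expect is the converse direction: showing that if $H$ is Galois-theoretical then $H'$ \emph{must} be a tensor product of algebras from the list — i.e., ruling out all other type A$_1^{\times\theta}$ configurations. The heart of this is a linking/2-cocycle analysis: in type A$_1^{\times\theta}$ the generators $x_i$ can be ``linked'' (the relation $[x_i,x_j]_c = \lambda_{ij}(1-g_ig_j)$ rather than $=0$) when $g_ig_j\neq 1$ and $\chi_i\chi_j=1$, and one must show that the only linkings realizable on a field are those giving Nichols Hopf algebras $E(n)$ and book algebras. Concretely, if $x_i\mapsto w_i(1-g_i)$ and $x_j\mapsto w_j(1-g_j)$ with $\chi_j=\chi_i^{-1}$ and $g_ig_j\neq 1$, compute $x_ix_j - \chi_j(g_i)x_jx_i$ as an operator and determine for which scalar values $\lambda_{ij}$ the identity $x_ix_j-\chi_j(g_i)x_jx_i = \lambda_{ij}(1-g_ig_j)$ can hold on all of $L$ — this forces a relation among $w_iw_j$, $\chi_i(g_j)$, and $\lambda_{ij}$ that, combined with requiring $g_i, g_j$ to have appropriate orders and the overall algebra to be finite-dimensional, pins down exactly the three families. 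I would organize this as a finite case check on the ``generalized Dynkin diagram with linking graph,'' using the classification of such data (Angiono's / Andruskiewitsch–Schneider's results, already invoked in Hypothesis~\ref{hyp:intro}) to enumerate possibilities and then eliminating each non-listed one by the operator-identity obstruction above; the bookkeeping is routine but the identification of the book algebra as the genuinely new surviving case is the conceptual crux.
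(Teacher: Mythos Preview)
Your proposal has a real gap stemming from a confusion between the coradically graded case (this theorem) and the lifted case (Theorem~\ref{thm:liftQLS}). In the coradically graded setting there are \emph{no} linking parameters: the relations are exactly $x_ix_j=\chi_j(g_i)x_jx_i$ with right-hand side zero, not $\lambda_{ij}(1-g_ig_j)$. Your entire ``main obstacle'' paragraph is therefore aimed at the wrong target; the condition $\chi_i\chi_j=1$ with $g_ig_j\neq 1$ that you set up to analyze linkings plays no role here.

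What actually drives the converse is the computation you allude to but never carry out. Writing $x_i\mapsto w_i(1-g_i)$ and expanding $x_ix_j-\chi_j(g_i)x_jx_i$ using $g_iw_j=\chi_j(g_i)w_jg_i$ and the Cartan-type constraint $\chi_i(g_j)\chi_j(g_i)=1$ gives the operator $w_iw_j\,(1-\chi_j(g_i))(1-g_ig_j)$. Inner faithfulness forces $w_iw_j\neq 0$, so the group-algebra element $Q_{ij}=(1-\chi_j(g_i))(1-g_ig_j)$ must vanish, yielding the sharp dichotomy: for each $i\neq j$, either $\chi_j(g_i)=1$ or $g_ig_j=1$. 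This dichotomy is the missing structural ingredient. The paper then organizes it via a graph $\Gamma$ on $\{1,\dots,\theta\}$ with an edge $i$---$j$ iff $\chi_j(g_i)\neq 1$ (hence $g_ig_j=1$), and shows by short combinatorial sublemmas that each connected component of $\Gamma$ gives a Taft algebra (one vertex), a book algebra $\mathbf{h}(\zeta,1)$ (two vertices, order $>2$), or $E(n)$ (complete graph, order $2$), and that $B'$ splits as the tensor product over components. Your sketch never reaches the dichotomy and hence has no mechanism to force the trichotomy of factors; the vague ``finite case check on the generalized Dynkin diagram with linking graph'' does not substitute for it. For the ``if'' direction, note also that all three building blocks (including the book algebra) are already shown Galois-theoretical in \cite{PartI}, and the passage from $H'$ to $H$ is Proposition~\ref{prop:minimal}, not a direct construction.
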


\noindent See \cite[Sections 3.1, 3.2, 3.4]{PartI} for the presentations of $T(n,\zeta)$ (=:$T(n)$), $E(n)$, and $\mathbf{h}(\zeta,1)$, respectively.

The next theorem describes the Galois-theoretical properties of finite-dimensional pointed coradically graded  Hopf algebras of rank two.

\begin{theorem}[Theorem~\ref{thm:ranktwo}] \label{thm:ranktwointro}
Let $H$ be a finite-dimensional pointed coradically graded  Hopf algebra of rank two, subject to condition \eqref{eq:conditions} on the values of $\chi_i(g_i)$.
Then, $H$ is Galois-theoretical if and only if the Hopf subalgebra $H'$ generated by $g_1, g_2, x_1, x_2$ is one of the following:
\begin{enumerate}
\item of type A$_1 \times$A$_1$, namely
\begin{itemize}
\item the tensor product of Taft algebras  $T(n,\zeta) \otimes T(n',\zeta')$ for $n, n' \geq 2$, or
\item the 8-dimensional Nichols Hopf algebra $E(2)$, or 
\item the book algebra $\mathbf{h}(\zeta,1)$;
\end{itemize}
\item of type A$_2$, B$_2$, or G$_2$ with $\chi_2(g_1) = 1$ or $\chi_1(g_2)=1$ \textnormal{(}here, $H'$ is isomorphic to a twist $u_q^{\geq 0}(\mathfrak{g})^{J}$\textnormal{)}; or
\item one of the $3^4$-dimensional (resp. $5^5$-dimensional, $7^7$-dimensional)  Hopf algebras of type A$_2$ (resp. B$_2$, G$_2$) from \cite[Theorem~1.3(ii) (resp. (iii), (iv))]{AndSch:quantum}, where $\chi_2(g_1)$, $\chi_1(g_2) \neq 1$.
\end{enumerate}
\end{theorem}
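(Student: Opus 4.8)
\emph{Reduction to $H'$ and to types A$_2$, B$_2$, G$_2$.}
Since the braiding underlying $\mf{B}(V)$ only involves the scalars $\chi_j(g_i)$, we have $H'=\mf{B}(V)\#\kk G'$ with $G'=\langle g_1,g_2\rangle\le G$, and $H'$ is again coradically graded of rank two of the same Cartan type. Arguing as in the proof of Theorem~\ref{thm:B(G,theta)intro}, I would first show that $H$ is Galois-theoretical if and only if $H'$ is: if $H$ acts inner faithfully on a field $L$ then $G$, hence $G'$, acts faithfully on $L$ by \cite{PartI}, and the defining relations of $\mf{B}(V)$ hold for the $H'$-action on $L$; conversely, a faithful $G'$-action on a field extends to a faithful $G$-action on a rational function field extension in which the eigenvectors $w_i$ can still be chosen as monomials, so any inner faithful $H'$-action extends to an inner faithful $H$-action. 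Thus we may assume $H=H'$ and $G=\langle g_1,g_2\rangle$. For Cartan type A$_1\times$A$_1$ the statement is precisely Theorem~\ref{thm:B(G,theta)intro} with $\theta=2$, so henceforth $\mf{B}(V)$ has type A$_2$, B$_2$, or G$_2$; in particular $\chi_2(g_1)\chi_1(g_2)\ne1$, so at least one of $\chi_2(g_1),\chi_1(g_2)$ is nontrivial.

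\emph{The ``if'' direction.}
For each algebra in the list I would exhibit an inner faithful action on a field. In case (2), $H'$ is a cocycle twist $u_q^{\geq0}(\mf g)^{J}$ and is Galois-theoretical by \cite[Theorem~2]{PartI}. In case (3)---the $3^4$-, $5^5$-, $7^7$-dimensional algebras of \cite[Theorem~1.3]{AndSch:quantum}, for which a short computation with the braiding data shows $g_1=g_2$ and $\chi_1(g_1)$ a primitive $3$rd (resp.\ $5$th, $7$th) root of unity---I would construct the action directly from Theorem~\ref{thm:H(G,theta)intro}: let $G=\langle g_1\rangle$ act faithfully on a rational function field $L$, choose $\chi_1,\chi_2$-eigenvectors $w_1,w_2\in L$ that are $\kk$-linearly independent, set $x_i\mapsto w_i(1-g_i)$, and verify that the (finitely many) defining relations of $\mf{B}(V)$ hold as operators on $L$ and that the resulting $H$-action is inner faithful---the last point using the ``moreover'' clause of Theorem~\ref{thm:H(G,theta)intro} together with a dimension count.

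\emph{The ``only if'' direction.}
Suppose $H$ acts inner faithfully on a field $L$. Composing $H(G,g_1,g_2,\chi_1,\chi_2)\twoheadrightarrow H\to\End_\kk(L)$ and invoking Theorem~\ref{thm:H(G,theta)intro}, we may write $x_i=w_i(1-g_i)$ with $0\ne w_i\in L$, $g\cdot w_i=\chi_i(g)w_i$, and $g_i\ne1$ (inner faithfulness forces each $x_i$ to act nonzero, hence $w_i\ne0$). Now impose the defining relations of $\mf{B}(V)$: the quantum Serre relations $(\mathrm{ad}_c x_i)^{1-a_{ij}}(x_j)=0$ and the root-vector power relations $x_\alpha^{N_\alpha}=0$ for $\alpha\in\Phi^+$. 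Rewritten in terms of the operators $w_i(1-g_i)$, each such expression has the form ``(multiplication by a nonzero monomial in $w_1,w_2$) composed with an element of $\kk G$ whose coefficients are polynomials in the scalars $\chi_j(g_i)$''; since multiplication by a nonzero element of a field is injective, the relation holds if and only if that element of $\kk G$ acts as zero on $L$, and since $G$ acts faithfully this is equivalent to relations in $G$ together with identities among the $\chi_j(g_i)$. Carrying this out: the power relation for a root $\alpha=a\alpha_1+b\alpha_2$ gives $g_1^{aN_\alpha}g_2^{bN_\alpha}=1$, and the Serre relations give the remaining constraints. When $\chi_2(g_1)=1$ (or $\chi_1(g_2)=1$) these further constraints are consequences of the power relations and one reads off that $G\cong(\Z/N)^2$ with the braiding of $u_q^{\geq0}(\mf g)^{J}$, yielding case (2); when $\chi_2(g_1),\chi_1(g_2)\ne1$ the Serre and higher root-vector relations are rigid enough to force $g_1=g_2$ and $N$ to equal $3,5,7$ according to the Cartan type, and one checks that the finitely many resulting Hopf algebras are exactly those of \cite[Theorem~1.3]{AndSch:quantum}, yielding case (3). (The standing condition \eqref{eq:conditions} is used to exclude the degenerate values of $\chi_i(g_i)$ that would allow extra coincidences among the elements of $G$ appearing above.)

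\emph{Main obstacle.}
The crux is the ``only if'' direction for types B$_2$ and G$_2$: there the root vectors are iterated braided commutators of length up to four, so expressing $x_\alpha^{N_\alpha}$ explicitly as an operator on $L$ and isolating from the resulting $\kk G$-identities precisely the resonance conditions that single out the list of \cite{AndSch:quantum} is a substantial computation. Secondary difficulties are the reduction lemma (extending a faithful group action while preserving inner faithfulness) and, in case (3) of the ``if'' direction, checking that the Andruskiewitsch--Schneider algebras genuinely admit an inner faithful action of the asserted form.
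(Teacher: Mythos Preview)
Your overall architecture is right, but there is a genuine gap in the ``only if'' direction and a factual error about case~(3).

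\textbf{The missing idea.} The paper does \emph{not} use the root-vector power relations $x_\alpha^{n}=0$ to extract constraints. In fact these relations are automatically satisfied: once the $g_i$ lie in a cyclic group generated by some $g$ with $gx_\alpha=\zeta x_\alpha g$ (which is the situation here), the operator corresponding to $x_\alpha^n$ contains the factor $\prod_{i=0}^{n-1}(1-\zeta^i g)=1-g^n=0$, so the associated element $Q_\alpha\in\kk G$ vanishes identically (Lemma~\ref{lem:Qis0}). Thus your ``power relation gives $g_1^{aN_\alpha}g_2^{bN_\alpha}=1$'' is not a usable constraint; the entire burden falls on the Serre relations. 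The key computation you are missing is that a \emph{single} Serre relation---the one with $a_{ij}=-1$---already factors cleanly. For types A$_2$ and G$_2$ one gets
\[
Q_{12}=(1-q_{11}q_{12})(q_{12}-1)(g_1^2g_2-1)=0,
\]
and for type B$_2$ the analogous computation with $a_{21}=-1$ gives $(1-q_{21}q_{22})(q_{21}-1)(g_1g_2^2-1)=0$. Using $q_{ij}q_{ji}=q_{ii}^{a_{ij}}$ this immediately yields the trichotomy $q_{12}=1$, $q_{21}=1$, or $g_1^2g_2=1$ (resp.\ $g_1g_2^2=1$). So the ``substantial computation with iterated braided commutators of length up to four'' that you flag as the main obstacle never arises.

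\textbf{The factual error.} In case~(3) you assert $g_1=g_2$. This is correct only for type A$_2$ (where $n=3$ and $g_1^2g_2=1$ forces $g_2=g_1^{-2}=g_1$). For type B$_2$ the relation is $g_1g_2^2=1$ with $n=5$, so $g_1=g_2^3\ne g_2$; for type G$_2$ it is $g_1^2g_2=1$ with $n=7$, so $g_2=g_1^5\ne g_1$. The group $G'$ is cyclic in all three cases, but $g_1,g_2$ are different generators. Once one has $g_1^2g_2=1$ or $g_1g_2^2=1$ together with $q_{12},q_{21}\ne1$, plugging into the braiding constraints \eqref{eq:qij Cartan} forces $n=3,5,7$ respectively (Proposition~\ref{prop:g1^2g2=1}); this is an elementary root-of-unity argument, not a rigidity argument from higher relations.

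\textbf{Minor points.} The reduction to $H'$ is done more cleanly in the paper via Proposition~\ref{prop:minimal}: since $H$ is coradically graded, the elements $Q_\alpha\in\kk G$ attached to the Nichols relations are literally the same for $H$ and $H'$, so Proposition~\ref{criter} gives the equivalence without any field-extension argument. For the ``if'' direction in case~(3), the verification that the Serre relations hold as operators (i.e.\ that the corresponding $Q_{ij}$ vanish) is a genuine computation; the paper does it by explicit symbolic calculation, and your sketch should acknowledge that this is the nontrivial step there.
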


Due to the complexity of the rank two case, we leave the general study of coradically graded Galois-theoretical Hopf algebras of higher rank to future work.

Next, we discuss the Galois-theoretical properties of liftings $H$ of coradically graded finite-dimensional pointed Hopf algebras 
in the cases when $H$ is of finite Cartan type A$_1^{\times \theta}$ and of rank two. 
To our knowledge, liftings of Nichols algebras of type G$_2$ 
are unclassified, so we do not address this case here.

\begin{theorem} [Theorem~\ref{thm:liftQLS}]  \label{thm:liftA1intro}
Let $H$ be a lifting of bosonization of a Nichols algebra $\mf{B}(V)$ of finite Cartan type as classified in \cite{AndSch:p3} for type A$_1^{\times \theta}$.
Then, $H$ is Galois-theoretical if and only if the Hopf subalgebra of $H$ generated by $\{g_1, \dots, g_{\theta},$ $ x_1, \dots, x_{\theta}\}$ is the quotient by a group of central grouplike elements of a tensor product of:
\begin{itemize}
\item $u_q'({\mathfrak{gl}}_2)$, the Hopf algebras from \cite[Definition~11]{PartI},
\item Taft algebras $T(n,\zeta)$,
\item Nichols Hopf algebras $E(n)$, and 
\item book algebras $\mathbf{h}(\zeta,1)$.
\end{itemize}
\end{theorem}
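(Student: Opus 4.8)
The plan is to leverage the coradically graded result for type A$_1^{\times\theta}$, namely Theorem~\ref{thm:B(G,theta)} (= Theorem~\ref{thm:B(G,theta)intro}), together with the general lifting philosophy: a lifting $H$ of a bosonization $\mathfrak{B}(V)\#\kk G$ and its associated graded $\mathrm{gr}(H)$ have the ``same'' Hopf subalgebra structure combinatorially, but with the relations $x_i^{n_i}=0$ and $[x_i,x_j]=0$ possibly deformed. First I would recall the explicit classification of liftings in type A$_1^{\times\theta}$ from \cite{AndSch:p3}: each such $H$ is generated by $G$ and skew-primitives $x_1,\dots,x_\theta$ with $g_ix_j=\chi_j(g_i)x_jg_i$, where the Cartan condition forces $\chi_i(g_j)\chi_j(g_i)=1$ for $i\neq j$ (the A$_1\times$A$_1$ linking) and $\chi_i(g_i)$ a root of unity of order $n_i$; the liftings deform $x_i^{n_i}=0$ to $x_i^{n_i}=\lambda_i(1-g_i^{n_i})$ and, for linked pairs, $x_ix_j-\chi_j(g_i)x_jx_i=\lambda_{ij}(1-g_ig_j)$. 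The Hopf subalgebra $H'$ generated by the $g_i$ and $x_i$ is then exactly a quotient, by a subgroup of central grouplikes, of a tensor product of the ``building block'' algebras, once one sorts the indices into connected components of the linking graph.

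The key steps would be: (1) Reduce, as in the coradically graded case, to analyzing $H'$ and to the case where the $G$-action on the field $L$ is faithful and the action is inner faithful; invoke Theorem~\ref{thm:H(G,theta)intro} to describe all extensions $x_i\mapsto w_i(1-g_i)$ when the $x_i^{n_i}=0$ relation is undeformed, and handle the deformed relations separately. (2) Show the ``if'' direction: for each building block — $u_q'(\mathfrak{gl}_2)$ (whose Galois-theoretical property is established in \cite{PartI}, since it acts inner faithfully on a field, being essentially a version of $u_q(\mathfrak{sl}_2)$ extended by a grouplike), Taft $T(n,\zeta)$, Nichols $E(n)$, and book $\mathbf{h}(\zeta,1)$ — one has an inner faithful action on a field; a tensor product of Galois-theoretical Hopf algebras acting on a compositum of fields (with independent group actions, achievable by Theorem~\ref{thm:H(G,theta)intro}'s linear-independence clause) is Galois-theoretical; and quotienting by central grouplikes that act trivially preserves the property. (3) Show the ``only if'' direction: if $H$ is Galois-theoretical, then so is its associated graded $\mathrm{gr}(H')$ — here I would argue that an inner faithful $H'$-action on $L$ forces constraints on the $w_i$ and on the linking/power relations; in particular a nontrivial deformation $\lambda_i\neq0$ of $x_i^{n_i}=0$ is compatible with an action on a field only when $n_i$ divides the order of $\chi_i(g_i)$ appropriately (this is precisely how $u_q'(\mathfrak{gl}_2)$ enters, replacing what would otherwise be a forbidden configuration), and a nontrivial linking $\lambda_{ij}\neq0$ between components is incompatible with being a field action for the same reasons that the non-Taft rank-one and non-tensor-product A$_1^{\times\theta}$ cases fail in the graded setting. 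Then apply Theorem~\ref{thm:B(G,theta)intro} to $\mathrm{gr}(H')$ to pin down the component structure, and lift back.

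Concretely, the ``only if'' direction would proceed by: writing $x_i$ as an operator on $L$ via $x_i=w_i(1-g_i)$ plus corrections forced by the deformed relations (when $1-g_i$ is not invertible the formula of Theorem~\ref{thm:H(G,theta)intro} must be modified — this is where $u_q'(\mathfrak{gl}_2)$-type liftings arise, where $g_i^{n_i}=1$ fails and the power relation becomes nontrivial); checking which deformation parameters $\lambda_i,\lambda_{ij}$ are realizable by testing the defining relations as operator identities on $L$ using faithfulness of the $G$-action to separate characters; and showing the surviving possibilities assemble into exactly the listed quotients of tensor products. The ``if'' direction is then a construction: take fields $L_k$ realizing each block inner faithfully with the relevant cyclic group acting by multiplication by roots of unity (extensions of $\kk(t_k)$ by the group), form $L=L_1\otimes_\kk\cdots\otimes_\kk L_m$ (a field, as the extensions are linearly disjoint), and verify inner faithfulness of the tensor-product action, passing to the quotient by central grouplikes acting trivially.

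The main obstacle will be step (3) and specifically the bookkeeping around $u_q'(\mathfrak{gl}_2)$: unlike the coradically graded case, where a Taft algebra is the only rank-one Galois-theoretical building block (Theorem~\ref{thm:rank1intro}), the lifting setting allows the deformation $x^n=\lambda(1-g^n)$ with $g^n\neq1$ to ``rescue'' a configuration, and one must verify that the resulting Hopf algebra is precisely $u_q'(\mathfrak{gl}_2)$ from \cite[Definition~11]{PartI} rather than some other deformation — this requires carefully matching the \cite{AndSch:p3} classification of lifting parameters against the operator-theoretic constraints on a field, and ruling out mixed deformations (simultaneous nontrivial $\lambda_i$ and $\lambda_{ij}$ across what would be distinct components). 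A secondary subtlety is ensuring the quotient ``by a group of central grouplike elements'' is stated correctly: the tensor product of the blocks has too large a grouplike group in general (each block carries its own copy of relevant grouplikes), and the actual $H'$ identifies some of these; one must check this identification is by a central subgroup and is exactly what the field action sees.
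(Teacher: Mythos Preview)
Your proposal has the right overall shape for the ``if'' direction, but the ``only if'' direction contains a genuine misidentification that would make the argument fail.

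The key error is where you locate the origin of $u_q'(\mathfrak{gl}_2)$. You write that a nontrivial deformation $\lambda_i\neq 0$ of the power relation $x_i^{n_i}=0$ is what produces $u_q'(\mathfrak{gl}_2)$, and that a nontrivial linking $\lambda_{ij}\neq 0$ ``is incompatible with being a field action''. This is exactly backwards. In the paper's setup the rank-one Hopf subalgebra generated by $g_i,x_i$ is a generalized Taft algebra, and by \cite[Proposition~21]{PartI} its Galois-theoreticity forces it to be an ordinary Taft algebra, i.e.\ $\alpha_i=0$ always; no power-relation deformation survives. The building block $u_q'(\mathfrak{gl}_2)$ arises instead from a nontrivial \emph{linking} deformation $\lambda_{ij}\neq 0$ with $g_ig_j\neq 1$: applying $x_i\mapsto w_i(1-g_i)$ to the relation $x_ix_j-\chi_j(g_i)x_jx_i=\lambda_{ij}(1-g_ig_j)$ gives $[w_iw_j(1-\chi_j(g_i))-\lambda_{ij}](1-g_ig_j)=0$, so either $g_ig_j=1$ (a ``solid edge'', leading to book algebras or $E(n)$ as in the graded case) or $w_iw_j\in\kk^\times$ is forced, whence $\chi_i\chi_j=1$ and the subalgebra on $g_i,g_j,x_i,x_j$ is (a central quotient of) $u_q'(\mathfrak{gl}_2)$.

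Relatedly, your proposed reduction ``if $H$ is Galois-theoretical, then so is $\mathrm{gr}(H')$'' is false and cannot be used. Indeed, for a dotted edge as above one has $\chi_j(g_i)\neq 1$ and $g_ig_j\neq 1$, so the element $Q_{ij}=(1-\chi_j(g_i))(1-g_ig_j)$ of Proposition~\ref{criter} is nonzero and $\mathrm{gr}(u_q'(\mathfrak{gl}_2))$ is \emph{not} Galois-theoretical; this is consistent with Proposition~\ref{bothcannot}. Your remark that ``$g_i^{n_i}=1$ fails'' in the $u_q'(\mathfrak{gl}_2)$ case is also off: $n_i$ is by definition the order of $g_i$, so $g_i^{n_i}=1$ throughout. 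The paper's argument instead performs a direct graph analysis (solid and dotted edges), bounding the size of connected components via the constraints $\chi_i\chi_j=1$ (dotted) and $g_ig_j=1$ (solid), and then assembles $H'$ as a central quotient of the tensor product over components.
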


\begin{theorem} [Propositions~\ref{prop:liftA2},~\ref{prop:liftB2}]  \label{thm:liftA2B2intro}
Let $H$ be a lifting of bosonization of a Nichols algebra $\mf{B}(V)$ of finite Cartan type as classified  in \cite{AndSch:p4} for type A$_2$, and in \cite{BDR} for type B$_2$. Here, we assume that the braiding parameter is a root of unity of odd order $m >1$, with $m \geq 5$ for type A$_2$, and $m \neq 5$ for type B$_2$. In either case, if $H$ is Galois-theoretical, then $H$ is coradically graded. 
\end{theorem}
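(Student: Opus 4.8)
The plan is to show that for a lifting $H$ of a bosonization of type $\mathrm{A}_2$ or $\mathrm{B}_2$ satisfying the stated hypotheses, any nonzero lifting parameter obstructs the existence of an inner faithful action on a field, so that Galois-theoreticity forces all lifting parameters to vanish, i.e.\ $H$ is coradically graded. First I would recall the explicit presentations of these liftings from \cite{AndSch:p4} and \cite{BDR}: the defining relations are the group relations, the $q$-Serre-type relations among the $x_i$, and the root-vector relations $x_\alpha^m = u_\alpha$ (and, where present, the quadratic relations $x_ix_j - \chi_j(g_i)x_jx_i = \lambda_{ij}(\text{grouplike terms})$), where each $u_\alpha$ and $\lambda_{ij}$ is a lifting parameter lying in the group algebra $\kk G$. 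The key structural input is Theorem~\ref{thm:H(G,theta)intro} (together with Theorem~\ref{thm:rank1intro}): on any field $L$ with a faithful $G$-action, a skew-primitive generator $x_i$ must act as $w_i(1-g_i)$ for a $\chi_i$-eigenvector $w_i\in L$. So I would substitute this ansatz into each lifting relation and read off what constraints the nonvanishing of a parameter imposes.

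The heart of the argument is the computation of the root-vector operators. For a root vector $x_\alpha$ built as an iterated $q$-commutator of the $x_i$, substituting $x_i = w_i(1-g_i)$ and using the eigenvector property $g\cdot w_i = \chi_i(g)w_i$ together with the commutation rule $g_i x_j = \chi_j(g_i)x_j g_i$, one finds that $x_\alpha$ acts as $w_\alpha(1-g_\alpha)$ for an explicit element $w_\alpha \in L$ (a product of the $w_i$ up to scalars, with $g_\alpha = \prod g_i^{c_i}$) — this is the same phenomenon already used in Part~I for $u_q(\mathfrak{sl}_2)$. Hence $x_\alpha^m$ acts as $w_\alpha^m \prod_{k=0}^{m-1}(1 - \zeta^k g_\alpha)$ up to a nonzero scalar, where $\zeta$ is the relevant root of unity; because $\zeta$ has order exactly $m$ and $g_\alpha$ is a grouplike acting as a root of unity of order dividing $m$ on the relevant eigenspace, this product is a \emph{specific} element of $\kk G$ acting on $L$. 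The relation $x_\alpha^m = u_\alpha$ then forces $u_\alpha$ to equal that element, and one checks — this is the crux — that the classification of admissible lifting parameters in \cite{AndSch:p4,BDR} does not allow $u_\alpha$ to be of that form unless $u_\alpha = 0$; the numerical restrictions ($m\geq 5$ for $\mathrm{A}_2$, $m \neq 5$ for $\mathrm{B}_2$, $m$ odd) are exactly what is needed to rule out the coincidental cases. A parallel but easier computation handles any quadratic lifting relations $x_ix_j - \chi_j(g_i)x_jx_i = \lambda_{ij}(\cdots)$: the left-hand side, evaluated on the ansatz, is $w_iw_j$ times an explicit element of $\kk G$, forcing $\lambda_{ij}$ into a form incompatible with its admissible values unless $\lambda_{ij}=0$.

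Carrying this out case by case for each positive root of $\mathrm{A}_2$ (three roots) and $\mathrm{B}_2$ (four roots), and combining with the corresponding list of liftings, yields that every lifting parameter must vanish, so $H = \mathrm{gr}(H)$, which is Proposition~\ref{prop:liftA2}/Proposition~\ref{prop:liftB2}. I would organize the two cases as separate propositions, each opening with the explicit relation list, then the root-vector operator lemma (which can be stated once, uniformly), then the parameter-by-parameter elimination. The main obstacle I anticipate is the last step in the previous paragraph: verifying that the \emph{specific} grouplike element forced by the ansatz is genuinely incompatible with the admissible lifting parameters. This requires a careful reading of the order conditions on the relevant grouplikes and characters in \cite{AndSch:p4} and \cite{BDR}, and is precisely where the hypotheses $m\geq 5$ (type $\mathrm{A}_2$) and $m\neq 5$ (type $\mathrm{B}_2$) enter; for the small excluded orders the product $\prod_{k=0}^{m-1}(1-\zeta^k g_\alpha)$ can accidentally match a nonzero admissible parameter, so these cases must be set aside, as the statement does.
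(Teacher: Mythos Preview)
Your strategy is quite different from the paper's, and it contains a real gap.

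First, the gap: your claim that a root vector $x_\alpha$ acts on $L$ as $w_\alpha(1-g_\alpha)$ is not correct. For instance, in type $\mathrm{A}_2$ with $x_\alpha = x_1x_2 - q_{12}x_2x_1$, substituting $x_i \mapsto w_i(1-g_i)$ gives
\[
w_1w_2\bigl[(1-q_{12}) + (q_{12}q_{21}-1)g_2 + q_{12}(1-q_{21})g_1g_2\bigr],
\]
which is $w_1w_2$ times a degree-$\le 1$ polynomial in $g_1,g_2$ with three nonzero terms, not a constant multiple of $(1-g_1g_2)$. Your subsequent computation of $x_\alpha^m$ as $w_\alpha^m\prod_k(1-\zeta^k g_\alpha)$ therefore does not go through, and the ``parameter-by-parameter elimination'' of the root-vector lifting constants cannot be carried out as you describe.

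Second, the paper avoids this obstacle entirely by a much shorter argument. After getting $\mu_1=\mu_2=0$ from the rank-one subalgebras (as you do), it does \emph{not} touch any of the root-vector lifting relations. Instead it evaluates a single homogeneous Serre relation (which holds in every lifting) under the ansatz $x_i\mapsto w_i(1-g_i)$; this forces $q_{12}=1$, $q_{21}=1$, or $g_1^2g_2=1$ (respectively $g_1g_2^2=1$ in type $\mathrm{B}_2$). By the rank-two classification (Theorem~\ref{thm:ranktwo}), each of these conditions makes $\mathrm{gr}(H)$ itself Galois-theoretical. Then Proposition~\ref{bothcannot} (a nontrivial lifting of a Galois-theoretical coradically graded Hopf algebra can never be Galois-theoretical) finishes: $H$ must equal $\mathrm{gr}(H)$. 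No individual lifting parameter beyond $\mu_i$ is ever examined.

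Finally, your diagnosis of the hypotheses $m\ge 5$ (type $\mathrm{A}_2$) and $m\ne 5$ (type $\mathrm{B}_2$) is off: these are not there to rule out accidental matches in a product $\prod_k(1-\zeta^k g_\alpha)$, but simply because the lifting classifications in \cite{AndSch:p4} and \cite{BDR} are only available under those restrictions; the paper's proof needs to know that $H$ is a quotient of $H(G,\underline{g},\underline{\chi})$, which requires the explicit presentation of $H$.
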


Finally, we address the Galois-theoretical property of twists of small quantum groups.

\begin{theorem}[Propositions~\ref{prop:uq(g)} and~\ref{prop:uq(g)twist}] \label{thm:uq(g)intro} 
Let $\mf{g}$ be a finite-dimensional semisimple Lie algebra with Cartan matrix $(a_{ij})$. Let $q \in \kk$ be a root of unity of odd order $m \geq 3$, with $m>3$ for type G$_2$.  For parts \textnormal{(}d\textnormal{)} and \textnormal{(}e\textnormal{)} below, we also require that $m$ is relatively prime to det$(a_{ij})$, and to 3 in type G$_2$. Let $J$ be a Drinfeld twist coming from the Cartan subgroup of the small quantum group
\textnormal{(}i.e., a Reshetikhin twist\textnormal{)}. Then:
\begin{enumerate}
\item $u_q(\mf{g})$ is Galois-theoretical if and only if $\mf{g} = \mf{sl}_2$.
\item $u_q^{\geq 0}(\mf{g})$ is Galois-theoretical if and only if $\mf{g} = \mf{sl}_2$.
\item gr$(u_q(\mf{g}))$ is not Galois-theoretical.
\item $u_q(\mf{g})^J$ can be Galois-theoretical if and only if $\mf{g} = \mf{sl}_n$, and in this case, there are only two of such twists $J$
for $n \geq 3$, and one \textnormal{(}namely, $J=1$\textnormal{)} for $n=2$.
\item There are precisely $2^{\text{rank}(\mf{g})-1}$ twists $J$ for which $u_q^{\geq 0}(\mf{g})^J$ is Galois-theoretical.
\end{enumerate}
\end{theorem}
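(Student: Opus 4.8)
The plan is to reduce everything to the question of when the quantum Serre relations (and, for the full quantum group, the relations $[E_i,F_j]=0$ for $i\neq j$ and $[E_i,F_i]=\tfrac{K_i-K_i^{-1}}{q_i-q_i^{-1}}$, $q_i:=q^{d_i}$) can be realized by skew-derivations on a field. Suppose $H$ is one of $u_q(\mf g)$, $u_q^{\geq 0}(\mf g)$, $\mathrm{gr}(u_q(\mf g))$, or a Reshetikhin twist thereof, and $H$ acts inner faithfully on a field $L\supseteq\kk$. Each Chevalley-type generator $x$ ($x=E_i$ or $F_i$) is skew-primitive, so the ideal it generates is a Hopf ideal; hence inner faithfulness forces $x$ to act nonzero, forces $G=G(H)$ to act faithfully on $L$, and by Theorem~\ref{thm:H(G,theta)intro} makes $x$ act as $w_x(1-g_x)$ for a nonzero $\chi_x$-eigenvector $w_x\in L$, where $g_x$ is the grouplike for which $x$ is skew-primitive. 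The positive (``if'') directions in (a)--(e) for $\mf g=\mf{sl}_2$ are already available: $u_q(\mf{sl}_2)$ is Galois-theoretical by \cite[Theorem~2]{PartI}, $u_q^{\geq 0}(\mf{sl}_2)$ is a Taft algebra and hence Galois-theoretical by Theorem~\ref{thm:rank1intro}, and the relevant twisted Borels are Galois-theoretical by (or by the same construction as in) Theorem~\ref{thm:ranktwointro}(2). So the work is the obstructions showing the listed $\mf g$ (resp.\ twists $J$) are the only ones, and the counting in (d), (e).

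Next I would dispose of (c) and the ``only if'' parts of (a), (b) by two computations. For $\mathrm{gr}(u_q(\mf g))$, realizing the degenerate Nichols-algebra relation replacing $[E_i,F_i]=\tfrac{K_i-K_i^{-1}}{q_i-q_i^{-1}}$ (namely $[E_i,F_i]_c=0$) gives, after pushing the eigen-scalars to the front, an identity $c\,w_{E_i}w_{F_i}\,(K_i-K_i^{-1})=0$ with $c\in\kk^\times$; since $w_{E_i},w_{F_i}\neq0$ and $K_i^2\neq1$ ($K_i$ has odd order $m\geq3$), this is impossible, so $\mathrm{gr}(u_q(\mf g))$ is never Galois-theoretical, proving (c). For $u_q^{\geq 0}(\mf g)$ (and likewise $u_q(\mf g)$) with $\mf g$ simple of rank $\geq2$, I would compute the image of the quantum Serre element $\mathrm{ad}_c(E_i)^{1-a_{ij}}(E_j)$ for an edge $\{i,j\}$ of the Dynkin diagram under $E_k\mapsto w_k(1-g_k)$: it equals a nonzero scalar times $w_i^{1-a_{ij}}w_j$ times a $\kk$-linear combination of group elements whose extreme coefficient is nonzero for the standard braiding (for type $\mathrm{A}_2$ one gets $\tfrac{(q-1)^2}{q}\,w_1^2w_2\,(g_1^2g_2-1)$), so the relation fails since $G$ is free on the $K_i$'s. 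This gives the ``only if'' in (b), and the same computation inside $u_q(\mf g)$ gives it in (a).

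The core is the twisted case. A Reshetikhin twist of $H$ is governed by an antisymmetric bicharacter $\sigma$ on the character lattice; under it the grouplike partner $g_i$ of $E_i$ moves in a controlled way, while $\chi_i$ is unchanged (it comes from an algebra relation). Since the algebra is unchanged but the coalgebra is twisted, rewriting the (unchanged) defining relations in terms of the \emph{normalized} skew-primitive generators of the twisted Hopf algebra multiplies the Serre element for an edge $\{i,j\}$ by a deformation scalar expressible through $\sigma(\chi_i,\chi_j)$. Rerunning the Serre computation, the obstruction now vanishes exactly when $\sigma(\chi_i,\chi_j)$ lies in a $2$-element set determined by $a_{ij}$ and $d_i$ (for type $\mathrm{A}_2$: $\sigma(\chi_i,\chi_j)\in\{q,q^{-1}\}$), while the relations $[E_i,E_j]=0$ for non-adjacent $i,j$ force $\sigma(\chi_i,\chi_j)=1$ there. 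The hypothesis that $m$ is coprime to $\det(a_{ij})$ (and to $3$ in type $\mathrm{G}_2$) guarantees both that $\chi_1,\dots,\chi_r$ form a $\Z/m$-basis of the relevant lattice — so an antisymmetric bicharacter is freely prescribed by its values on pairs — and that $m>d_i$, so the two special values are genuinely distinct. Since the Dynkin diagram is a tree with $r-1$ edges, there are exactly $2^{r-1}$ twists $J$ satisfying all Serre-type constraints, and one checks (choosing the $w_i$ and a suitable $L$ via Theorem~\ref{thm:H(G,theta)intro}) that each is truly Galois-theoretical; this is (e), specializing to (b) in rank one.

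Finally, for the full quantum group $u_q(\mf g)^J$ one imposes, in addition, the relations $[E_i,F_i]=\tfrac{K_i-K_i^{-1}}{q_i-q_i^{-1}}$ — which, realized on $L$, only pin down the product $w_{E_i}w_{F_i}$ to a nonzero constant and so are \emph{not} an obstruction — and $[E_i,F_j]=0$ for $i\neq j$, whose twisted form imposes a further condition on $\sigma$ (and, in degenerate configurations of the grouplikes, coincidence conditions among products of group elements). I expect the main obstacle to be showing that this enlarged system of conditions on $\sigma$ is simultaneously solvable exactly when the Dynkin diagram is of type $\mathrm{A}$ — where the linear (path) shape of the diagram lets the $E$-side and $F$-side conditions be reconciled — and that then there are exactly two solutions when $n\geq3$ and only $J=1$ when $n=2$; combined with the positive constructions and with (c), this yields (a) and (d). A second delicate point throughout is verifying that, once the relations hold, the resulting $H$-action on $L$ is inner faithful, i.e.\ does not factor through a proper Hopf quotient — which requires a good hold on the lattice of Hopf quotients of these (twisted) small quantum groups.
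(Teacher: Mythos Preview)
Your plan for parts (a), (b), (c), and (e) is correct and matches the paper. Substituting $x_i\mapsto w_i(1-g_i)$ into the quantum Serre relation gives exactly the obstruction you wrote in type $A_2$; the paper carries out the same computation (with the cases $d=1,2,3$) and concludes from $K_i^2K_j\neq 1$ in $G\cong(\Z/m)^r$. For (c) you use the degenerate relation $[E_i,F_i]_c=0$ rather than a Serre relation; this is a harmless variant (it handles rank one without a separate citation, whereas the paper invokes \cite[Proposition~25(1)]{PartI} for $\mathrm{gr}(u_q(\mf{sl}_2))$). For (e), your binary choice at each edge is exactly the paper's orientation argument, with the positive direction supplied by \cite[Proposition~37]{PartI} and the upper bound by Theorem~\ref{thm:ranktwo}.

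The genuine gap is part (d). You correctly note that $[E_i,F_i]=\tfrac{K_i-K_i^{-1}}{q_i-q_i^{-1}}$ merely fixes $w_{E_i}w_{F_i}$ and that the content lies in $[E_i,F_j]=0$ for $i\neq j$, but ``I expect the enlarged system of conditions on $\sigma$ is solvable exactly in type $A$'' is not a proof, and you do not identify the mechanism that singles out type $A$. The paper's argument is more structural. For each ordered pair $i\neq j$, the elements $e_i,f_j$ together with the Cartan generate a coradically graded Hopf subalgebra of type $A_1\times A_1$, whose minimal Hopf subalgebra $H_{ij}$ has braiding matrix
\[
\begin{pmatrix} q^{2d_i} & q^{-d_ia_{ij}}b_{ij}\\ q^{d_ja_{ji}}b_{ij}^{-1} & q^{-2d_j}\end{pmatrix},\qquad b_{ij}=b_J(K_i,K_j).
\]
By Theorem~\ref{thm:B(G,theta)}, $H_{ij}$ is either a tensor of two Taft algebras or a book algebra $\mathbf{h}(\zeta,1)$. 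A direct check shows that when $i$---$j$ is an edge, $H_{ij}$ and $H_{ji}$ cannot both be of the same kind, so exactly one is a book algebra; this defines an orientation. The book-algebra condition $q^{2d_i}=q^{-2d_ia_{ij}}$ then forces $a_{ij}=a_{ji}=-1$, so $\mf g$ is simply laced. Finally, (the proof of) Sublemma~\ref{sublem:QLS2} forbids two book-algebra edges meeting at a vertex with the same direction, which rules out triple points; hence the diagram is a path, i.e.\ type $A_{n-1}$, and the two orientations give the two twists (one when $n=2$). Replace your speculative system of constraints on $\sigma$ by this $A_1\times A_1$ subalgebra analysis.

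Your last worry about inner faithfulness is overstated: you do not need the lattice of Hopf quotients. The positive directions come from explicit constructions in \cite{PartI}, and in the coradically graded setting Proposition~\ref{criter} reduces inner faithfulness to the vanishing of the group-algebra elements $Q_\alpha$ together with linear independence of the $w_i$, which is easy to arrange.
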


In the theorem above, twists are counted up to gauge transformations, as gauge equivalent twists produce isomorphic Hopf algebras. 
\medskip

The paper is organized as follows. Background material on Hopf algebra actions and pointed Hopf algebras of finite Cartan type is provided in Section~\ref{sec:background}. Preliminary results on the Galois-theoretical property of finite-dimensional pointed Hopf algebras of finite Cartan type are provided in Section~\ref{sec:pointed}. 
Here, we prove Theorem~\ref{thm:H(G,theta)intro} and define  {\it minimal} Hopf algebras which will be used throughout this work. 
Next, in Section~\ref{sec:coradA1r}, we study the Galois-theoretical property in the type A$_1^{\times \theta}$ case, which includes the rank one case; we establish Theorems~\ref{thm:rank1intro} and~\ref{thm:B(G,theta)intro} here. In Section~\ref{sec:rank2}, we restrict our attention to the coradically graded case and study Galois-theoretical $H$ of rank two; namely, we prove Theorem~\ref{thm:ranktwointro}.
Then, in Section~\ref{sec:noncorad}, we determine when liftings of bosonizations of certain Nichols algebras of finite Cartan type are Galois-theoretical; we verify Theorems~\ref{thm:liftA1intro} and~\ref{thm:liftA2B2intro} here. Finally, in Section~\ref{sec:smallqgroups}, we prove Theorem~\ref{thm:uq(g)intro} on the Galois-theoretical property of twists of small quantum groups. Suggestions for further directions of this work are presented in Section~\ref{sec:directions}.

%%%%%%%%%%%%%%%%%%%%%%%%%%%%%%%%%%%%%%%
%%%%%%%%%%%%%%%%%%%%%%%%%%%%%%%%%%%%%%%
%%%%%%%%%%%%%%%%%%%%%%%%%%%%%%%%%%%%%%%

\section{Background material} \label{sec:background}

In this section, we provide a background discussion of Hopf algebra actions and pointed Hopf algebras of finite Cartan type. We set the following notation for the rest of the article. Unless specified otherwise: 
\medskip

\hspace{.1in} $\kk$ =  an algebraically closed base field of characteristic zero; all unadorned tensor products are over $\kk$\\
\indent $\zeta$, $q$  = a primitive root of unity in $\kk$ of order $n$ and $m$, respectively\\
\indent \hspace{.06in} $H$ = a  finite-dimensional Hopf algebra with coproduct $\Delta$, counit $\varepsilon$, antipode $S$\\
\indent \hspace{.06in} $G$ = the group of grouplike elements $G(H)$ of $H$\\
\indent \hspace{.06in} $\widehat{G}$ = character group of $G$ = $\{\alpha: G \rightarrow \kk^{\times}\}$\\
\indent \hspace{-.05in} $I_{g,\chi}$ = the index set $\{ i ~|~ g_i = g,~\chi_i=\chi \}$ for given elements $g \in G$, $\chi \in \widehat{G}$\\
\indent \hspace{.01in} $q_{ij}$ = the scalar $\chi_j(g_i)$ for $\chi_j \in \widehat{G}$ and $g_i \in G$\\
\indent \hspace{.03in} $n_i$ = the order of $q_{ii}$\\
\indent \hspace{-.12in} $(a_{ij})$ =  the Cartan matrix associated to a finite-dimensional semisimple Lie algebra\\
\indent \hspace{.06in} $L$ = an $H$-module field containing  $\kk$\\
\indent \hspace{.06in} $F$ = the subfield of invariants $L^H$\\
\indent \hspace{.09in}$E$ = an intermediate field of the extension $L^H \subset L$\\
\indent \hspace{-.1in}${}^G_G \mc{YD}$ = the category of Yetter-Drinfeld modules over $\kk G$\\
\indent \hspace{-.15in} $(V,c)$ = finite-dimensional braided vector space in ${}^G_G\mc{YD}$ with basis $x_1, \dots, x_{\theta}$
\smallskip

\noindent Since $G$ is abelian by Hypothesis~\ref{hyp:intro}, we have that $V$ is just a $G$-graded $G$-module.

\subsection{Hopf algebra actions} We recall basic facts about Hopf algebra actions; refer to
\cite{Montgomery} for further details.  
A left $H$-module $M$ has a left $H$-action structure map denoted by $\cdot : H \otimes M \rightarrow M$.

\begin{definition} \label{def:Hopfact} Given a Hopf algebra $H$ and an algebra $A$, we say
that {\it $H$ acts on $A$} (from the left) if $A$ is a left $H$-module,
$h \cdot (ab) = \sum (h_1 \cdot a)(h_2 \cdot b)$,
and $h \cdot 1_A = \varepsilon(h)
1_A$ for all $h \in H$,  $a,b \in A$. Here, $\Delta(h) =
\sum h_1 \otimes h_2$ (Sweedler notation). In the case that $H$ acts on a field $L$, we refer to $L$ as an {\it $H$-module field}.
\end{definition}

We restrict ourselves to $H$-actions that do not factor
through `smaller' Hopf algebras.

\begin{definition} \label{def:infaith} Given a left $H$-module $M$, we say that $M$ is an {\it
inner faithful} $H$-module if $IM\neq 0$ for every nonzero Hopf ideal
$I$ of $H$. Given an action of a Hopf algebra $H$ on an algebra $A$, we say that this
 action is  {\it inner faithful} if the left $H$-module (algebra) $A$ is inner
faithful.
\end{definition}

When given an $H$-action on an algebra $A$, one can always pass uniquely to an inner faithful $\overline{H}$-action on $A$, where $\overline{H}$ is some quotient Hopf algebra of $H$. 

Next, we  consider elements of a field  $L$  invariant under the $H$-action on $L$.

\begin{definition}
Let $H$ be a Hopf algebra that acts on a field $L$ containing $\kk$ from the left. The {\it subfield of invariants} for this action is given by
$$L^H = \{ \ell \in L ~|~ h \cdot \ell  = \varepsilon(h) \ell~~ \text{ for all } h \in H\}.$$
\end{definition}

\subsection{Pointed Hopf algebras of finite Cartan type} \label{sec:finiteCartan}
Let us begin with a discussion of pointed Hopf algebras. A nonzero element $g \in H$ is {\it grouplike} if $\Delta(g) = g \otimes g$, and the group of grouplike elements of $H$ is denoted by $G=G(H)$. An element $x \in H$ is 
{\it $(g,g')$-skew-primitive}, if for grouplike elements $g, g'$ of $G(H)$, we have that $\Delta(x) = g \otimes x + x \otimes g'$. The space of such elements is denoted by $P_{g,g'}(H)$. The {\it coradical} $H_0$ of  a Hopf algebra $H$ is the sum of all simple subcoalgebras of $H$. The {\it coradical filtration} $\{H_n\}_{n \geq 0}$ of $H$ is defined inductively by 
$$H_n = \Delta^{-1}(H \otimes H_{n-1} + H_0 \otimes H),  \quad \text{for } n \geq 1$$
where $H= \bigcup_{n \geq 0} H_n$.
We say that a Hopf algebra $H$ is {\it pointed}  if all of its simple $H$-comodules (or equivalently, if all of its simple $H$-subcoalgebras) are 1-dimensional. When $H$ is pointed, we have that $H_0 = \kk  G$ and $H_1=\kk G + \sum_{g,g'\in G}P_{g,g'}(H)$. 
Following \cite{KropRadford}, we measure the complexity of a pointed Hopf algebra by considering its {\it rank}.

\begin{definition} \label{def:rank}
Let $H$ be a  Hopf algebra  with coradical filtration $\{H_n\}_{n \geq 0}$, where the coradical $H_0$ is a Hopf subalgebra and $H$ is generated by $H_1$ as an algebra. The {\it rank} of $H$ is $\theta$ if $\dim (\kk \otimes_{H_0} H_1) = \theta+1$.
\end{definition}

To study finite-dimensional pointed Hopf algebras $H$, it is convenient to assume that $G=G(H)$ is abelian (as we have done in Hypothesis~\ref{hyp:intro}) since we have the following result.

\begin{proposition} \label{prop:Angiono} \cite{Angiono}
Let $H$ be a finite-dimensional pointed Hopf algebra with an abelian group of grouplike elements $G$. Then, 
the associated coradically graded Hopf algebra $gr(H)$ is  isomorphic to a bosonization $\mf{B}(V) \# \kk G$ of a Nichols algebra $\mf{B}(V)$ by the group algebra $\kk G$. In other words, $H$ is generated by grouplike  and skew-primitive elements.
\end{proposition}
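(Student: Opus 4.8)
\emph{Idea of proof (plan).} The plan is to reduce the statement to a question about Nichols algebras of diagonal type and then to appeal to the classification of the finite-dimensional ones. First I would pass to the associated graded Hopf algebra $\mathrm{gr}(H)$ for the coradical filtration: the assertion that $H$ is generated by grouplike and skew-primitive elements is equivalent to the assertion that $\mathrm{gr}(H)$ is generated in filtration degrees $0$ and $1$, so it suffices to treat $\mathrm{gr}(H)$. Since $H$ is finite-dimensional, $G = G(H)$ is finite; as $G$ is abelian and $\mathrm{char}\,\kk = 0$, the coradical $\mathrm{gr}(H)_0 = \kk G$ is a (semisimple, cosemisimple) Hopf subalgebra, and in the coradically graded algebra $\mathrm{gr}(H)$ the inclusion $\kk G \hookrightarrow \mathrm{gr}(H)$ admits the projection onto the degree-zero component as a Hopf-algebra retraction. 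By the Radford--Majid bosonization theory for Hopf algebras with a projection, $\mathrm{gr}(H) \cong R \,\#\, \kk G$, where $R = \mathrm{gr}(H)^{\mathrm{co}\,\pi}$ is a connected $\N$-graded braided Hopf algebra in ${}^G_G\mc{YD}$. Writing $R = \bigoplus_{n \ge 0} R(n)$ with $R(0) = \kk$ and $R(1) = V$ the infinitesimal braiding --- a $G$-graded $G$-module, hence a braided vector space of diagonal type since $G$ is abelian --- the desired conclusion becomes: $R$ is generated as an algebra by $R(1) = V$.

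Second I would identify the subalgebra $R' \subseteq R$ generated by $V$. Because $\mathrm{gr}(H)$, and hence $R$, is coradically graded, $R$ is connected, every degree-$1$ element of $R$ is primitive, and every primitive of $R$ lies in degree $1$; thus $P(R) = R(1) = V$. Consequently $R'$ is a connected $\N$-graded braided Hopf algebra that is generated in degree $1$ and satisfies $P(R') = R'(1) = V$, which is exactly the defining characterization of the Nichols algebra, so $R' \cong \mf{B}(V)$. Moreover $\mf{B}(V) = R' \subseteq R$ is finite-dimensional because $\dim_\kk H < \infty$. It therefore remains only to prove that $R' = R$.

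This last step is the crux, and I expect it to be by far the hardest part: it amounts to showing that a finite-dimensional connected graded braided Hopf algebra $R$ in ${}^G_G\mc{YD}$ with $R(1) = V$ cannot properly contain the Nichols algebra $\mf{B}(V)$ --- equivalently, that $\mf{B}(V)$ admits no proper finite-dimensional ``post-Nichols'' extension generated in degree $1$ by $V$. Since $\mf{B}(V)$ is finite-dimensional of diagonal type, Heckenberger's classification applies, so $V$ has a finite (arithmetic) root system and $\mf{B}(V)$ carries a finite PBW basis and an explicit presentation by generators and relations (Angiono). The morally correct argument is then by contradiction on the minimal degree $N \ge 2$ with $R(N) \supsetneq R'(N)$: a homogeneous $z \in R(N) \setminus R'(N)$ is primitive modulo $R'$, and a Weyl-groupoid / root-system analysis of the braided vector space it generates together with $V$ forces an enlarged (pre- or post-) Nichols algebra that is infinite-dimensional, or that violates a rank bound coming from the classification, contradicting $\dim_\kk R < \infty$. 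Carrying this out rigorously is precisely the content of \cite{Angiono} (building on Heckenberger's classification; in the special case where $\mf{B}(V)$ is of finite Cartan type with the primes dividing $|G|$ suitably large it is the lifting-method argument of Andruskiewitsch--Schneider \cite{AndSch:pointed}), and it is not accessible by elementary Hopf-algebraic manipulations alone. Once $R' = R$ is established, $\mathrm{gr}(H) \cong \mf{B}(V) \,\#\, \kk G$ and the translation back to the statement that $H$ is generated by grouplikes and skew-primitives is routine.
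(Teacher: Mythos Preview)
Your proposal is correct and takes essentially the same approach as the paper: both defer the crucial step to Angiono's theorem, and the paper's proof is in fact nothing more than a one-line citation of \cite[Theorem~4.15]{Angiono} together with the remark that this verifies the Andruskiewitsch--Schneider generation-in-degree-one conjecture. Your write-up simply unpacks the standard bosonization/Radford--Majid reduction that makes Angiono's result applicable, which the paper leaves implicit.
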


\begin{proof} 
This follows from \cite[Theorem~4.15]{Angiono}. Namely, such an $H$ satisfies the Andruskiewitsch-Schneider conjecture \cite[Conjecture~1.4]{AndSch:finite}: it is generated in degree one (by grouplike and skew-primitive elements). 
\end{proof}

We refer the reader to \cite[Section~2]{AndSch:survey} for a review of Nichols algebras associated to braided vector spaces.

Now we consider a special subclass of the finite-dimensional pointed Hopf algebras, those of {\it finite Cartan type}. Refer to \cite[Section~1.2]{AndSch:survey} and \cite{AndSch:pointed} for further details.

\begin{definition}{} \label{def:V,c} 
Let $(V,c)$ be a finite-dimensional braided vector space. 
\begin{enumerate}
\item $(V,c)$ is of {\it diagonal type} if there exists a basis $x_1, \dots, x_{\theta}$ of $V$ and scalars $q_{ij}\in \kk^\times$ so that
$$c(x_i \otimes x_j) = q_{ij} x_j \otimes x_i$$
for all $1 \leq i,j \leq \theta$. The matrix $(q_{ij})$ is called the {\it braiding matrix}.
\item $(V,c)$ is of {\it finite Cartan type} if it is of diagonal type and 
\begin{equation} \label{eq:qij Cartan}
q_{ii} \neq 1 \quad \text{ and } \quad q_{ij}q_{ji} = q_{ii}^{a_{ij}}
\end{equation}
where $(a_{ij})_{1 \leq i,j \leq \theta}$ is a Cartan matrix associated to a finite-dimensional semisimple Lie algebra.
\item The same terminology applies to the Nichols algebra $\mf{B}(V)$ and a Hopf algebra $H$ when $gr(H) \cong \mf{B}(V) \#\kk G$. In this case, $H$ is a {\it lifting} of a finite-dimensional pointed Hopf algebra of finite Cartan type, and further,  {\it trivial lifting} when $H$ is coradically graded.
\end{enumerate}
\end{definition}

Note that since $G(H)$ is abelian here, the corresponding braided vector space $(V,c)$ is automatically of diagonal type;
indeed, it suffices to choose $x_i$ to be eigenvectors of the $G$-action. 
See \cite[Theorem~2]{PartI} for examples of finite-dimensional pointed Hopf algebras of finite Cartan type.

Motivated by Proposition~\ref{prop:Angiono},  we now give the precise presentation of finite-dimensional pointed Hopf algebras $H$ where the order of $G(H)$ has large prime divisors; $H$ is of finite Cartan type in this case.
\medskip

\noindent {\it Notation} [$\Phi^+$, $\alpha$, $n_I$]. In the case where $V$ is of finite Cartan type, let $\Phi$ be the root system of the Cartan matrix $(a_{ij})_{1 \leq i,j, \leq \theta}$, and let $\Phi^+$ be the subset of positive roots.  Let $\alpha_1, \dots, \alpha_{\theta}$ be the simple roots. We write $i \sim j$ if the roots $\alpha_i$ and $\alpha_j$ are in the same connected component of the Dynkin diagram of $\Phi$. Let $\mc{X}$ be the set of such connected components. Assume that
\begin{equation} \label{eq:conditions}
\text{$q_{ii}$  has odd order, and is prime to 3 if $i$ lies in a component G$_2$.}
\end{equation}
The order of $q_{ii}$ and of $q_{jj}$ are equal when $i \sim j$. So, we set $n_I$ to be the order of any $q_{ii}$ with $\alpha_i \in I$ of $\mc{X}$.
\medskip

\begin{theorem} \label{thm:finiteCartan}
\textnormal{(a)}  \cite[Theorem~0.1(2)]{AndSch:pointed} Let $H$ be a finite-dimensional pointed Hopf algebra, where $G$ is abelian so that the prime divisors of $|G|$ are $>7$. Then, $H$ is of finite Cartan type.   

\textnormal{(b)}  \cite[Theorem~4.5]{AndSch:finite} Let $H$ be a finite-dimensional pointed Hopf algebra of finite Cartan type, with $G$  abelian.  Let $(q_{ij} := \chi_j(g_i))$ be the braiding matrix of $H$ and assume \eqref{eq:conditions}.
Then, $H$ is generated by $G$ and by $(g_i, 1)$-skew-primitive elements $x_i$ for $g_i \in G$ and $i = 1, \dots, \theta$. Further, $gr(H) \cong \mf{B}(V) \# \kk G$ is subject to the relations of $G$ along with the following relations:

\begin{center}
\begin{tabular}{rl}
$g x_i = \chi_i(g)x_i g$ & for all $i$, and all $g \in G$\\
ad$_c(x_i)^{1-a_{ij}}(x_j) = 0$ & for all $i \neq j$\\
$x_{\alpha}^{n_I} = 0$ & for all $\alpha \in \Phi_I^+$, $I \in \mc{X}$.
\end{tabular}
\end{center}
\noindent Here, $(\text{ad}_c x_i)(y) = x_i y - (q_{ij_1} \cdots q_{ij_t}) y x_i$
for $y = x_{j_1} \cdots x_{j_t}$. 
\qed
\end{theorem}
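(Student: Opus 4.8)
Both parts are drawn from the Andruskiewitsch--Schneider program, so the plan is to indicate how each is established rather than to reprove an entire classification. For part (a), start from Proposition~\ref{prop:Angiono}: $gr(H)$ contains the bosonization $\mf{B}(V)\#\kk G$ of a Nichols algebra, and since $G$ is abelian one may take the basis $x_1,\dots,x_\theta$ of $V$ to consist of $G$-eigenvectors, so $(V,c)$ is of diagonal type with braiding matrix $(q_{ij})$, $q_{ij}=\chi_j(g_i)$. The content is that $\dim\mf{B}(V)<\infty$ is extremely restrictive on $(q_{ij})$. I would analyze the rank-two sub-braidings spanned by pairs $x_i,x_j$: finiteness of the corresponding Nichols subalgebras forces each sub-braiding into a short explicit list, and one checks that unless $q_{ii}\ne 1$ with $q_{ij}q_{ji}=q_{ii}^{a_{ij}}$ for a Cartan integer $a_{ij}$ (i.e.\ Definition~\ref{def:V,c}(b)), the orders of the $q_{ii}$---hence roots of unity whose order divides $|G|$---are forced to take small values ($2,3,5,7$ and their powers). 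Since the prime divisors of $|G|$ are all $>7$, these sporadic configurations are excluded, leaving only finite Cartan type.

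For part (b), assume $H$ is already of finite Cartan type and that \eqref{eq:conditions} holds. By Proposition~\ref{prop:Angiono}, $H$ is generated by $G$ and the skew-primitives $x_i$, so its relations (and those of $gr(H)$) are pinned down once one knows the defining relations of $\mf{B}(V)$. The key input is that, under \eqref{eq:conditions}, $\mf{B}(V)$ is isomorphic to the positive part of Lusztig's small quantum group: it carries a PBW basis indexed by $\Phi^+$ built from the braided-commutator root vectors $x_\alpha$, and is presented by exactly the linear relations $gx_i=\chi_i(g)x_ig$, the quantum Serre relations $\mathrm{ad}_c(x_i)^{1-a_{ij}}(x_j)=0$, and the root-vector relations $x_\alpha^{n_I}=0$. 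I would prove this by showing the algebra defined by these relations already has the expected finite dimension $\prod_{I\in\mc{X}}n_I^{|\Phi_I^+|}$ and surjects onto $\mf{B}(V)$, while a PBW/convex-order argument shows $\mf{B}(V)$ has at least that dimension; the hypotheses that $n_I$ be odd and prime to $3$ in type $G_2$ are precisely what make the root vectors behave as in the generic quantum group, so that no further relations are needed. Adjoining the relations of $G$ and bosonizing by $\kk G$ then yields the stated presentation of $gr(H)$.

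The main obstacle is different in the two parts. In (a) it is the classification of finite-dimensional Nichols algebras of diagonal type---controlling every exceptional braiding and tracing it back to a small prime---which in hindsight is subsumed by Heckenberger's list but is genuinely hard. In (b) it is establishing the PBW basis and the completeness of the listed relations for $\mf{B}(V)$, i.e.\ a root-of-unity analogue of Lusztig's construction in which the arithmetic conditions on $n_I$ play an essential role; ruling out extra relations on the root vectors is the delicate point.
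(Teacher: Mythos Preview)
The paper does not supply a proof of this theorem at all: both parts are stated as citations of results from the literature (Andruskiewitsch--Schneider \cite{AndSch:pointed} and \cite{AndSch:finite}), and the statement closes with a bare \qed. So there is no ``paper's own proof'' to compare against; the theorem functions here purely as background.

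Your proposal is a reasonable high-level sketch of how those cited papers actually establish the results, and in that sense it is more than the present paper attempts. A couple of calibrations: for (a), the actual argument in \cite{AndSch:pointed} predates Heckenberger's full classification and proceeds by a careful case analysis of the rank-two subdiagrams together with the constraint that the orders of the $q_{ii}$ divide $|G|$; your description captures the shape but understates how much of the work is a direct elimination of non-Cartan braidings under the prime hypothesis rather than an appeal to a pre-existing list. For (b), your outline is accurate---the identification of $\mf{B}(V)$ with the Frobenius--Lusztig kernel under \eqref{eq:conditions}, the PBW basis in the root vectors $x_\alpha$, and the dimension count $\prod_I n_I^{|\Phi_I^+|}$ are exactly the ingredients---though the ``surjects onto $\mf{B}(V)$ and has the same dimension'' step hides nontrivial work (degenerating from the generic $q$ case and controlling the kernel), which is where the oddness and coprimality hypotheses in \eqref{eq:conditions} are genuinely used.

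In short: your proposal is correct in spirit and goes well beyond what the paper does, since the paper simply quotes the result.
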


%%%%%%%%%%%%%%%%%%%%%%%%%%%%%%%%%%%%%%%
%%%%%%%%%%%%%%%%%%%%%%%%%%%%%%%%%%%%%%%
%%%%%%%%%%%%%%%%%%%%%%%%%%%%%%%%%%%%%%%

\section{Preliminary results on pointed Galois-theoretical Hopf algebras} \label{sec:pointed}

We begin this section by examining actions of Hopf algebras $H(G,g_1,\dots,g_\theta,\chi_1,\dots,\chi_\theta)$, which are infinite-dimensional when $\theta \geq 2$. We also define and discuss  {\it minimal} Hopf algebras, which will be used throughout the rest of the paper.

%%%%%%%%%%%%%%%%%%%%%%%%%%%%%%%%%%%%%

\subsection{On the Hopf algebras $H(G,g_1,\dots,g_\theta,\chi_1,\dots,\chi_\theta)$} \label{subsec:H(G,theta)}
We consider the actions of the Hopf algebra $H(G,g_1,\dots,g_\theta,\chi_1,\dots,\chi_\theta)$ defined below.

\begin{definition} \label{def:H(G,theta)} Let $G$ be a finite abelian group. Let $g_i\in G$ be an element of order $n_i\ge 2$, for $i=1,\dots,\theta$.
Fix characters $\chi_i: G \rightarrow \kk^{\times}$, for $i = 1, \dots, \theta$, so that $q_{ii} := \chi_i(g_i)$ has order $n_i$. 
Let $\ug:=(g_1,\dots,g_\theta)$ and $\uchi:=(\chi_1,\dots,\chi_\theta)$. The Hopf algebra 
$H(G, \ug, \uchi)$
 is generated by  $G$ and $(g_i,1)$-skew-primitive elements $x_i$, for $i=1, \dots, \theta$, subject to the relations of $G$, 
$$x_i^{n_i} = 0  \quad \text{ and } \quad gx_i = \chi_i(g) x_i g,$$
for all $g \in G$.
\end{definition}

Thus, $H(G, \ug, \uchi)$ is a quotient of the bosonization of  $\kk\langle x_1, \dots, x_{\theta} \rangle$ by $\kk G$, by the Hopf ideal of relations $(x_i^{n_i})_{i=1}^{\theta}$. If $\theta \geq 2$, then $H(G, \ug, \uchi)$ is infinite-dimensional.
For a given $i$, the subalgebra of $H(G, \ug, \uchi)$ generated by $\{g_i, x_i\}$ is a Taft algebra.

\begin{proposition}\label{anypoi} Let $H$ be a finite-dimensional pointed Hopf algebra of rank $\theta$ 
with $G=G(H)$ abelian, generated by $G$ and $(g_i,1)$-skew-primitive elements 
$x_i$ for $i=1, \dots, \theta$. Then, any inner faithful action of $H$ on a field $L\supset \kk$ descends 
from an action of $H(G, \ug, \uchi)$ on $L$ \textnormal{(}via a surjective Hopf algebra homomorphism $H(G,\ug,\uchi)\to H$\textnormal{)}
so that,  for any $g,\chi$, $\lbrace{x_i~|~i\in I_{g,\chi}\rbrace}$ are linearly independent 
as $\kk$-linear operators on $L$. 
\end{proposition}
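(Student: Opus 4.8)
The plan is to reduce the statement, one skew-primitive generator at a time, to an understanding of how a single Taft-type generator acts on a field, and then to invoke repeatedly the principle that a skew-primitive (or a grouplike-minus-grouplike) element acting as zero on $L$ generates a Hopf ideal annihilating $L$, which contradicts inner faithfulness. First, restricting the action to $\kk G$ gives a $G$-action on $L$; if $1\neq n\in G$ acted trivially, then $n-1$ would generate a nonzero Hopf ideal killing $L$, so inner faithfulness forces $G$ to act faithfully, and each $g_i$ acts as an automorphism $\sigma_i$ of $L$ of order $\mathrm{ord}_G(g_i)$, a multiple of $n_i:=\mathrm{ord}(q_{ii})$. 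Let $D_i$ be the operator $a\mapsto x_i\cdot a$ on $L$; it is a $g_i$-twisted skew-derivation, $D_i(ab)=(\sigma_i a)D_i(b)+D_i(a)b$, it kills $1$, and it is $\chi_i$-equivariant. Note $q_{ii}\neq 1$ (otherwise the powers $x_i^k$ would form an infinite linearly independent family in the finite-dimensional $H$), $D_i\neq 0$ (otherwise $x_i$, which is nonzero in $H$ by the rank hypothesis, would generate a nonzero Hopf ideal killing $L$), and $D_i$ is nilpotent (a $(g_i,1)$-skew-primitive with $q_{ii}\neq 1$ in a finite-dimensional characteristic-zero Hopf algebra is nilpotent).

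The crux is the next step: each $x_i$ acts as $w_i(1-g_i)$ for some $w_i\in L$ with $g_i\cdot w_i=q_{ii}w_i$, and $\sigma_i$ has order exactly $n_i$. Since $q_{ii}\neq 1$ we have $\sigma_i\neq\mathrm{id}$ (else $D_i$ is a genuine derivation satisfying $D_i=q_{ii}D_i$, forcing $D_i=0$), hence $L/L^{\sigma_i}$ is cyclic Galois of degree $d_i\geq 2$; fix a Kummer generator $u$ with $\sigma_i u=\omega u$, $\omega$ a primitive $d_i$th root of unity, $u^{d_i}\in L^{\sigma_i}$. A twisted skew-derivation is determined by $D_i(u)$ together with $D_i|_{L^{\sigma_i}}$, the latter being, up to a fixed power of $u$, an ordinary $\kk$-derivation $\partial$ of the field $L^{\sigma_i}$. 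The key claim is $\partial=0$: restricting to the $D_i$-stable subfield $M=L^{\langle\sigma_i^{n_i}\rangle}$ and computing, one finds that $D_i^{n_i}$, restricted to the subfield $L^{\sigma_i}\subseteq M$ (which it preserves), equals a nonzero scalar times a differential operator $\Phi$ over $L^{\sigma_i}$ whose leading term in $\partial$ is a nonzero multiple of $\partial^{n_i}$; since a characteristic-zero field has no nonzero nilpotent derivation, $\partial^{n_i}\neq 0$ and the ring of differential operators over $L^{\sigma_i}$ is a domain, so $\Phi$ is non-nilpotent unless $\partial=0$ --- whereas $\Phi$ must be nilpotent because $D_i$ is. Granting $D_i|_{L^{\sigma_i}}=0$, a short computation yields $D_i=w_i(1-\sigma_i)$ with $w_i:=D_i(u)/(u-\sigma_i u)\neq 0$, and $\chi_i$-equivariance gives $g_i\cdot w_i=q_{ii}w_i$; finally $w(1-\sigma)$ is nilpotent precisely when $\mathrm{ord}(\sigma)=\mathrm{ord}(q_{ii})$, so $d_i=n_i$. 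I expect this step to be the main obstacle; as it is essentially the structure of Taft-algebra actions on fields, it may also be extractable from \cite{PartI}.

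With Step 2 in hand, the ``descends from $H(G,\ug,\uchi)$'' claim follows quickly. Since $g_i^{n_i}=\sigma_i^{n_i}$ acts trivially on $L$, the element $g_i^{n_i}-1$ generates a Hopf ideal killing $L$, so by inner faithfulness $g_i^{n_i}=1$ in $G$; thus $\mathrm{ord}_G(g_i)=n_i$ and $H(G,\ug,\uchi)$ is defined. Moreover $(w_i(1-g_i))^{n_i}=0$ because $\sigma_i$ has order $n_i$, so $x_i^{n_i}$ annihilates $L$; as $x_i^{n_i}$ is $(g_i^{n_i},1)$-skew-primitive, i.e.\ now primitive, it generates a Hopf ideal killing $L$, whence $x_i^{n_i}=0$ in $H$. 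Therefore all defining relations of $H(G,\ug,\uchi)$ hold in $H$, giving a surjective Hopf algebra homomorphism $H(G,\ug,\uchi)\twoheadrightarrow H$ with $g\mapsto g$ and $x_i\mapsto x_i$, through which the given $H$-action on $L$ factors.

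Finally, for the linear independence assertion: suppose $\sum_{i\in I_{g,\chi}}c_i x_i$ acts as $0$ on $L$ with not all $c_i$ zero. By the rank hypothesis the $x_i$ are linearly independent modulo $\kk G$, so $y:=\sum_{i\in I_{g,\chi}}c_i x_i$ is nonzero in $H$, and it is $(g,1)$-skew-primitive since every $g_i$ with $i\in I_{g,\chi}$ equals $g$; hence $HyH$ is a nonzero Hopf ideal. But $HyH\cdot L=H\cdot(y\cdot L)=0$, contradicting inner faithfulness. Thus $\{x_i\mid i\in I_{g,\chi}\}$ is linearly independent as a set of $\kk$-linear operators on $L$, completing the proof.
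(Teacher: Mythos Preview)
Your overall architecture is sound --- reduce to the rank-one Hopf subalgebras $H_i=\langle g_i,x_i\rangle$, show each must be an ordinary Taft algebra, assemble the surjection $H(G,\ug,\uchi)\twoheadrightarrow H$, and handle linear independence via the Hopf-ideal argument (this last step matches the paper's exactly). But there is a real gap in a preliminary claim on which the rest hinges. You assert that $D_i$ is nilpotent because ``a $(g_i,1)$-skew-primitive with $q_{ii}\neq 1$ in a finite-dimensional characteristic-zero Hopf algebra is nilpotent.'' This is false: in the generalized Taft algebra $T(N,m,\alpha)$ of Theorem~\ref{thm:KR} with $m<N$ and $\alpha\neq 0$, one has $x^m=\alpha(g^m-1)$, and since $\kk\Z_N$ is semisimple the element $g^m-1$ is nonzero and non-nilpotent; as $g^m$ is central, $x^{mk}=\alpha^k(g^m-1)^k\neq 0$ for all $k$. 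Your Step~2 then uses nilpotency of $D_i$ to force $\partial=0$ and to conclude $\mathrm{ord}(\sigma_i)=n_i$, so the error propagates through the deduction of $g_i^{n_i}=1$ and $x_i^{n_i}=0$ in Step~3.

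The paper bypasses your entire Step~2 by a two-line citation: $H_i$ is a generalized Taft algebra by \cite{KropRadford}; it is Galois-theoretical since Hopf subalgebras inherit the property \cite[Proposition~10(3)]{PartI}; and a Galois-theoretical generalized Taft algebra is already an ordinary Taft algebra \cite[Proposition~21]{PartI}. This yields $x_i^{n_i}=0$ and $\mathrm{ord}_G(g_i)=n_i$ immediately, hence the surjection. Your instinct that the crux ``may also be extractable from \cite{PartI}'' is exactly right --- that is precisely where the missing nilpotency (equivalently, the ordinariness of each $H_i$) comes from, and it is not a general fact about skew-primitives but rather a constraint imposed by the field action. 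Note also that the explicit formula $x_i\mapsto w_i(1-g_i)$ you labor to derive is not needed for this proposition; it is the content of the separate Theorem~\ref{thm:H(G,theta)}.
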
 

\begin{proof}
Suppose $H$ is Galois-theoretical, and we are given an inner faithful action 
of $H$ on a field $L\supset \kk$. For each $1\le i\le \theta$, consider the Hopf subalgebra $H_i\subset H$ 
generated by $g_i$ and $x_i$. By \cite[Theorem~1(a)]{KropRadford}, $H_i$ is a generalized Taft algebra $T(n_i, m_i,\alpha_i)$, where $m_i$ divides $n_i$. 
Since $H_i$ is Galois-theoretical, by \cite[Propositions~10(3) and~21]{PartI}  we must have that
$H_i$ is an ordinary Taft algebra $T(n_i,\zeta_i)$, that is, $x_i^{n_i}=0$ and $g_i x_i = q_{ii} x_i g_i$ for some $q_{ii} \in \kk^{\times}$ with ord($q_{ii}$)$=n_i$. By taking $\chi_i \in \widehat{G}$ with $\chi_i(g_i) = q_{ii}$, this implies that the $H$-action on $L$ descends from an $H(G,\ug,\uchi)$-action.

Moreover, since $H$ acts inner faithfully on $L$, the operators defined by $x_i$ on $L$ must be linearly independent.  Indeed, if $\{x_i ~|~ i \in I_{g,\chi}\}$ are linearly dependent, then there exists an element $f = \sum_i a_i x_i$, for $a_i \in \kk$ not all zero, that acts  on $L$ as zero. So, $\langle f \rangle$ forms a Hopf ideal (as $f$ is $(g,1)$-skew-primitive), which contradicts inner faithfulness. 
\end{proof} 

Proposition \ref{anypoi} shows that it is important to determine the structure of the $H(G, \ug, \uchi)$-module fields $L$.
This is done in the following theorem. 

\begin{theorem} \label{thm:H(G,theta)}
Let $L$ be a field containing $\kk$ and equipped with a faithful action of $G$. 
Then, the extensions of the $G$-action on $L$ to a \textnormal{(}not necessarily inner faithful\textnormal{)} $H(G, \ug, \uchi)$-action
are defined by the formula
\begin{equation} \label{eq:formula xi}
x_i \mapsto w_i(1-g_i),
\end{equation}
where $w_i \in L$ is such that  $g \cdot w_i = \chi_i(g) w_i$ for all $g \in G$. In other words, $x_i$ acts as $w_i(1-g_i)$, as $\kk$-linear operators on $L$.

In this case:
\begin{enumerate}
\item $L$ is a Galois extension of the field of invariants $F = L^H=L^G$; and
\item $L$ is also a Galois extension of the intermediate field $E = L^{H'}=L^{G'}$, where $G'$ is the subgroup of $G$ generated by $g_1, \dots, g_{\theta}$, and $H'$ is the Hopf subalgebra of $H$ generated by $G'$ and $\{x_i\}_{i=1}^{\theta}$.
\end{enumerate}

Moreover,  for each $g,\chi$, the skew-primitive elements $\lbrace{x_i ~|~ i\in I_{g,\chi}\rbrace}$ are linearly independent  as $\kk$-linear operators on $L$
if and only if so are the elements $w_i$; this case can be achieved by an appropriate choice of elements $w_i \in L$. 
\end{theorem}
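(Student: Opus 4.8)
The plan is to analyze the action of each skew-primitive generator $x_i$ on $L$ directly, using the defining relations of $H(G,\ug,\uchi)$ together with the fact that $L$ is a field (so that left multiplications and the invertible group operators span a large algebra of $\kk$-linear endomorphisms). First I would fix $i$ and consider the Taft subalgebra generated by $g_i$ and $x_i$; since $x_i$ is $(g_i,1)$-skew-primitive, for any $\ell \in L$ the Leibniz rule gives $x_i \cdot (\ell \ell') = (x_i \cdot \ell)(g_i \cdot \ell') + \ell (x_i \cdot \ell')$, which says precisely that the $\kk$-linear operator $D_i := x_i|_L$ is a $(g_i,1)$-skew derivation of $L$. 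A standard fact (the field analogue of the classification of skew derivations) is that every such twisted derivation of a field is \emph{inner}: $D_i = w_i(\id - g_i)$ for a unique $w_i \in L$, where one extracts $w_i$ by choosing any $t \in L$ with $g_i \cdot t \neq t$ and setting $w_i := (x_i \cdot t)(t - g_i\cdot t)^{-1}$; a short computation using the skew-Leibniz rule shows this $w_i$ is independent of the choice of $t$ and reproduces $D_i$ everywhere. Then the relation $g x_i = \chi_i(g) x_i g$ in $H$ translates, after applying both sides to an arbitrary $\ell \in L$ and using $g\cdot(\ell) $ notation, into $g \cdot w_i = \chi_i(g)\, w_i$ for all $g \in G$; and the relation $x_i^{n_i} = 0$ is automatic since $(w_i(\id - g_i))^{n_i}$ applied to $L$ vanishes once $\chi_i(g_i) = q_{ii}$ has order exactly $n_i$ — this is the same $q$-binomial computation that shows $x_i^{n_i}=0$ in the Taft algebra, so no condition on $w_i$ is imposed. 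Conversely, given any collection of $w_i$ with the stated eigenvector property, one checks directly that the formulas \eqref{eq:formula xi} respect all relations of $H(G,\ug,\uchi)$, hence define an action extending the $G$-action; this is a routine verification.

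Having established the parametrization, I turn to the final ``moreover'' clause. Fix $g \in G$ and $\chi \in \widehat{G}$ and the index set $I_{g,\chi}$; for $i \in I_{g,\chi}$ all the operators $x_i|_L = w_i(\id - g)$ share the same invertible factor $(\id - g_i) = (\id - g)$ on the right (as operators; note $\id - g$ is \emph{not} invertible, but its restriction to the relevant isotypic components is controlled). The key point is that a $\kk$-linear relation $\sum_{i \in I_{g,\chi}} a_i\, x_i|_L = 0$ with $a_i \in \kk$ is equivalent to $\bigl(\sum a_i w_i\bigr)(\id - g) = 0$ as an operator on $L$, and since $g$ is a nontrivial automorphism of the field $L$ (the $G$-action is faithful, and if $g = 1$ then $g_i = 1$ contradicts $n_i \geq 2$), the operator $\id - g$ is nonzero and $L$ is a domain, so this forces $\sum a_i w_i = 0$ in $L$. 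Thus linear independence of $\{x_i|_L\}_{i \in I_{g,\chi}}$ over $\kk$ is equivalent to linear independence of $\{w_i\}_{i \in I_{g,\chi}}$ over $\kk$. Finally, to see this can be achieved: the $w_i$ for $i \in I_{g,\chi}$ are required only to lie in the $\chi$-isotypic component $L_\chi := \{\ell \in L : g'\cdot \ell = \chi(g')\ell \ \forall g' \in G\}$ of $L$ under the $G$-action, which is an $F$-vector space (here $F = L^G$); since $L/F$ is a $G$-Galois extension, $L_\chi \neq 0$, and in fact $\dim_F L_\chi = 1$ when $\chi$ is in the image of $\widehat{G} \to \widehat{G}$ (which it is, as $\chi = \chi_i$), so one may pick the $w_i$ to be distinct $\kk^\times$-multiples, or more robustly note that $L_\chi$ is infinite-dimensional over $\kk$, hence contains $|I_{g,\chi}|$ many $\kk$-linearly independent elements.

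I expect the main obstacle to be the ``inner skew-derivation'' step — proving that every $(g_i,1)$-skew derivation of the field $L$ has the form $w_i(\id - g_i)$, and that the resulting $w_i$ is well-defined independent of the auxiliary element $t$. This requires care: one must verify that $x_i \cdot \ell = w_i(\ell - g_i \cdot \ell)$ holds for \emph{all} $\ell$, including those with $g_i \cdot \ell = \ell$ (where both sides must vanish — which follows because such $\ell$ lie in the fixed field of $\langle g_i\rangle$, and the skew-Leibniz rule forces $x_i$ to kill the fixed field once one knows it kills $\kk$ and is a skew derivation over that subfield). A clean way to handle this uniformly is to observe that $L$ is a $\langle g_i \rangle$-Galois extension of $L^{\langle g_i\rangle}$, so $L = L^{\langle g_i \rangle}[t]$ for a suitable $t$, reducing the verification to the single generator $t$; then $w_i := (x_i \cdot t)/(t - g_i \cdot t)$ and one checks compatibility with the field structure. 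The subsequent translation of the commutation relation into the eigenvector condition on $w_i$, and the vanishing of $x_i^{n_i}$, are then formal. I would also double-check the edge case $|I_{g,\chi}| \le 1$ flagged in the footnote, where the statement degenerates to $x_i \neq 0 \iff w_i \neq 0$, which is immediate from the displayed formula.
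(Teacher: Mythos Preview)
Your route is genuinely different from the paper's. The paper decomposes $L=\bigoplus_{\alpha\in\widehat G}Fu_\alpha$ into $G$-eigenspaces, writes $x_i\cdot u_\alpha=c_i(\alpha)\,u_{\alpha\chi_i}$, derives a recursion for the coefficients $c_i(\alpha)$ via the $2$-cocycle $\psi$ governing the products $u_\alpha u_\beta$, and solves it explicitly; the seed of the recursion is $c_i(\alpha)=0$ whenever $\alpha(g_i)=1$, which the paper obtains from the nilpotency $x_i^{n_i}=0$ together with \cite[Theorem~11(i)]{PartI}. You instead recognize $x_i|_L$ as a $(g_i,1)$-skew derivation and appeal to the fact that such derivations on a field are inner. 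This is shorter and avoids the cocycle bookkeeping.

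There is, however, a real gap in your justification of innerness. You assert that ``the skew-Leibniz rule forces $x_i$ to kill the fixed field once one knows it kills $\kk$.'' That is false: on $L^{g_i}$ the twist is trivial, so $x_i|_{L^{g_i}}$ is merely an ordinary $\kk$-derivation $L^{g_i}\to L$, and these need not vanish (think of $d/dt$ on $\kk(t)$). Your fallback, writing $L=L^{g_i}[t]$ and ``reducing to $t$'', does not escape the problem either, since determining $E_i:=D_i-w_i(1-g_i)$ on all of $L$ from $E_i(t)=0$ still requires knowing $E_i|_{L^{g_i}}$. The correct (and very short) fix uses commutativity of $L$: with the right Leibniz rule $D_i(\ell\ell')=(g_i\cdot\ell)\,D_i(\ell')+D_i(\ell)\,\ell'$ (your displayed rule has $g_i$ on the wrong tensor factor), expanding $D_i(at)=D_i(ta)$ for an arbitrary $a\in L$ gives $(g_i\cdot a-a)\,D_i(t)=(g_i\cdot t-t)\,D_i(a)$, whence $D_i(a)=w_i(a-g_i\cdot a)$ with $w_i=D_i(t)/(t-g_i\cdot t)$ for \emph{every} $a$ simultaneously. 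No nilpotency is needed here; the relation $x_i^{n_i}=0$ then holds automatically, as you correctly argue.

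Two minor omissions: you do not address parts (a) and (b). Once $x_i=w_i(1-g_i)$ is established these are immediate, since $L^{x_i}\supseteq L^{g_i}$ forces $L^H=L^G$ and $L^{H'}=L^{G'}$, and faithfulness of the $G$-action makes these Galois; the paper handles them the same way, citing \cite[Corollary~13]{PartI}.
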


\begin{remark} \label{rem:w unique}
For any $\chi:G \rightarrow \kk^{\times}$, there exists $w \in L^{\times}$ such that  $g \cdot w = \chi(g) w$ for all $g \in G$. The element $w$ is unique up to multiplying by an element of $F:=L^G$. This follows by the Normal Basis Theorem: $L$ is a free $FG$-module of rank one.
As a consequence, the extension of the $G$-action to an $H(G, \ug, \uchi)$-action as in Theorem~\ref{thm:H(G,theta)} is unique up to 
renormalization, $x_i \mapsto \lambda_ix_i$ for $\lambda_i \in F$. 
\end{remark}

\smallskip

\noindent {\it Proof of Theorem~\ref{thm:H(G,theta)}.}
Take $F:= L^G$. Since $L$ is a free $FG$-module of rank one by the Normal Basis Theorem, for each $\alpha \in \widehat{G}$, we can choose $u_{\alpha} \in L^{\times}$ such that $g \cdot u_{\alpha} = \alpha(g)u_{\alpha}$; see Remark~\ref{rem:w unique}. Here,  $u_{\alpha} u_{\beta} = \psi(\alpha, \beta) u_{\alpha \beta}$, where $\psi$ is a 2-cocycle of $\widehat{G}$ with values in $F^\times$; that is to say, $\psi(\alpha, \beta) \psi(\alpha \beta, \gamma) = \psi(\beta, \gamma) \psi(\alpha, \beta \gamma)$ for all $\alpha, \beta, \gamma \in \widehat{G}$. This follows from the associativity of $L$: $(u_{\alpha} u_{\beta}) u_{\gamma} = u_{\alpha}(u_{\beta} u_{\gamma})$. 

 Since $g x_i = \chi_i(g) x_i g$ for all $g \in G$, we claim for all $\alpha \in \widehat{G}$ that 
\begin{equation} \label{eq:ci}
x_i \cdot u_{\alpha} = c_i(\alpha) u_{\alpha \chi_i},
\end{equation}
where $c_i(\alpha) \in F$ satisfies two conditions:
\begin{equation} \label{eq:ci1}
 c_i(\alpha) = 0, \text{\hspace{.2in} if } \alpha(g_i) =1;
\end{equation}
\begin{equation} \label{eq:ci2}
\alpha(g_i) \psi(\alpha, \beta \chi_i)c_i(\beta)  + \psi(\alpha \chi_i, \beta) c_i(\alpha)= \psi(\alpha, \beta) c_i(\alpha \beta).
\end{equation}

To verify \eqref{eq:ci1}, note that if $\alpha(g_i) =1$, then $u_{\alpha} \in L^{g_i}$. Since $x_i^{n_i}=0$, we can employ \cite[Theorem~11(i)]{PartI} to get $x_i \cdot u_{\alpha} = 0$. So, we conclude that $c_i(\alpha) =0$. To verify \eqref{eq:ci2}, compare the coefficients of $u_{\alpha \beta \chi_i}$ in the equation $x_i \cdot (u_{\alpha} u_{\beta}) = x_i \cdot (\psi(\alpha, \beta) u_{\alpha \beta})$ (using the coproduct of $x_i$ on the left hand side).

Let $K_i:=c_i(\chi_i)$. Then, setting $\alpha = \chi_i$ and $\beta= \chi_i^{m-1}$, we get from \eqref{eq:ci2}:
$$K_i \psi(\chi_i^2, \chi_i^{m-1}) + q_{ii} \psi(\chi_i, \chi_i^m) c_i(\chi_i^{m-1}) =
\psi(\chi_i, \chi_i^{m-1}) c_i(\chi_i^m).$$
Thus, setting $b_i(m) = \displaystyle \frac{c_i(\chi_i^m)}{q_{ii}^m \psi(\chi_i, \chi_i^m)}$, we get 
$$\displaystyle\frac{K_i ~\psi(\chi_i^2, \chi_i^{m-1})}{q_{ii}^m ~\psi(\chi_i, \chi_i^m) \psi(\chi_i, \chi_i^{m-1})} + b_i(m-1) = b_i(m).$$
Using the 2-cocycle property of $\psi$, we get that
$\displaystyle\frac{K_i}{q_{ii}^m ~\psi(\chi_i, \chi_i)} + b_i(m-1) = b_i(m).$
Since $b_i(0) =0$ by \eqref{eq:ci1}, we get 
$b_i(m) = \displaystyle\frac{K_i}{\psi(\chi_i, \chi_i)} (q_{ii}^{-1} + q_{ii}^{-2} + \dots + q_{ii}^{-m}).$
Hence
\begin{equation} \label{eq:ci3}
c_i(\chi_i^m) ~=~ K_i(1 + q_{ii} + \dots + q_{ii}^{m-1}) \frac{\psi(\chi_i, \chi_i^m)}{\psi(\chi_i,\chi_i)} ~=~ \widehat{K_i} \psi(\chi_i, \chi_i^m)(1-q_{ii}^m),
\end{equation}
where $\widehat{K_i} = \displaystyle \frac{K_i}{\psi(\chi_i, \chi_i) (1- q_{ii})}$. 

Now let $\beta \in \widehat{G}$ be a function so that $\beta(g_i) = 1$, and set $\alpha = \chi_i^m$ in \eqref{eq:ci2} to get
\begin{equation} \label{eq:ci4}
\psi(\chi_i^{m+1}, \beta)c_i(\chi_i^m) ~=~ \psi(\chi_i^m, \beta)c_i(\chi_i^m\beta).
\end{equation}
We get by \eqref{eq:ci3} and \eqref{eq:ci4} that
$$\psi(\chi_i^{m+1}, \beta) \widehat{K_i} \psi(\chi_i, \chi_i^m) (1-q_{ii}^m) = \psi(\chi_i^m, \beta) c_i (\chi_i^m \beta).$$
Hence 
$$c_i(\chi_i^m \beta) ~=~ \widehat{K_i}(1-q_{ii}^m) \frac{\psi(\chi_i  \chi_i^m, \beta) \psi(\chi_i, \chi_i^m)}{\psi(\chi_i^m, \beta)} ~=~ \widehat{K_i} (1-q_{ii}^m) \psi(\chi_i, \chi_i^m \beta).$$
Since any $\alpha \in \widehat{G}$ is of the form $\chi_i^m\beta$, where $\beta(g_i)=1$, and $0\leq m \leq$ ord($g_i$)$-1$, we have that
\begin{equation} \label{eq:ci5}
c_i(\alpha) = \widehat{K_i} (1-\alpha(g_i))\psi(\chi_i, \alpha).
\end{equation}
By setting $w_i:= \widehat{K_i} u_{\chi_i}$, we get
$$ w_i (1-g_i) \cdot u_{\alpha}  ~=~ \widehat{K_i} u_{\chi_i} \left(1 - \alpha(g_i)\right) u_{\alpha} 
~=~ \widehat{K_i} (1-\alpha(g_i)) \psi(\chi_i, \alpha) u_{\alpha \chi_i}.$$
Thus by \eqref{eq:ci} and \eqref{eq:ci5}, we get that $x_i \mapsto w_i(1-g_i)$. 
Moreover, $g \cdot w_i = \chi_i(g) w_i$ for all $g \in G$, as required. 

Conversely, if we choose $w_i \in L$  such that $g \cdot w_i = \chi(g) w_i$ for $g \in G$, then it is easy to see  that the formula $x_i \mapsto w_i(1-g_i)$ defines an extension of the $G$-action to an $H(G, \ug, \uchi)$-action.

Next, it is clear that the elements $\lbrace{x_i ~|~ i\in I_{g,\chi}\rbrace}$ are linearly independent  as $\kk$-linear operators on $L$ for each $g,\chi$
if and only if so are the elements $w_i$. The latter can be achieved since $\dim_\kk F=\infty$.  

Now we establish statements (a) and (b) as follows. By \cite[Corollary~13]{PartI}, we have that $L^H=L^G (=:F)$ and the extension $L^G \subset L$ is Galois.
For the same reasons, we also have that $L$ is a Galois extension of $E := L^{H'} = L^{G'}$.
\qed

\begin{remark} \label{rem:w_i} In \eqref{eq:formula xi}, we have $w_i=\mu_iu_{\chi_i}$, where $\mu_i\in F$, $u_{\chi_i} \in L$, so that $g \cdot \mu_i = \mu_i$ and $g \cdot u_{\chi_i} = \chi_i(g)u_{\chi_i}$. In several computations below, we take $\mu_i =1$ for all $i$ without loss of generality, since $g$ commutes with $\mu_i$.
\end{remark}

\begin{corollary} \label{cor:GnotAbel}
Let $G$ be a finite group, not necessarily abelian, and consider the Hopf algebra $H(G, \ug, \uchi)$:= $H(G, g_1, \dots, g_{\theta}, \chi_1, \dots, 
\chi_{\theta})$ defined as above, with $g_i$ belonging to the center $Z(G)$ of $G$. Then, extensions of a faithful $G$-action on a field $L\supset \kk$ to a
(not necessarily inner faithful) $H(G, \ug, \uchi)$-action are given by formula ~\eqref{eq:formula xi} as in Theorem~\ref{thm:H(G,theta)}.

Moreover,  for each $g,\chi$, the $(g,1)$-skew-primitive elements $\lbrace{x_i ~|~ i\in I_{g,\chi}\rbrace}$ are linearly independent  as $\kk$-linear operators on $L$
if and only if so are the elements $w_i$, and this can be achieved by an appropriate choice of the $w_i$. 
\end{corollary}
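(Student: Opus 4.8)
The plan is to reduce the non-abelian case to the already-proven abelian case of Theorem~\ref{thm:H(G,theta)}. The key observation is that even though $G$ need not be abelian, all the structure that actually enters the argument only sees the elements $g_i$ and how they interact with the rest of $G$; and since each $g_i$ lies in the center $Z(G)$, the relevant data behaves as in the abelian situation. First I would record that the relations $g x_i = \chi_i(g) x_i g$, together with $g_i \in Z(G)$, force $\chi_i$ to be trivial on the commutator subgroup $[G,G]$: indeed evaluating the relation on a commutator, or using that conjugation by any $g$ fixes $g_i$, shows $\chi_i$ factors through $G^{\mathrm{ab}} = G/[G,G]$. Thus without loss of generality each $\chi_i$ is pulled back from a character of the abelian group $\bar G := G^{\mathrm{ab}}$.

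Next I would pass to the fixed field of the commutator subgroup. Set $N := [G,G]$, which is normal in $G$, and let $L_0 := L^N$. Since the $G$-action on $L$ is faithful, the induced action of $\bar G = G/N$ on $L_0$ is faithful (by Artin's theorem / the Galois correspondence for the finite group $G$ acting faithfully on $L$). Now I claim that any extension of the $G$-action on $L$ to an $H(G,\ug,\uchi)$-action must preserve $L_0$: since $\chi_i$ is trivial on $N$, for $h \in N$ and $\ell \in L_0$ we have $h \cdot (x_i \cdot \ell) = \chi_i(h)\, x_i \cdot (h \cdot \ell) = x_i \cdot \ell$ (using $h \cdot \ell = \ell$), so $x_i \cdot \ell \in L^N = L_0$. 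Hence each such extension restricts to an action on $L_0$ of the Hopf algebra $H(\bar G, \bar\ug, \bar\uchi)$, where $\bar g_i$ is the image of $g_i$ in $\bar G$ (which has the same order $n_i$ as $g_i$, since $g_i \in Z(G)$ and $q_{ii} = \chi_i(g_i) = \bar\chi_i(\bar g_i)$ has order $n_i$) — here one checks that the defining relations of $H(G,\ug,\uchi)$, restricted to the subalgebra relevant for the $L_0$-action, are exactly those of $H(\bar G,\bar\ug,\bar\uchi)$.

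Now I would invoke Theorem~\ref{thm:H(G,theta)} for the abelian group $\bar G$ acting faithfully on $L_0$: every extension of the $\bar G$-action on $L_0$ to an $H(\bar G,\bar\ug,\bar\uchi)$-action is given by $x_i \mapsto \bar w_i(1 - \bar g_i)$ with $\bar w_i \in L_0$ a $\bar\chi_i$-eigenvector, the $\{x_i \mid i \in I_{g,\chi}\}$ are linearly independent as operators iff the $\bar w_i$ are, and this can be arranged since $\dim_\kk L_0^{\bar G} = \dim_\kk F = \infty$. Finally I would transport this back to $L$: since $g_i$ acts on $L$ through its action on $L_0$ (all of $L$ is a module over $G$, but $g_i \cdot$ on $L$ is determined by linear algebra over $L_0$ — more precisely, $L$ is free over $FG$ of rank one by the Normal Basis Theorem, so the operator $w_i(1 - g_i)$ on $L$ with $w_i = \bar w_i \in L_0 \subset L$ makes sense and one checks directly, exactly as in the converse direction of the proof of Theorem~\ref{thm:H(G,theta)}, that $x_i \mapsto w_i(1 - g_i)$ defines an $H(G,\ug,\uchi)$-action on all of $L$ extending the given $G$-action). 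Conversely, any extension to $L$ restricts to one on $L_0$ by the claim above, hence has this form. Linear independence of the $x_i$ as operators on $L$ is equivalent to linear independence as operators on $L_0$ (since $L_0$ generates $L$ as an $FG$-module and the $x_i$ commute appropriately with $G$; alternatively, reuse the Hopf-ideal argument of Proposition~\ref{anypoi}), and hence to linear independence of the $w_i$.

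The main obstacle I anticipate is the bookkeeping in the reduction step: one must be careful that the Hopf algebra $H(G,\ug,\uchi)$ genuinely "restricts" to $H(\bar G,\bar\ug,\bar\uchi)$ on $L_0$ — the subtlety is that $H(G,\ug,\uchi)$ is generated by all of $G$ (not just $G'$), and elements of $N = [G,G]$ act nontrivially on $L$; what one really needs is that the $x_i$-action on $L_0$ only involves $\bar G$-data, which is precisely what $g_i \in Z(G)$ and $\chi_i|_N = 1$ guarantee. Everything else is a routine translation of the abelian proof.
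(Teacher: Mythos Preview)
Your reduction to $L_0=L^{[G,G]}$ has a genuine gap in the ``transport back to $L$'' step. What you actually prove is that the given operator $x_i$ and the operator $w_i(1-g_i)$ agree on the subfield $L_0$; you then observe that $w_i(1-g_i)$ defines \emph{some} extension of the $G$-action to all of $L$. But this does not show that the \emph{given} extension coincides with it on $L\setminus L_0$. Concretely, set $D_i:=x_i-w_i(1-g_i)$: this is a $(g_i,1)$-skew derivation of $L$ that vanishes on $L_0$ and satisfies $gD_ig^{-1}=\chi_i(g)D_i$ for all $g\in G$, but nothing in your argument forces $D_i=0$ on the $N$-nontrivial part of $L$. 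The sentence ``any extension to $L$ restricts to one on $L_0$, hence has this form'' conflates ``has this form on $L_0$'' with ``has this form on $L$''.

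The paper's route avoids this by never passing to a subfield. Instead one reruns the proof of Theorem~\ref{thm:H(G,theta)} with the abelian group $Z(G)$ in place of $G$ and $F:=L^{Z(G)}$ in place of $L^G$: since $g_i\in Z(G)$ and $Z(G)$ acts faithfully on $L$, the eigenspace decomposition $L=\bigoplus_{\alpha\in\widehat{Z(G)}}F\,u_\alpha$ is available on all of $L$, and the computation of Theorem~\ref{thm:H(G,theta)} goes through verbatim to give $x_i=w_i(1-g_i)$ as operators on $L$. The only new wrinkle is that now $G/Z(G)$ may act nontrivially on $F$, so the $w_i$ are determined only up to $L^G=F^{G/Z(G)}$ rather than up to $F$. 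Your approach can be repaired by first applying Theorem~\ref{thm:H(G,theta)} to the cyclic (hence abelian) subgroup $\langle g_i\rangle$ acting on $L$ to get $x_i=w_i'(1-g_i)$ on all of $L$, and then using the relations $gx_i=\chi_i(g)x_ig$ for $g\in G$ to see that $w_i'$ is a $\chi_i$-eigenvector for all of $G$; but once you do that, the passage through $L_0$ is no longer needed.
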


\begin{proof} 
Adapt the proof of Theorem~\ref{thm:H(G,theta)}, except  that $F=L^{Z(G)}$, not $L^G$, and $G/Z(G)$ may act nontrivially on $F$. 
So, $\{w_i\}$ are unique not up to elements of $F$, but up to elements of $L^G=F^{G/Z(G)}$.
\end{proof}

We now provide an example of how to construct an inner faithful action of a finite-dimensional pointed Hopf algebra on a field as an extension of a faithful group action.

\begin{example}\label{sl2ex}
Let $m \geq 2$ and let $q$ be a root of unity in $\kk$ with ord($q^2$)=$m$. Consider the small quantum group $u_q(\mf{sl}_2)$ generated by a grouplike element $k$, a ($k, 1$)-skew-primitive element $e$, and a ($1, k^{-1}$)-skew-primitive element $f$, subject to relations:
$$ ke = q^2ek,  \quad kf = q^{-2}fk, \quad e^m =0, \quad f^m = 0, \quad k^m = 1, \quad ef -  fe = \frac{k - k^{-1}}{q - q^{-1}}.$$
Here, $G(u_q(\mf{sl}_2)) = \mathbb{Z}_m$ and the Hopf algebra $H(G, \ug, \uchi)$ is generated by $k,e,f$, subject to the first five relations. To show that the action $H(G, \ug, \uchi)$ on a field descends to an action of $u_q(\mf{sl}_2)$, 
we only need to work with the last relation of $u_q(\mf{sl}_2)$.

We construct an action of $u_q(\mf{sl}_2)$  on $\kk(z)$ as follows. Let $G = \mathbb{Z}_m = \langle k \rangle$ act on $\kk(z)$ by $ k \cdot z = q^{-2}z$. 
Take $x_1 = e$ and $x_2 = kf$; these elements are $(k,1)$-skew-primitive. Then $\chi_1(k) = q^2$ and $\chi_2(k) = q^{-2}$, as $ke = q^2ek$ and $k(kf) = q^{-2}(kf)k$. The last relation of $u_q(\mf{sl}_2)$ then has the form
\begin{equation} \label{eq:blah}
q^2 x_1 x_2 - x_2 x_1 = (k^2 - 1)(q-q^{-1})^{-1}.
\end{equation} 
Thus, by Theorem~\ref{thm:H(G,theta)}, we get that
the condition for the $H(G, \ug, \uchi)$-action on $\kk(z)$ to descend to an action of $u_q(\mf{sl}_2)$ is
\[
\begin{array}{rl}
q^2w_1(1-k)w_2(1-k) - w_2(1-k)w_1(1-k)
&= (k^2 -1)(q-q^{-1})^{-1},
\end{array}
\]
i.e., after simplifications on the left hand side, 
\[
w_1w_2(q^2-1)(1-k^2)
= (k^2 -1)(q-q^{-1})^{-1}.
\]
Thus, the condition is $w_1w_2 = -q(q^2-1)^{-2}$. Now, set
$$w_1 = (1-q^{-2})^{-1}z^{-1} \quad \text{and} \quad w_2 = -q^{-1}(q^2-1)^{-1} z$$
to get the action of $e$ and $f$. Namely, we chose $w_1$ as above to get that $e$, which acts as $w_1(1-k)$, satisfies $e \cdot z =1$. On the other hand, $kf$, which acts as $w_2(1-k)$, satisfies $$kf \cdot z ~=~ - q^{-1}(q^2-1)^{-1}(1-q^{-2})z^2~=~-q^{-3} z^2.$$ Hence, $f \cdot z = q^4 kf \cdot z = -qz^2$. This recovers the action from \cite[Proposition~25(2)]{PartI}.
\end{example}

%%%%%%%%%%%%%%%%%%%%%%%%%%%%%%%%%%%%%

\subsection{The Nichols algebra relations} \label{nar}

We now return to our main problem: determining when a finite-dimensional pointed Hopf algebra $H$ 
with an abelian group of grouplike elements $G$ is Galois-theoretical. 
By Proposition~\ref{prop:Angiono}, $H$ is generated by $G$ and $(g_i,1)$-skew-primitive elements $x_i$, for $i = 1, \dots, \theta$.

Assume that $H$ is coradically graded, i.e., $H$ is the bosonization $\mf{B}(V) \# \kk G$. 
In this case, $H$ is a quotient of $H(G,\ug,\uchi)$ of Definition~\ref{def:H(G,theta)}. The kernel of the surjective homomorphism $\phi: H(G,\ug,\uchi)\to H$ 
is generated by some noncommutative polynomials $P_\alpha$ in the $x_i$, which are relations of 
the Nichols algebra $\mf{B}(V)$. Note that since $H$ is coradically graded, we can choose $P_\alpha$ to be homogeneous of some degree $d_i(\alpha)$ in each $x_i$.

Consider any action of $H(G,\ug,\uchi)$ on $L$ such that $G$ acts faithfully.  
By Theorem~\ref{thm:H(G,theta)}, this action is given by the formula 
$x_i\mapsto w_i(1-g_i)$. Then, $P_\alpha$ acts 
on $L$ by the operator $\prod_i w_i^{d_i(\alpha)}Q_\alpha$, 
where $Q_\alpha$ is an element of $\kk G$. Note that $Q_\alpha$ is independent of the 
choice of the $w_i$ and the choice of module field $L$.

\begin{proposition}\label{criter} Let $H$ be a finite-dimensional, pointed, coradically graded Hopf algebra with an abelian group of grouplike elements $G$. Then,  $H$ is Galois-theoretical if and only if 
the elements $Q_\alpha\in \kk G$ vanish for all $\alpha$.
\end{proposition}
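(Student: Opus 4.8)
The plan is to prove both directions of the equivalence using the structural results already established. For the ``if'' direction, suppose all $Q_\alpha \in \kk G$ vanish. Then I would pick any finite abelian group $G$ together with a faithful action on some field $L \supset \kk$ (for instance, take $L = \kk(z_g : g \in G)$ with $G$ permuting coordinates via the regular representation, or more economically realize $G \hookrightarrow \mathrm{GL}_n(\kk)$ acting on a rational function field). By Theorem~\ref{thm:H(G,theta)}, any choice of elements $w_i \in L$ with $g \cdot w_i = \chi_i(g) w_i$ extends the $G$-action on $L$ to an $H(G,\ug,\uchi)$-action via $x_i \mapsto w_i(1-g_i)$. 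Since each relation $P_\alpha$ of the Nichols algebra acts on $L$ as $\prod_i w_i^{d_i(\alpha)} Q_\alpha = 0$, this $H(G,\ug,\uchi)$-action factors through $\phi : H(G,\ug,\uchi) \to H$, giving an $H$-action on $L$. It remains to arrange inner faithfulness: by Proposition~\ref{anypoi} and the linear-independence clause of Theorem~\ref{thm:H(G,theta)}, it suffices to choose the $w_i$ so that for each pair $(g,\chi)$ the elements $\{w_i : i \in I_{g,\chi}\}$ are $\kk$-linearly independent, which is possible since $\dim_\kk F = \infty$. One must also check that no \emph{other} Hopf ideal of $H$ kills $L$; here I would invoke that $H$ is generated by $G$ and the $x_i$, that $G$ acts faithfully, and that every $x_i$ acts nonzero (because $w_i \ne 0$ and $g_i \ne 1$), together with the fact (cf.\ the argument in Proposition~\ref{anypoi}) that any nonzero Hopf ideal of such an $H$ must meet $\kk G$ or contain a nonzero skew-primitive combination — a standard coradical-filtration argument.

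For the ``only if'' direction, suppose $H$ is Galois-theoretical, so there is an inner faithful $H$-action on a field $L \supset \kk$. By Proposition~\ref{anypoi}, this action descends from an $H(G,\ug,\uchi)$-action on $L$ with $G$ acting faithfully, hence by Theorem~\ref{thm:H(G,theta)} it has the form $x_i \mapsto w_i(1-g_i)$ for suitable $w_i \in L^\times$ (nonzero, since otherwise $x_i$ acts as zero, violating inner faithfulness of the quotient that kills $x_i$). Since the $H(G,\ug,\uchi)$-action factors through $\phi$, every Nichols relation $P_\alpha$ acts as zero on $L$, i.e.\ $\prod_i w_i^{d_i(\alpha)} Q_\alpha = 0$ as an operator on $L$. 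Because $L$ is a field (an integral domain) and each $w_i$ is a nonzero element of $L$, the scalar factor $\prod_i w_i^{d_i(\alpha)}$ is invertible, so $Q_\alpha$ must act as zero on $L$. Finally I would upgrade ``$Q_\alpha$ acts as zero on $L$'' to ``$Q_\alpha = 0$ in $\kk G$'': since $G$ acts faithfully on $L$, the images of the group elements of $G$ are $\kk$-linearly independent operators on $L$ (this is Dedekind's lemma on independence of characters / a consequence of faithfulness of a $G$-action on a field), so the element $Q_\alpha = \sum_{g} c_g\, g \in \kk G$ acting as zero forces all $c_g = 0$.

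The main obstacle I anticipate is the inner-faithfulness bookkeeping in the ``if'' direction: producing the action is immediate from Theorem~\ref{thm:H(G,theta)}, but verifying that \emph{no} nonzero Hopf ideal of $H$ annihilates $L$ requires knowing that the only way for an action of $H$ (generated by $G$ and the $x_i$, with $G$ abelian) to fail to be inner faithful is through a relation among the $x_i$ within a single isotypic block $I_{g,\chi}$ or through a group-algebra ideal — exactly the dichotomy used in Proposition~\ref{anypoi}. One should make sure the cited results (\cite[Corollary~13]{PartI}, the Krop--Radford structure theorem, and the inner-faithfulness criterion) combine to rule out exotic Hopf ideals; in practice this is where I would cite the relevant lemma from \cite{PartI} rather than reprove it. A secondary, purely technical point is exhibiting a concrete faithful $G$-module field with $\dim_\kk L^G = \infty$ so that the linear-independence condition on the $w_i$ can genuinely be met; the function-field construction in Remark~\ref{rem:w unique}'s spirit handles this.
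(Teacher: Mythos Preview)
Your proposal is correct and follows essentially the same approach as the paper's proof. The only notable difference is that for the inner-faithfulness step in the ``if'' direction the paper cites \cite[Corollary~5.4.7]{Montgomery} directly (any nonzero Hopf ideal in $H$ meets the $\kk$-span of the $x_i$ nontrivially), which is precisely the ``standard coradical-filtration argument'' you allude to; and in the ``only if'' direction the paper leaves implicit the Dedekind/Artin independence-of-characters step that you spell out explicitly.
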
  

\begin{proof} 
To prove the forward direction, assume that $H$ is Galois-theoretical, and acts inner faithfully on a field $L$. 
Then, we can pull back the action of $H$ on $L$ to an action of  
$H(G,\ug,\uchi)$, where $P_\alpha$ acts by zero. Since $H$ acts on $L$ inner faithfully, each $x_i$ acts by nonzero. So, each $w_i$ is nonzero. Thus, $Q_\alpha=0$ for all 
$\alpha$, as desired. 

Conversely, fix a faithful action of $G$ on a field $L$. 
Note that since $Q_\alpha=0$, the formula 
$x_i\mapsto w_i(1-g_i)$ defines an $H$-action on $L$. Pick $\{w_i~|~ i \in I_{g,\chi}\}$ 
to be linearly independent; we have shown in Theorem \ref{thm:H(G,theta)} that this can be achieved. 
Then, the $x_i$ act by linearly independent operators on $L$. By  \cite[Corollary~5.4.7]{Montgomery}, any nonzero Hopf ideal in $H$ intersects the $\kk$-span of $\{x_i~|~i\in I_{g,\chi}\}$ 
in $H$ nontrivially. Therefore, the $H$-action on $L$
is inner faithful.   
\end{proof} 

This gives an effective criterion of Galois-theoreticity for coradically graded $H$: the relations $P_\alpha$ are known,
which allows one to  calculate explicitly the elements $Q_\alpha$. 

\begin{example} In contrast with Example~\ref{sl2ex}, take $H={\rm gr}(u_q(\mf{sl}_2))$. We then have a single relation $P_1=q^2x_1x_2-x_2x_1$ of $\mc{B}(V)$ to consider. The corresponding element of $\kk G$ is
$$Q_1= q^2(1-\chi_2(k)k)(1-k) - (1-\chi_1(k)k)(1-k) = (q^2-1)(1-k^2)\ne 0.$$
This shows that 
${\rm gr}(u_q(\mf{sl}_2))$ is not Galois-theoretical due to Proposition~\ref{criter}. 
\end{example}

We will use the Galois-theoreticity criterion above in a number of other examples below.  

\begin{proposition}\label{bothcannot} 
Let $H$ be a finite-dimensional pointed Hopf algebra with an abelian group of grouplike elements $G$. If $H$ is Galois-theoretical, then a nontrivial lifting of $H$ cannot be Galois-theoretical.  
\end{proposition}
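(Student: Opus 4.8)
The plan is to use the criterion of Proposition~\ref{criter} together with the explicit description of $H(G,\ug,\uchi)$-actions in Theorem~\ref{thm:H(G,theta)}. First I would reduce to the coradically graded picture: if $H$ is a finite-dimensional pointed Hopf algebra with $G=G(H)$ abelian, then $\mathrm{gr}(H)$ is the bosonization $\mf{B}(V)\#\kk G$, and both $H$ and any nontrivial lifting $\widetilde H$ of $\mathrm{gr}(H)$ are quotients of the infinite-dimensional algebra $H(G,\ug,\uchi)$ of Definition~\ref{def:H(G,theta)}. Here "$\widetilde H$ is a nontrivial lifting of $H$" should be read as: $\mathrm{gr}(\widetilde H)\cong\mathrm{gr}(H)$, but $\widetilde H$ is not isomorphic to $H$ as a filtered object — equivalently, some defining relation of $\widetilde H$ is a genuine deformation of a Nichols-algebra relation $P_\alpha$, with lower-order terms lying in the span of $G$ and the $x_ig^{(\cdot)}$.

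Next I would run the two actions through Theorem~\ref{thm:H(G,theta)} in parallel. Fix a field $L$ with a faithful $G$-action and set $x_i\mapsto w_i(1-g_i)$. If $H$ (equivalently $\mathrm{gr}(H)=\mf{B}(V)\#\kk G$) is Galois-theoretical, then by Proposition~\ref{criter} every element $Q_\alpha\in\kk G$ — the "group part" obtained by evaluating the Nichols relation $P_\alpha$ on the operators $w_i(1-g_i)$ and factoring out $\prod_i w_i^{d_i(\alpha)}$ — vanishes. Now consider the corresponding deformed relation $\widetilde P_\alpha = P_\alpha - (\text{lower order terms})$ of the nontrivial lifting $\widetilde H$. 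Evaluating $\widetilde P_\alpha$ on the same operators, the top-degree part contributes $\prod_i w_i^{d_i(\alpha)}Q_\alpha=0$, so the requirement that $\widetilde P_\alpha$ act by zero becomes the requirement that the lower-order part act by zero. The key point is a degree/weight count: the lower-order terms have strictly smaller total degree in the $x_i$ than $P_\alpha$, hence (after substitution) involve strictly fewer factors of the $w_i$, so they cannot be cancelled against the vanishing top part — they must vanish on their own. But the lower-order correction is, by the definition of a \emph{nontrivial} lifting, not identically zero as an element of $H(G,\ug,\uchi)$; I would argue it gives a nonzero operator on $L$ (one can arrange the $w_i$ generically, using $\dim_\kk F=\infty$ as in Theorem~\ref{thm:H(G,theta)}, so that no accidental cancellation among the finitely many monomials occurs), so $\widetilde P_\alpha$ does \emph{not} act by zero for any admissible choice of $w_i$. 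Since every $H(G,\ug,\uchi)$-action on a field arises this way (Theorem~\ref{thm:H(G,theta)}), and every inner faithful action of $\widetilde H$ on a field descends from such an action by Proposition~\ref{anypoi}, it follows that $\widetilde H$ has no inner faithful action on a field, i.e., $\widetilde H$ is not Galois-theoretical.

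I expect the main obstacle to be the degree-counting/non-degeneracy step: making precise the claim that the lower-order terms of a nontrivial lifting relation, once substituted, produce an operator on $L$ that cannot be killed — neither by itself nor in combination with the (vanishing) top-degree part. One has to be careful that the lower-order terms are themselves of the form $\big(\sum_i\lambda_i w_i(1-g_i)\big)\cdot(\text{element of }\kk G) + (\text{element of }\kk G)$, with the degree strictly dropping, and that linear independence of the $w_i$ (which Theorem~\ref{thm:H(G,theta)} guarantees can be arranged) prevents collapse. A clean way to package this is to observe that the leftmost $w$-weight of the correction term is strictly less than $\sum_i d_i(\alpha)$ under the $\mathbb{Z}^\theta$-grading by $x_i$-degree that survives the substitution; since the substituted operators are homogeneous of definite $w$-weight, different-weight pieces of $\widetilde P_\alpha$ cannot interact, so the correction must vanish independently, contradicting nontriviality. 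A secondary technical point is handling the case where the Hopf subalgebra $H'$ generated by $\{g_i,x_i\}$ is a proper quotient of $H(G,\ug,\uchi)$ by additional grouplike relations; here I would simply pass to $H'$ and its nontrivial lifting throughout, since Galois-theoreticity of $H$ is governed by $H'$ by the results of Section~\ref{sec:pointed}.
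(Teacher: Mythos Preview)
Your proposal assembles the right ingredients (Proposition~\ref{anypoi}, Theorem~\ref{thm:H(G,theta)}, Proposition~\ref{criter}), but the central step has a genuine gap. You want to conclude that the lower-order correction $R_\alpha$ of a deformed relation $\widetilde P_\alpha = P_\alpha + R_\alpha$ acts by a \emph{nonzero} operator on the given $\widetilde H$-module field $L$, and you justify this by saying ``one can arrange the $w_i$ generically.'' But the direction of quantification is wrong: you are handed an inner-faithful $\widetilde H$-action on $L$, and the $w_i$ are determined by it; you do not get to choose them. The weight argument does not rescue this either. After substituting $x_i\mapsto w_i(1-g_i)$, the $w_i$ are specific elements of $L$, not formal variables, so there is no abstract reason a nonzero lower-degree element of $H(G,\ug,\uchi)$ must act nontrivially on this particular $L$; the kernel of $H(G,\ug,\uchi)\to \mathrm{End}_\kk(L)$ is large, and ``$R_\alpha\ne 0$ in $H(G,\ug,\uchi)$'' does not imply ``$R_\alpha$ acts nonzero on $L$.'' Your observation that the top-degree part already vanishes (so $R_\alpha$ must vanish ``independently'') is fine, but it only yields that $R_\alpha$ acts by zero on $L$, which is exactly what you would expect if the action factors through $\widetilde H$; it does not by itself produce a contradiction with nontriviality of the lifting.

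The paper's proof bypasses this difficulty by using inner faithfulness at a different point, and never touches the deformed relations $\widetilde P_\alpha$ at all. Since $H$ is Galois-theoretical, Proposition~\ref{criter} gives $Q_\alpha=0$ for every Nichols relation $P_\alpha$, so each $P_\alpha$ acts by zero under \emph{any} $H(G,\ug,\uchi)$-action on a field of the form $x_i\mapsto w_i(1-g_i)$---in particular under the one coming from the assumed inner-faithful $\widetilde H$-action on $L$. Hence the image of the Hopf ideal $(P_\alpha)$ in $\widetilde H$ is a Hopf ideal that annihilates $L$; by inner faithfulness of the $\widetilde H$-action this image is zero, i.e., $(P_\alpha)\subset \ker\bigl(H(G,\ug,\uchi)\twoheadrightarrow \widetilde H\bigr)$. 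Thus $\widetilde H$ is a Hopf quotient of $H=H(G,\ug,\uchi)/(P_\alpha)$, and since $\dim\widetilde H=\dim H$ we get $\widetilde H=H$. The missing idea in your approach is precisely this: rather than trying to force the correction $R_\alpha$ to act nontrivially, push the \emph{undeformed} relations $P_\alpha$ into $\widetilde H$ and let inner faithfulness kill them there, yielding a surjection $H\twoheadrightarrow\widetilde H$ and hence equality by dimension.
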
 

\begin{proof} 
Let $\widetilde{H}$ be a lifting of $H$. 
Assume that $L$ is an inner faithful $\widetilde{H}$-module field. 
Then by Proposition~\ref{anypoi}, $\widetilde{H}$ is a Hopf quotient of $H(G,\ug,\uchi)$. Further,
the action of $\widetilde{H}$ on $L$ descends from an action of 
$H(G,\ug,\uchi)$ on $L$. But since $H$ is Galois-theoretical, we have $Q_\alpha=0$ for all $\alpha$ by Proposition~\ref{criter}. So, 
for any action of $H(G,\ug,\uchi)$ on $L$ the elements $P_\alpha$ act by zero. In particular, the above action of $H(G,\ug,\uchi)$ factors through $H$. So, $\widetilde{H}$ is a Hopf quotient of $H$, of the same dimension as $H$. Hence, $\widetilde{H}=H$ (i.e., the action of $\widetilde H$ on $L$ is, in fact, and action of $H$), that is, $\widetilde{H}$ is a trivial lifting of $H$. 
\end{proof} 

\subsection{Minimal Hopf algebras} \label{subsec:minimal}

Let $H$ be a pointed Hopf algebra with $G=G(H)$ abelian 
(so, generated by grouplike and skew-primitive elements by Proposition~\ref{prop:Angiono}), and consider the following notation.
\medskip

\noindent {\it Notation} [$G', H'$].
Let $G'\subset G$ be the subgroup generated by all $g\in G$ for which there is a nontrivial $(g,1)$-skew-primitive 
element in $H$.  Let $H'$ be the subalgebra of $H$ generated by $G'$ and 
all of the $(g,1)$-skew-primitive elements for $g\in G'$. 
\medskip

Clearly, $G'$ is a normal subgroup of $G$ and $H=\kk G\otimes_{\kk G'}H'$. 

\begin{definition} \label{def:minimal} 
We say that $H$ is {\it minimal} if $H=H'$.
\end{definition} 

\begin{example} \label{ex:minimal} We have the following examples of minimal pointed Hopf algebras from \cite{PartI}:
\begin{enumerate}
\item Taft algebras $T(n,\zeta)$ with $G' = \langle g \rangle \cong \mathbb{Z}_n$; 
\item $E(n)$  with $G' = \langle g \rangle \cong \mathbb{Z}_2$; 
\item generalized Taft algebras $T(n,m,\alpha)$  with $G' = \langle g \rangle \cong \mathbb{Z}_n$; 
\item book algebras $\mathbf{h}(\zeta,p)$  with $G'=\langle g \rangle = \mathbb{Z}_n$;
\item $H_{3^4}$  with $G' = \langle g \rangle = \mathbb{Z}_3$; and
\item $u_q(\mf{sl}_2)$  with $G'=\langle k \rangle = \mathbb{Z}_{m}$.
\end{enumerate}
\end{example}

To study the Galois-theoretical properties of pointed coradically graded Hopf algebras with $G(H)$ abelian (as in Sections~\ref{sec:coradA1r} and~\ref{sec:rank2}), we may focus on the case of $H$ being minimal due to the following result. 

\begin{proposition} \label{prop:minimal} Suppose $H$ is a finite-dimensional, pointed, coradically graded Hopf algebra with $G=G(H)$ abelian. 
Then, $H$ is Galois-theoretical if and only if so is the minimal Hopf subalgebra $H'$. 
\end{proposition}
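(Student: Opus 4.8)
The plan is to prove both implications using the structural decomposition $H = \kk G \otimes_{\kk G'} H'$ together with the criterion of Proposition~\ref{criter}. First I would record what it means, in terms of the data $(G, \ug, \uchi)$ and the Nichols relations $P_\alpha$, to apply that criterion: the relations $P_\alpha$ of $\mf{B}(V)$ only involve the $x_i$, hence only the grouplike elements $g_i \in G'$ appear in the resulting $Q_\alpha \in \kk G$. In fact $Q_\alpha \in \kk G'$, because $Q_\alpha$ is a product of factors of the form $(1 - \chi(g_i) g_i)$ with $g_i \in G'$. Since $\mf{B}(V)$ is the \emph{same} Nichols algebra whether we view $H$ or $H'$ as its bosonization (the braided vector space $(V,c)$ is unchanged — $V$ is spanned by the same $x_i$ with the same braiding matrix $(q_{ij})$), the polynomials $P_\alpha$ and hence the elements $Q_\alpha$ are literally the same for $H$ and for $H'$. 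Thus by Proposition~\ref{criter}, \emph{$H$ is Galois-theoretical iff all $Q_\alpha = 0$ in $\kk G$ iff all $Q_\alpha = 0$ in $\kk G'$ iff $H'$ is Galois-theoretical.} The only subtlety is that Proposition~\ref{criter} requires $G$ (resp. $G'$) to act faithfully on the module field, and that $Q_\alpha$ vanishing in $\kk G'$ is equivalent to vanishing in $\kk G$ — but this is immediate since $\kk G' \hookrightarrow \kk G$.

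Alternatively — and this may be the cleaner writeup — I would argue directly at the level of actions rather than invoking Proposition~\ref{criter} twice. For the ``if'' direction: suppose $H'$ is Galois-theoretical, acting inner faithfully on a field $E \supset \kk$ with $G'$ acting faithfully. I would extend $E$ to a field $L$ with a faithful $G$-action restricting to the given $G'$-action on a copy of $E$ inside $L$; concretely, since $G/G'$ is a finite abelian group, take $L = E(t_1,\dots)$ or a suitable $G/G'$-Galois extension, or more simply observe that one can realize $G$ faithfully on some $L \supseteq E$ compatibly — this is the step requiring a little care. Then extend the $H'$-action on $E$ to an $H$-action on $L$ using $x_i \mapsto w_i(1-g_i)$ with the \emph{same} $w_i \in E \subset L$ (note $g_i \in G'$ acts on $E$, so this is well-defined on $E$ and extends $\kk$-linearly using the $FG$-module structure of $L$), and check that the Nichols relations $P_\alpha$ still hold because $Q_\alpha = 0$ was already forced by the $H'$-action on $E$. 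Inner faithfulness of the resulting $H$-action on $L$ follows from Proposition~\ref{anypoi}/Corollary~5.4.7 of \cite{Montgomery} once the $x_i$ act by linearly independent operators, which they do since they did so for $H'$. For the ``only if'' direction: if $H$ acts inner faithfully on $L$ with $G$ faithful, restrict to the Hopf subalgebra $H'$; by Theorem~\ref{thm:H(G,theta)}(b) (the statement of Theorem~\ref{thm:H(G,theta)intro}), $L$ is Galois over $E = L^{H'} = L^{G'}$, and the restricted action of $H'$ on $L$ has $G'$ acting faithfully; inner faithfulness of this restricted $H'$-action follows because any nonzero Hopf ideal of $H'$ generates a nonzero Hopf ideal of $H$ (using $H = \kk G \otimes_{\kk G'} H'$, so $H$ is faithfully flat, indeed free, over $H'$).

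I expect the main obstacle to be the bookkeeping around faithfulness of the group actions when passing between $H$ and $H'$ — specifically, in the ``if'' direction, producing the ambient field $L$ with a faithful $G$-action extending the faithful $G'$-action on $E$ without disturbing inner faithfulness, and in the ``only if'' direction, verifying cleanly that restricting an inner faithful $H$-action yields an inner faithful $H'$-action (this is where $H = \kk G \otimes_{\kk G'} H'$ and the normality of $G'$ in $G$ get used). Everything else — that $Q_\alpha$ is the same element of $\kk G'$ for $H$ and for $H'$, and that it lies in $\kk G'$ — is essentially a formal consequence of the fact that the defining Nichols relations $P_\alpha$ involve only the $x_i$ with $g_i \in G'$. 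Given that Proposition~\ref{criter} already packages the ``only if'' side of both $H$ and $H'$, the slickest route is probably: observe $Q_\alpha^{(H)} = Q_\alpha^{(H')} \in \kk G' \subseteq \kk G$, then apply Proposition~\ref{criter} to $H$ and to $H'$ separately and note both conditions read ``$Q_\alpha = 0$ for all $\alpha$.''
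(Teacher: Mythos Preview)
Your proposal is correct, and the ``slickest route'' you identify at the end --- observing that the $Q_\alpha$ are the same elements of $\kk G'$ for both $H$ and $H'$, then applying Proposition~\ref{criter} to each --- is exactly the paper's proof, which is two sentences making precisely this observation. Your alternative direct argument via extending and restricting actions is sound but unnecessary; the paper simply notes that the criterion of Proposition~\ref{criter} reads identically for $H$ and $H'$.
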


\begin{proof} 
Using the notation from the beginning of Subsection~\ref{nar}, we get that the elements $Q_\alpha$ for both $H$ and $H'$ are the same. So by applying Proposition~\ref{criter} to  both $H$ and $H'$, we get the desired result.
\end{proof}

%%%%%%%%%%%%%%%%%%%%%%%%%%%%%%%%%%%%%

\section{Coradically graded Galois-theoretical Hopf algebras, type A$_1^{\times \theta}$} \label{sec:coradA1r}

We begin this section by classifying finite-dimensional, pointed, Galois-theoretical Hopf algebras of rank one (of type A$_1$).
We then study the Galois-theoretical property of pointed coradically graded Hopf algebras of finite Cartan type A$_1^{\times \theta}$; these are known as {\it bosonizations of quantum linear spaces}.

%%%%%%%%%%%%%%%%%%%%%%%%%%%%%%%%%%%%%
\subsection{Type A$_1$: Galois-theoretical Hopf algebras of rank one} \label{subsec:rank1}

We determine precisely when a finite-dimensional pointed Hopf algebra $H$ of rank one is Galois-theoretical. We also determine in this case the structure of the $H$-module fields $L$.
The classification of finite-dimensional pointed Hopf algebras of rank one is provided in \cite{KropRadford}; we repeat their result below.

\begin{theorem} \label{thm:KR} \cite[Theorem~1(a)]{KropRadford} Let $G$ be a finite group with character map $\chi:G \rightarrow \kk^{\times}$, and take $g \in Z(G)$ and $\alpha \in \kk$. Every finite-dimensional pointed Hopf algebra of rank one is generated by $G$ and a $(g,1)$-skew-primitive element $x$, subject to the group relations of $G$ and the relations
$$x^m = \alpha(g^m -1)  \quad \text{ and } \quad ax = \chi(a) xa$$
for all $a \in G$. \qed
\end{theorem}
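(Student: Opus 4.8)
\textbf{Proof plan for Theorem~\ref{thm:KR} (classification of rank-one pointed Hopf algebras).}

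The statement to be proved is the Krop--Radford classification itself (quoted as \cite[Theorem~1(a)]{KropRadford}), so the natural plan is to reconstruct its proof from the structure theory of pointed Hopf algebras rather than to invoke it. The plan is to first use the rank-one hypothesis to pin down the coradical filtration, then to produce the single skew-primitive generator, then to deduce the commutation relation with $G$, and finally to analyze the only nontrivial relation, namely the power $x^m$.

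First I would set $G=G(H)$, so $H_0=\kk G$, and use the assumption that $H$ is generated by $H_1$ as an algebra together with $\dim(\kk\otimes_{H_0}H_1)=2$ (rank one). By the theory of the coradical filtration for pointed Hopf algebras with $H_0$ a Hopf subalgebra, $H_1 = \kk G \oplus (\text{skew-primitive part})$, and rank one forces the space of nontrivial skew-primitives, modulo $\kk G$, to be one-dimensional. Writing $P_{g,g'}(H)$ for $(g,g')$-skew-primitives and using that $S(g)xS(g')^{-1}$-type twists move one between the various $P_{g,g'}$, one reduces to a single $(g,1)$-skew-primitive element $x$ spanning this line, for a well-defined $g\in G$. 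One then checks $g\in Z(G)$: conjugation of $x$ by $a\in G$ lands again in the one-dimensional skew-primitive space, so $axa^{-1}=\chi(a)x$ for a scalar $\chi(a)$, and comparing the $(g,1)$ type of both sides forces $aga^{-1}=g$; the map $a\mapsto\chi(a)$ is then visibly a homomorphism $G\to\kk^\times$, giving the relation $ax=\chi(a)xa$.

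Next I would determine the remaining relation. Since $H$ is generated by $G$ and $x$ with the commutation relation above, $H$ is a quotient of the corresponding pointed Hopf algebra freely generated by these data, and the only obstruction to freeness in the $x$-direction is that some power $x^m$ must be forced to lie in $\kk G$. Using the $q$-binomial identity for the coproduct of $x^k$ with $q:=\chi(g)$, one finds that $\Delta(x^k)=g^k\otimes x^k + x^k\otimes 1 + \sum_{0<j<k}\binom{k}{j}_q\,x^{k-j}g^j\otimes x^j$; for $x^m$ to be skew-primitive (hence expressible via a grouplike, the only way it can be killed in a finite-dimensional quotient without introducing new skew-primitives and violating rank one) we need $\binom{m}{j}_q=0$ for all $0<j<m$, which happens precisely when $q$ is a primitive $m$-th root of unity (or $m=1$). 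In that case $x^m$ is $(g^m,1)$-skew-primitive, hence equals $\alpha(g^m-1)$ for some $\alpha\in\kk$ (the full space $P_{g^m,1}$ in a rank-one Hopf algebra being spanned by $g^m-1$, since otherwise we would get an extra skew-primitive). This yields $x^m=\alpha(g^m-1)$. Finally one checks this data genuinely defines a Hopf algebra (the relations are compatible with $\Delta$) and that $\dim H=m|G|<\infty$, so every finite-dimensional rank-one pointed Hopf algebra arises this way.

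\textbf{Main obstacle.} The delicate point is the rank-one rigidity at each stage: showing that the skew-primitive part of $H_1$ modulo $\kk G$ is exactly one-dimensional and \emph{stays} so, i.e. that no higher relation can secretly create an independent skew-primitive element, and correspondingly that $P_{g^m,1}(H)=\kk(g^m-1)$. This is where one must invoke that $H$ is generated in degree one together with the precise bookkeeping of the coradical filtration (Taft-type arguments via $q$-binomials), and it is the step most easily glossed over. The group-theoretic and $q$-binomial computations themselves are routine once this rigidity is in hand.
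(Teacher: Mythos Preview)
The paper gives no proof of this statement: Theorem~\ref{thm:KR} is quoted verbatim from \cite[Theorem~1(a)]{KropRadford} and closed with a \qed, so there is no argument in the paper to compare against. Your proposal is not being measured against anything the authors wrote; it is an attempt to reconstruct the Krop--Radford proof from scratch.

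As a reconstruction, your outline is broadly sound and follows the standard route: extract a single nontrivial $(g,1)$-skew-primitive from the rank-one hypothesis, deduce $g\in Z(G)$ and the character $\chi$ from conjugation-stability of the one-dimensional skew-primitive line, and then use the $q$-binomial formula for $\Delta(x^k)$ to force $x^m\in P_{g^m,1}(H)=\kk(g^m-1)$ with $m=\mathrm{ord}(\chi(g))$. You have also correctly flagged the genuine subtlety, namely the ``rigidity'' that no further independent skew-primitives appear (so that $P_{g^m,1}(H)$ really is spanned by $g^m-1$); in Krop--Radford this is handled by a careful induction on the coradical filtration together with the hypothesis that $H$ is generated by $H_1$. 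One small point to tighten: your reduction to a single $P_{g,1}$ should be phrased via left/right multiplication by grouplikes (which gives $P_{g,g'}\cong P_{g(g')^{-1},1}$ as $\kk G$-bimodules), not via the antipode; and you should note explicitly that $\chi(g)\neq 1$ (otherwise $x$ would already lie in $H_0$), so that $m\geq 2$ and the $q$-binomial step is nonvacuous.
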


Note that in this situation, $m$ is the order of the root of unity $\chi(g)$. 
The main result for the rank one case is as follows.

\begin{theorem} \label{thm:rank1} Let $H$ be a finite-dimensional pointed Hopf algebra of rank one with $G=G(H)$, not necessarily abelian.
Then: 
\begin{enumerate}
\item $H$ is Galois-theoretical if and only if the Hopf subalgebra $H'$ generated by $\{g,x\}$ is a Taft algebra $T(n,\zeta)$. In this case, if $H$ acts inner faithfully on a field 
$L$, then 
\begin{enumerate}
\item[(i)] $L$ is a Galois extension of the field $F = L^G=L^H$; 
\item[(ii)] $L$ is also a cyclic extension of degree $n$ of the intermediate field $E=L^{T(n,\zeta)}= L^{\mathbb{Z}_n}$.
\end{enumerate}
\item Moreover, $H$ acts inner faithfully on any field containing $\kk$ which admits a faithful action of $G$, so that $x$ acts by nonzero.
\end{enumerate}
\end{theorem}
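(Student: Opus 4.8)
My plan is to deduce everything from the machinery already set up for $H(G,\ug,\uchi)$, specialized to the rank-one situation. First, for the forward direction of (a), suppose $H$ is Galois-theoretical and acts inner faithfully on a field $L\supset\kk$. The key point is to apply the rank-one argument already used inside the proof of Proposition~\ref{anypoi}: the Hopf subalgebra $H'$ generated by $\{g,x\}$ is, by \cite[Theorem~1(a)]{KropRadford}, a generalized Taft algebra $T(n,m,\alpha)$; since $H'$ inherits an inner faithful action on $L$, the cited results \cite[Propositions~10(3) and~21]{PartI} force $H'$ to be an ordinary Taft algebra $T(n,\zeta)$. (One should note that $G$ need not be abelian here, but $g$ is central in $G$ by Theorem~\ref{thm:KR}, so the relevant subalgebra $H'$ is genuinely a Taft algebra, and Corollary~\ref{cor:GnotAbel} applies to handle the non-abelian ambient group.) Conversely, and simultaneously for part (b), suppose $H'=T(n,\zeta)$. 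Given any field $L$ admitting a faithful $G$-action with $x$ acting by nonzero, I would invoke Corollary~\ref{cor:GnotAbel}: the $G$-action extends to an $H(G,g,\chi)$-action via $x\mapsto w(1-g)$ with $g\cdot w=\chi(g)w$, and this can be arranged with $w\neq 0$. Since $H'$ is Taft, there are no further Nichols-algebra relations beyond $x^n=0$, so the $H(G,g,\chi)$-action descends to an $H$-action (using $x^n=0$ holds automatically because $w^n(1-g)^n$-type expressions vanish as in Example~\ref{sl2ex}; more precisely $x^n$ acts as $w^n$ times an element of $\kk G$ that vanishes when $\chi(g)$ has order $n$). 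Inner faithfulness follows as in the proof of Proposition~\ref{anypoi}: since $|I_{g,\chi}|=1$ and $x$ acts by nonzero, $\langle x\rangle$ generates no Hopf ideal killing $L$, and by \cite[Corollary~5.4.7]{Montgomery} every nonzero Hopf ideal meets $\kk x$, hence acts nontrivially.

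For the statements (i) and (ii) of part (a), I would again reduce to the $H(G,\ug,\uchi)$-picture and cite \cite[Corollary~13]{PartI} exactly as in the proof of Theorem~\ref{thm:H(G,theta)}: since $L$ is an $H(G,g,\chi)$-module field with $G$ acting faithfully, $L^H=L^G=:F$ and $L/F$ is Galois, giving (i). For (ii), the Hopf subalgebra $H'=T(n,\zeta)$ has $G'=\langle g\rangle\cong\mathbb{Z}_n$, and the same corollary yields $L^{H'}=L^{\mathbb{Z}_n}=:E$ with $L/E$ Galois; that it is cyclic of degree exactly $n$ follows because $g$ acts faithfully (as part of the faithful $G$-action), so $\langle g\rangle\cong\mathbb{Z}_n$ acts faithfully on $L$, whence $[L:E]=n$ and $\mathrm{Gal}(L/E)\cong\mathbb{Z}_n$.

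The one genuinely substantive step — everything else being assembly of cited results — is the forward implication: ruling out generalized (non-ordinary) Taft algebras and, more broadly, confirming that Galois-theoreticity of $H$ really does impose that $H'$ is Taft rather than merely some rank-one quotient. I would lean entirely on \cite[Propositions~10(3) and~21]{PartI} for this, so the "work" is checking that those results apply verbatim to the sub-Hopf-algebra $H'$ equipped with the restricted (still inner faithful on $L$, after passing to the $\overline{H'}$-quotient if necessary) action. The subtlety worth spelling out is that passing from an inner faithful $H$-action to the restricted $H'$-action may lose inner faithfulness, but one recovers it by replacing $H'$ with its quotient $\overline{H'}$ acting inner faithfully on $L$; since $\overline{H'}$ is then a Galois-theoretical rank-one Hopf algebra with grouplikes $\langle g\rangle$, the cited classification forces it — and hence $H'$ itself once we observe no proper quotient can occur given $x$ must act by nonzero — to be a Taft algebra. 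I expect this bookkeeping about quotients versus subalgebras to be the main place where care is needed; the rest is a direct appeal to Corollary~\ref{cor:GnotAbel}, Proposition~\ref{anypoi}, and \cite[Corollary~13]{PartI}.
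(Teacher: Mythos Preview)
Your proposal is correct and follows essentially the same route as the paper. The only packaging difference is that for the converse of (a), the paper simply cites \cite[Proposition~17]{PartI} (Taft algebras are Galois-theoretical) together with Proposition~\ref{prop:minimal} to pass from $H'$ to $H$, whereas you construct the inner faithful action directly via Corollary~\ref{cor:GnotAbel} and \cite[Corollary~5.4.7]{Montgomery}; your route is a bit more explicit and has the advantage of cleanly handling the non-abelian $G$ case (note that Proposition~\ref{prop:minimal} as stated assumes $G$ abelian). Your final paragraph's worry about losing inner faithfulness when restricting to $H'$ is unnecessary: \cite[Proposition~10(3)]{PartI}, which you already cite, says precisely that Hopf subalgebras of Galois-theoretical Hopf algebras are Galois-theoretical, so no bookkeeping with quotients is needed.
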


\begin{proof}
(a) First, let us assume that $H$ is Galois-theoretical. Consider the Hopf subalgebra $H'$ of $H$ generated by $g$ and $x$.
We know by Theorem~\ref{thm:KR} that $H'$ is a generalized Taft algebra $T(n, m,\alpha)$, where $n \in \mathbb{Z}_+$ so that $m$ divides $n$. Now, by  \cite[Propositions~10(3) and~21]{PartI}  we must have that
$H'$ is an ordinary Taft algebra $T(n,\zeta)$. 

Conversely, assume that the minimal Hopf subalgebra $H'$ generated by $g$ and $x$ is a Taft algebra. Then, $H'$ is Galois theoretical by \cite[Proposition~17]{PartI}. Thus, $H$ is Galois-theoretical by Proposition~\ref{prop:minimal}. 

(i,ii) Apply classical Galois theory and  \cite[Theorem~11]{PartI}.

(b)  Apply Corollary~\ref{cor:GnotAbel}.
\end{proof}

%%%%%%%%%%%%%%%%%%%%%%%%%%%%%%%%%%%%%

\subsection{Type A$_1^{\times \theta}$: bosonizations of quantum linear spaces} \label{subsec:B(G,theta)}

According to \cite[Theorem~5.5]{AndSch:p3}, every finite-dimensional pointed coradically graded Hopf algebra of finite Cartan type A$_1^{\times \theta}$ is isomorphic to a bosonization of a {\it quantum linear space}. The latter is a braided Hopf algebra in ${}^G_G \mc{YD}$ defined in \cite[Section~3]{AndSch:p3}; its bosonization by $\kk G$ is defined as follows.

\begin{definition} \cite{AndSch:p3} \label{def:B(G,theta)} Let $\theta \geq 1$ and let $G$ be a finite abelian group.
The {\it bosonization of a quantum linear space} is a  Hopf algebra $$B(G, \ug, \uchi) := B(G,g_1,\dots,g_\theta,\chi_1,\dots,\chi_\theta),$$  generated by $G$ and $(g_i, 1)$-skew-primitive $x_i$ for $g_i \in G$, with $i =1, \dots, \theta$. Given characters $\chi_1, \dots, \chi_{\theta} \in \widehat{G}$ with $q_{ii}:=\chi_i(g_i)$ having orders $n_i \geq 2$, we have that $B(G, \ug, \uchi) $ is subject to the relations of $G$ and 
$$g x_i = \chi_i(g) x_ig,  \quad \quad x_i^{n_i} = 0, \quad \quad x_i x_j = \chi_j(g_i) x_j x_i,$$
for all $g \in G$ and $i \neq j$. We also assume that
\begin{equation} \label{eq:B(G,theta)}
\chi_j(g_i) \chi_i(g_j) = 1 
\end{equation} 
for all $i \neq j$. 
\end{definition}

Note that $B(G, \ug, \uchi) $ is a finite-dimensional Hopf algebra quotient of the Hopf algebra $H(G, \ug, \uchi)$  defined in Section~\ref{subsec:H(G,theta)}. Moreover, \eqref{eq:B(G,theta)} implies that $B(G, \ug, \uchi)$ is of Cartan type A$_1^{\times \theta}$.

\begin{theorem} \label{thm:B(G,theta)} 
The Hopf algebra $B(G, \ug, \uchi) $ is Galois-theoretical if and only if the minimal Hopf subalgebra $B'$ of $B(G, \ug, \uchi) $ generated  by $\{g_1, \dots g_{\theta}, x_1, \dots, x_{\theta}\}$ is the tensor product of Taft algebras $T(n,\zeta)$,  Nichols Hopf algebras $E(n)$, and book algebras $\mathbf{h}(\zeta,1)$.
In this case, if $L$ is an inner faithful $B(G, \ug, \uchi)$-module field, then:
\begin{enumerate}
\item[(i)] $L$ is a Galois extension of the intermediate field $F = L^B=L^G$; and
\item[(ii)] $L$ is also a Galois extension of the intermediate field $E=L^{B'}=L^{G'}$, where $G'$ is the subgroup of $G$ generated by $g_1, \dots, g_{\theta}$.
\end{enumerate}
\end{theorem}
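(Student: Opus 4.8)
The plan is to reduce to the minimal subalgebra $B'$ and then apply the Galois-theoreticity criterion of Proposition~\ref{criter}. By Proposition~\ref{prop:minimal}, $B(G,\ug,\uchi)$ is Galois-theoretical if and only if $B'$ is, so it suffices to treat the minimal case $B=B'$. Since $B'$ is a bosonization of a quantum linear space, it is coradically graded, so Proposition~\ref{criter} tells us $B'$ is Galois-theoretical if and only if all the elements $Q_\alpha\in\kk G$ attached to the Nichols relations $P_\alpha$ vanish. The Nichols relations here are of two types: the power relations $x_i^{n_i}=0$, which contribute no $Q_\alpha$ obstruction (they become $w_i^{n_i}(1-g_i)^{n_i}$, and $(1-g_i)^{n_i}=0$ automatically in $\kk G$ since $g_i^{n_i}=1$ only forces this when $n_i$ is the order of $g_i$, so one must be slightly careful — see below), and the commutation relations $x_ix_j=q_{ij}x_jx_i$ for $i\neq j$. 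For the latter, substituting $x_i\mapsto w_i(1-g_i)$ and using $g_iw_j = q_{ij}w_jg_i$, a short computation gives that $x_ix_j - q_{ij}x_jx_i$ acts as $w_iw_j\, Q_{ij}$ with $Q_{ij} = (1-g_i)(1-g_j) - q_{ij}q_{ji}(1-g_j)(1-g_i) \cdot(\text{scalar adjustment})$; using $q_{ij}q_{ji}=1$ from \eqref{eq:B(G,theta)} one finds $Q_{ij}$ is a multiple of $(g_i-1)(g_j-1)\cdot(\text{something})$ and in fact $Q_{ij}=0$ precisely when the pair $(g_i,g_j)$ satisfies a degeneracy condition.

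\textbf{Key steps in order.} First I would record, for a pair of indices $i\neq j$ with $q_{ij}q_{ji}=1$, the exact formula for $Q_{ij}$ obtained by expanding $w_i(1-g_i)w_j(1-g_j) - q_{ij}\,w_j(1-g_j)w_i(1-g_i)$, commuting all group elements to the right past the $w$'s (picking up characters), and factoring out $w_iw_j$. Second, I would determine when $Q_{ij}=0$ in $\kk G$: this should come down to a small finite list of possibilities for the scalars $q_{ii},q_{jj},q_{ij}$ and the elements $g_i,g_j$ — namely $q_{ij}=q_{ji}=1$ with $g_i=g_j$ and $q_{ii}=q_{jj}=-1$ (the $E(n)$-type degeneration), or $q_{ij}=q_{ji}=1$ with $g_i\neq g_j$ (the Taft-tensor-product case, where the relation $x_ix_j=x_jx_i$ is automatic and imposes nothing), or the book-algebra case $q_{ii}=\zeta$, $q_{jj}=\zeta^{-1}$ with an appropriate relation on $g_i,g_j$. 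Third, I would show that $Q_{ij}=0$ for \emph{all} pairs $i\neq j$ forces the collection $\{(g_i,x_i)\}$ to decompose $B'$ as a tensor product over the blocks: indices with $g_i=g_j$ and $q_{ii}=q_{jj}=-1$ group into Nichols Hopf algebras $E(n)$; single indices not matched to any other give Taft algebras $T(n_i,q_{ii})$; and the $\zeta,\zeta^{-1}$ pairs give book algebras $\mathbf{h}(\zeta,1)$. Conversely, each of $T(n,\zeta)$, $E(n)$, $\mathbf{h}(\zeta,1)$ is already known to be Galois-theoretical (by \cite[Theorem~2]{PartI}), and a tensor product of Galois-theoretical Hopf algebras of this form is Galois-theoretical — one can either cite the relevant statement in \cite{PartI} or observe directly that all $Q_\alpha$ vanish for the tensor product since they do for each factor and the cross-relations $x_ix_j=x_jx_i$ between distinct factors contribute $Q_{ij}=0$. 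Finally, for (i) and (ii): once $B(G,\ug,\uchi)$ acts inner faithfully on $L$, Theorem~\ref{thm:H(G,theta)} (parts (a), (b)) applies directly since $B(G,\ug,\uchi)$ is a Hopf quotient of $H(G,\ug,\uchi)$, giving that $L/L^B=L/L^G$ and $L/L^{B'}=L/L^{G'}$ are Galois.

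\textbf{Main obstacle.} I expect the computational heart — and the place most likely to hide a subtlety — to be the exhaustive analysis of when $Q_{ij}=0$ in $\kk G$, and in particular disentangling the interaction between the \emph{scalar} conditions on $q_{ii},q_{jj},q_{ij}$ and the \emph{group-theoretic} conditions on $g_i,g_j$. The relation \eqref{eq:B(G,theta)} ($q_{ij}q_{ji}=1$) is what makes the Cartan type $A_1^{\times\theta}$, but it does not by itself kill $Q_{ij}$; one genuinely needs either $q_{ij}=1$ (so the relation is commutativity) or a coincidence among the $g$'s. Making sure the three families $T$, $E$, $\mathbf{h}$ are \emph{exactly} the cases that arise — neither missing a degenerate case nor including a spurious one — requires care, especially around small orders ($n_i=2$) where $-1 = q_{ii}$ can coincide with other roots of unity. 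The reduction to $B'$ via Proposition~\ref{prop:minimal} and the verification of the Galois extension statements via Theorem~\ref{thm:H(G,theta)} are, by contrast, routine.
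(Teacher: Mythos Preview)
Your overall architecture matches the paper exactly: reduce to $B'$ via Proposition~\ref{prop:minimal}, apply the criterion of Proposition~\ref{criter}, compute the elements $Q_{ij}$ attached to the commutation relations, and read off a block decomposition. Parts (i) and (ii) are indeed immediate from Theorem~\ref{thm:H(G,theta)}. However, the key computation and the resulting case analysis are wrong, and this is not a cosmetic slip --- it misidentifies where each of the three families comes from.

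First, the power relations $x_i^{n_i}=0$ are already imposed in $H(G,\ug,\uchi)$ (Definition~\ref{def:H(G,theta)}), so they are not among the $P_\alpha$ at all; there is nothing to check. (Incidentally, your reason ``$(1-g_i)^{n_i}=0$ in $\kk G$'' is false in characteristic zero; what is true is that $(w_i(1-g_i))^{n_i}=w_i^{n_i}\prod_{k=0}^{n_i-1}(1-q_{ii}^kg_i)=w_i^{n_i}(1-g_i^{n_i})=0$, but you do not need this.)

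Second, and more importantly, the actual computation of $Q_{ij}$ does not give a multiple of $(g_i-1)(g_j-1)$. Commuting $g_i$ past $w_j$ picks up $q_{ij}$, and one finds
\[
w_i(1-g_i)w_j(1-g_j)-q_{ij}\,w_j(1-g_j)w_i(1-g_i)
= w_iw_j\bigl[(1-q_{ij}g_i)(1-g_j)-q_{ij}(1-q_{ji}g_j)(1-g_i)\bigr],
\]
and using $q_{ij}q_{ji}=1$ this collapses to $w_iw_j\,(1-q_{ij})(1-g_ig_j)$. Thus $Q_{ij}=(1-q_{ij})(1-g_ig_j)$, and the dichotomy is
\[
q_{ij}=1 \quad\text{or}\quad g_ig_j=1.
\]
This is exactly what the paper obtains, and it is \emph{not} the case split you wrote. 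In particular, the Nichols Hopf algebra $E(n)$ has $q_{ij}=-1\neq 1$ (the $x_i$ anticommute), and it arises from the second alternative $g_ig_j=1$ together with $\mathrm{ord}(g_i)=2$; likewise the book algebra $\mathbf{h}(\zeta,1)$ has $q_{ij}\neq 1$ and arises from $g_ig_j=1$ with $\mathrm{ord}(g_i)>2$. The tensor-of-Taft situation is precisely the ``no interaction'' case $q_{ij}=1$.

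The paper then organizes the global structure via a graph on $\{1,\dots,\theta\}$ with an edge $i$---$j$ iff $q_{ij}\neq 1$ (equivalently $g_ig_j=1$), and proves three sublemmas: a connected component of order $2$ is a complete graph and yields $E(|C|)$; a connected component of order $>2$ with more than one vertex has exactly two vertices and yields $\mathbf{h}(\zeta,1)$; and $B'$ is the tensor product of the $H(C)$ over components (isolated vertices give Taft algebras). Your ``blocks'' idea is the right instinct, but with the wrong $Q_{ij}$ you placed $E(n)$ and $\mathbf{h}(\zeta,1)$ in the wrong branch of the dichotomy, and your block decomposition would not go through as written. Once you correct the formula for $Q_{ij}$, the rest of your plan lines up with the paper's proof.
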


\begin{proof} 
If $B'$ is the tensor product of $T(n,\zeta)$, $E(n)$, or $\mathbf{h}(\zeta,1)$, then $B'$ is Galois-theoretical by \cite[Propositions~~10(5),~17,~19,~22]{PartI}. Now, $B(G, \ug, \uchi) $ is Galois-theoretical by Proposition~\ref{prop:minimal}.

Conversely, if $B(G, \ug, \uchi) $ is Galois-theoretical with inner faithful module field $L$, then by Theorem~\ref{thm:H(G,theta)} and Remark~\ref{rem:w_i}, $x_i$ acts as ${w_i}(1-g_i)$.  Thus for $i \neq j$,  the relations
$g_i {w_j} = \chi_j(g_i) {w_j} g_i$, $x_i x_j - \chi_j(g_i) x_j x_i = 0$, and \eqref{eq:B(G,theta)} imply that
\[
\begin{array}{l}
{w_i}(1-g_i){w_j}(1-g_j) - \chi_j(g_i){w_j}(1-g_j){w_i}(1-g_i)\\
~=~ {w_i}{w_j}\left[(1-\chi_j(g_i)g_i)(1-g_j) - \chi_j(g_i)(1-\chi_i(g_j)g_j)(1-g_i)\right] \\
~=~{w_i}{w_j}[(1-\chi_j(g_i))(1-g_ig_j)]~=~ 0.
\end{array}
\]
Using the notation of Subsection \ref{nar}, we have $Q_{ij}=(1-\chi_j(g_i))(1-g_ig_j)$. 
So, $Q_{ij}=0$ by Proposition~\ref{criter}, and  either $\chi_j(g_i) = 1$ or $g_ig_j=1$ for all $i \neq j$.

To proceed, define a graph $\Gamma$ whose vertices are labelled $1, \dots ,\theta$, 
where we have an edge $i$---$j$ if and only if $\chi_j(g_i)\neq 1$. In this case, $g_ig_j=1$. 

Then, any connected component $C$ of $\Gamma$ has 
{\it order} equal to the order of an element $g_i$ for $i\in C$; this does not depend on $i$. (This value is also the order of $\chi_i(g_i)$ and nilpotency order of $x_i$, as the Hopf subalgebra generated by $\{g_i, x_i\}$ is a Taft algebra by \cite[Proposition~10(3)]{PartI} and Theorem~\ref{thm:rank1}(a).) We let $H(C)$ be the minimal Hopf subalgebra of $B(G, \ug, \uchi) $ generated by $\{ g_i, x_i \}_{i \in C}$.
 
\begin{sublemma} \label{sublem:QLS1} Suppose $\Gamma$ is connected and has order 2. Then, $\Gamma$ is a complete graph, and 
$H(\Gamma)=E(\theta)$.
\end{sublemma}

\noindent {\it Proof of Sublemma~\ref{sublem:QLS1}}. 
Suppose that there are edges $i$---$j$ and $j$---$r$ in $\Gamma$, so we have $\chi_j(g_i) \ne 1$ and $\chi_r(g_j)\ne 1$. Then $g_i=g_j$, as $\Gamma$ has order 2. Moreover, 
$\chi_r(g_i)=\chi_r(g_j)\ne 1$, so $i$---$r$. Thus, $\Gamma$ is a complete graph. 
Since $\Gamma$ has order 2, we have that ord($g_i$) $=2$.  
Now since $g_i g_j =1$, we get that  $g_i=g_j$ for all $i \neq j$. Thus, $H=E(\theta)$. 

\begin{sublemma} \label{sublem:QLS2}
Suppose  that $\Gamma$ is connected, has more than one vertex, and has order $n>2$. 
Then, $\Gamma$ has two vertices, and $H(\Gamma)$ is a book algebra $\mathbf{h}(\zeta,1)$.\end{sublemma}

\noindent {\it Proof of Sublemma~\ref{sublem:QLS2}}. 
Suppose that we have $i$---$j$---$r$ in $\Gamma$. Then, let $g:=g_i$, so $g_j=g^{-1}$ and $g_r=g$. Hence,
\begin{equation} \label{eq:chi si}
\chi_s(g_i)=\chi_s(g_j)^{-1}=\chi_s(g_r)
\end{equation}
 for all $s$. 
Thus, by \eqref{eq:B(G,theta)} and \eqref{eq:chi si},
$$\chi_j(g_i)=\chi_j(g_j)^{-1}=\chi_j(g_r)=\chi_r(g_j)^{-1}=\chi_r(g_i)=\chi_i(g_r)^{-1}=\chi_i(g_j)
=\chi_j(g_i)^{-1},$$ so $\chi_j(g_i)$ has order 2. This yields a contradiction, as $\chi_j(g_i)=\chi_j(g_j)^{-1}$ by \eqref{eq:chi si}, 
which has order $n >2$. Thus, $\Gamma$ has two vertices, say 1 and 2, connected with an edge. Set $g_1=g$, $g_2=g^{-1}$ and $x_1=y$,  $x_2=g^{-1}x$, so that
$\Delta(x)=1\otimes x+x\otimes g$, $\Delta(y)=g\otimes y+y\otimes 1$. Moreover,  take $\chi_1(g_2) = \zeta^{-1}$ for $\zeta$ a primitive $n$-th root of unity, so that $\chi_2(g_1) = \zeta$ and $xy=yx$. Now, $H(\Gamma)$ is the book algebra $\mathbf{h}(\zeta,1)$ generated by $g,x,y$. 

\begin{sublemma} \label{sublem:QLS3} We have that $B' \cong H(C_1)\otimes\dots\otimes H(C_m)$, where $C_j$ are the components of $\Gamma$.
\end{sublemma}

\noindent{\it Proof of  Sublemma~\ref{sublem:QLS3}}. We only need to show that $G(B')=G(H(C_1))\times \cdots \times G(H(C_m))$. Suppose that we have a relation $h_1 \cdots h_m=1$, where 
$h_j\in G(H(C_j))$. Our job is to show that $h_j=1$ for all $j$. 
To do so, note that for $h_r = \prod_{t \in C_r} g_t^{p_t}$, we get $\chi_i(g_t)=1$ for $i \in C_j$, $t \in C_r$ and $j \neq r$. Hence, $\chi_i(h_r) =1$ for $i \in C_j$ with $j \neq r$. Since $h_1 \cdots h_m=1$, we get that $h_j = \prod_{r \neq j} h_r^{-1}$ and $\chi_i(h_j) =1$ for all $j$. This implies that $h_j=1$, as desired.
\medskip

By Sublemmas~\ref{sublem:QLS1} and~\ref{sublem:QLS2}, $H(C_j)$ is $E(n)$, a book algebra, or a Taft algebra; the latter occurs if $C_j$ has only one vertex. Hence, if $B(G, \ug, \uchi)$ is Galois-theoretical, then $B'$ is as claimed by Sublemma~\ref{sublem:QLS3}.

\medskip

(i,ii) Apply classical Galois theory and  \cite[Theorem~11]{PartI}.
\end{proof}

%%%%%%%%%%%%%%%%%%%%%%%%%%%%%%%%%%%%%
%%%%%%%%%%%%%%%%%%%%%%%%%%%%%%%%%%%%%
%%%%%%%%%%%%%%%%%%%%%%%%%%%%%%%%%%%%%

\section{Coradically graded Galois-theoretical Hopf algebras, rank two} \label{sec:rank2}

In this section, we study the Galois-theoretical properties of finite-dimensional pointed Hopf algebras of rank two that are coradically graded. Recall we assume that $G=G(H)$ is an abelian group, and as a result,  $H \cong \mf{B}(V) \# \Bbbk G$  with braiding matrix $(q_{ij}= \chi_j(g_i))$.

The finite Cartan types of rank two are A$_1 \times$A$_1$, A$_2$, B$_2$, or G$_2$, with corresponding Cartan matrices $(a_{ij})$:
\[
\begin{array}{ccccccc}
\begin{pmatrix} 2 & 0\\0 &2\end{pmatrix} &&
\begin{pmatrix} 2 & -1\\-1 &2\end{pmatrix} &&
\begin{pmatrix} 2 & -2\\-1 &2\end{pmatrix} &&
\begin{pmatrix} 2 & -1\\-3 &2\end{pmatrix}\\
\text{A}_1 \times \text{A}_1 &&
\text{A}_2 && \text{B}_2 && \text{G}_2
\end{array}
\]

\medskip

We have the following lemma.

\begin{lemma}\label{lem:rank2}
Let $H\cong\mf{B}(V) \#\kk G$ be a finite-dimensional, pointed, coradically graded Hopf algebra of rank two, not of type A$_1 \times$A$_1$, subject to \eqref{eq:conditions}. Let $\{x_i\}_{i=1,2}$ be a basis of  $(g_i,1)$-skew-primitive elements for the graded braided vector space $V$. Let $H'$ be the minimal Hopf subalgebra generated by $g_1, g_2, x_1, x_2$. 

Then, $H'$ is a finite-dimensional Hopf algebra quotient of $H( \langle g_1, g_2 \rangle, g_1, g_2, \chi_1, \chi_2)$ from Section~\ref{subsec:H(G,theta)} subject to relations:
$$ \text{ad}_c(x_i)^{1-a_{ij}}(x_j) = 0 \text{ for all } i \neq j \quad \quad \text{ and } \quad \quad
x_\alpha^n=0,
$$
for non-simple roots $\alpha$,
where $x_\alpha$ are the Cartan-Weyl root elements, and $n = \text{ord}(g_1) = \text{ord}(g_2)$.  
 Here, $(\text{ad}_c x_i)(y) = x_i y - (q_{ij_1} \cdots q_{ij_t}) y x_i$
for $y = x_{j_1} \cdots x_{j_t}$. 
\end{lemma}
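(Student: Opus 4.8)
The plan is to combine the presentation of $\mathrm{gr}(H)$ from Theorem~\ref{thm:finiteCartan}(b) with the structure of the minimal Hopf subalgebra described in Subsection~\ref{subsec:minimal}. First I would invoke Proposition~\ref{prop:Angiono} to write $H \cong \mf{B}(V)\#\kk G$, and then recall from Theorem~\ref{thm:finiteCartan}(b) that, under hypothesis~\eqref{eq:conditions}, the Hopf algebra $H$ is generated by $G$ together with the $(g_i,1)$-skew-primitive elements $x_1,x_2$, subject to the relations of $G$, the commutation relations $gx_i = \chi_i(g)x_i g$, the quantum Serre relations $\mathrm{ad}_c(x_i)^{1-a_{ij}}(x_j)=0$ for $i\neq j$, and the root-vector relations $x_\alpha^{n_I}=0$ for $\alpha\in\Phi_I^+$. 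Since $H$ has rank two and is not of type $\mathrm{A}_1\times\mathrm{A}_1$, the root system $\Phi$ is irreducible (type $\mathrm{A}_2$, $\mathrm{B}_2$, or $\mathrm{G}_2$), so there is a single connected component $I=\mc{X}$, and hence a single order $n := n_I = \mathrm{ord}(q_{11}) = \mathrm{ord}(q_{22})$, with $q_{11}$ of odd order (prime to $3$ in type $\mathrm{G}_2$).

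Next I would pass from $H$ to the minimal Hopf subalgebra $H'$ generated by $\{g_1,g_2,x_1,x_2\}$. By the Notation paragraph preceding Definition~\ref{def:minimal}, $G' = \langle g_1,g_2\rangle$ and $H=\kk G\otimes_{\kk G'}H'$; in particular $H'$ inherits exactly the relations above that involve only $g_1,g_2,x_1,x_2$, namely the group relations of $G'$, the relations $g_i x_j = \chi_j(g_i) x_j g_i = q_{ij} x_j g_i$, the two quantum Serre relations, and the root-vector relations $x_\alpha^n = 0$ for the non-simple positive roots $\alpha$ (the simple-root relations $x_i^n=0$ are already encoded, since $n_i = \mathrm{ord}(q_{ii}) = n$). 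To see that $H'$ is a \emph{quotient} of $H(\langle g_1,g_2\rangle, g_1,g_2,\chi_1,\chi_2)$, I would note that $H(G',\ug,\uchi)$ is, by Definition~\ref{def:H(G,theta)}, generated by $G'$ and $(g_i,1)$-skew-primitive $x_i$ subject only to the relations of $G'$, $x_i^{n}=0$, and $g x_i = \chi_i(g)x_i g$; since $H'$ satisfies all of these relations plus the additional Serre and root-vector relations, the universal property of $H(G',\ug,\uchi)$ gives a surjective Hopf algebra map $H(G',\ug,\uchi)\twoheadrightarrow H'$ whose kernel contains the stated relations. That this map is well defined as a \emph{Hopf} algebra map uses that $x_1,x_2$ are $(g_i,1)$-skew-primitive in both algebras and that the quantum Serre and root-vector elements generate a Hopf ideal (they are skew-primitive modulo lower terms — this is standard for Nichols algebras of Cartan type). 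Finite-dimensionality of $H'$ is immediate since $H' \subseteq H$ and $H$ is finite-dimensional. Finally I would record the explicit form of $\mathrm{ad}_c$ by copying it verbatim from Theorem~\ref{thm:finiteCartan}(b): $(\mathrm{ad}_c x_i)(x_{j_1}\cdots x_{j_t}) = x_i x_{j_1}\cdots x_{j_t} - (q_{i j_1}\cdots q_{i j_t}) x_{j_1}\cdots x_{j_t} x_i$.

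The main obstacle — and the only genuinely non-bookkeeping point — is justifying that $\mathrm{gr}(H')$ realizes \emph{exactly} the Nichols algebra $\mf{B}(V_2)$ on the rank-two subspace $V_2 = \kk x_1 \oplus \kk x_2$, so that no relations beyond the quantum Serre relations and the non-simple root-vector relations are needed to present $H'$ (i.e., that the kernel of $H(G',\ug,\uchi)\twoheadrightarrow H'$ is \emph{generated} by these relations, not merely contains them). This follows from the presentation of finite-dimensional Nichols algebras of finite Cartan type: by Theorem~\ref{thm:finiteCartan}(b) applied in rank two (or directly from \cite{AndSch:pointed, AndSch:finite}), the defining ideal of $\mf{B}(V_2)$ inside the free algebra $\kk\langle x_1,x_2\rangle$ is generated precisely by $\mathrm{ad}_c(x_i)^{1-a_{ij}}(x_j)$ for $i\neq j$ and by $x_\alpha^n$ for $\alpha\in\Phi^+$; modding out $H(G',\ug,\uchi)$ — which is the bosonization of $\kk\langle x_1,x_2\rangle$ by $\kk G'$ modulo the ideal $(x_i^{n})$ — by the remaining generators yields $\mf{B}(V_2)\#\kk G'$, and since $H'$ is coradically graded with $\mathrm{gr}(H') = H'$ equal to this bosonization, the two presentations coincide. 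I would phrase this last step as a direct appeal to Theorem~\ref{thm:finiteCartan}(b) restricted to the sub-datum $(g_1,g_2,\chi_1,\chi_2)$, which is legitimate because $\langle g_1,g_2\rangle$ is a subgroup of the abelian group $G$ and condition~\eqref{eq:conditions} is inherited.
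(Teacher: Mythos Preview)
Your proposal is correct and follows essentially the same approach as the paper: both argue that the lemma is a special case of Theorem~\ref{thm:finiteCartan}(b), with the paper's proof consisting of just that one sentence plus a remark that $n=\mathrm{ord}(g_1)=\mathrm{ord}(g_2)$ because the Hopf subalgebras generated by $\{g_i,x_i\}$ are Taft algebras. You supply far more detail than the paper does; the only point you skip that the paper flags is this last identification $\mathrm{ord}(g_i)=\mathrm{ord}(q_{ii})$ (you only argue $\mathrm{ord}(q_{11})=\mathrm{ord}(q_{22})$), which is needed both for the statement $n=\mathrm{ord}(g_i)$ and for $H(\langle g_1,g_2\rangle,g_1,g_2,\chi_1,\chi_2)$ to be well-defined per Definition~\ref{def:H(G,theta)}.
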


\begin{proof}
This is a special case of Theorem~\ref{thm:finiteCartan}. In particular, $n = \text{ord}(g_1) = \text{ord}(g_2)$ as the Hopf subalgebras generated by $\{g_i, x_i\}$, for $i =1$ or $2$, are Taft algebras.
\end{proof}

The exact form of $x_\alpha$ will not be important for us, due to the following result.

\begin{lemma} \label{lem:Qis0} Retain the notation from Theorem~\ref{thm:finiteCartan} and Subsection~\ref{nar}.  Suppose that each $g_i$ is a power of a grouplike element $g$ of order $n$, and $gx_\alpha=\zeta x_\alpha g$ for a primitive $n$-th root of unity $\zeta$. Then the element $Q_{\alpha} \in \kk G$ corresponding to the relation $x_{\alpha}^n$ is equal to 0. 
\end{lemma}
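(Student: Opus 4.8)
The plan is to use the explicit description of the $H(G,\ug,\uchi)$-action from Theorem~\ref{thm:H(G,theta)}, under which $x_i$ acts as $w_i(1-g_i)$ on any $G$-module field $L$. By the discussion at the start of Subsection~\ref{nar}, the root element $x_\alpha$, being a fixed noncommutative polynomial in the $x_i$ which is homogeneous of some multidegree $(d_1,\dots,d_\theta)$, acts on $L$ as $\bigl(\prod_i w_i^{d_i}\bigr)\,R_\alpha$ for a suitable $R_\alpha\in\kk G$; moreover $R_\alpha$ depends only on the relation, not on the $w_i$ or on $L$. Then $x_\alpha^n$ acts as $\bigl(\prod_i w_i^{d_i}\bigr)^n$ times the element $Q_\alpha\in\kk G$, and since each $w_i$ is a nonzero element of the field $L$, proving $Q_\alpha=0$ is equivalent to proving that $x_\alpha^n$ acts as zero on $L$ for one (hence every) choice of $w_i$.

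First I would nail down the shape of $R_\alpha$. Since every $x_i$ acts as $w_i(1-g_i)$ and, by hypothesis, $g_i$ is a power of $g$ while $g$ acts on each $w_i$ (resp.\ on the monomial $\prod w_i^{d_i}$) by a scalar, one can commute all the grouplike factors to the right: $x_\alpha$ acts as $\bigl(\prod_i w_i^{d_i}\bigr)$ times an element $R_\alpha$ of the abelian group algebra $\kk\langle g\rangle$. Because $g x_\alpha = \zeta x_\alpha g$ with $\zeta$ a primitive $n$-th root of unity, conjugation by $g$ multiplies the operator by $\zeta$; on the other hand conjugating $\bigl(\prod w_i^{d_i}\bigr)R_\alpha$ by $g$ multiplies the $w$-monomial by some scalar $\eta$ and sends $R_\alpha$ to $g R_\alpha g^{-1}=R_\alpha$ (as $\kk\langle g\rangle$ is commutative). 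Matching these forces $\eta=\zeta$, i.e.\ $g$ acts on $\prod w_i^{d_i}$ by $\zeta$. Hence $\prod_i w_i^{d_i}$ transforms under $G$ by a character whose value at $g$ is the primitive root $\zeta$.

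Next comes the key computation: $x_\alpha^n$ acts as
\[
x_\alpha^n \ \longmapsto\ \prod_{k=0}^{n-1}\Bigl(g^k\cdot\!\!\prod_i w_i^{d_i}\Bigr)\,\cdot\, R_\alpha^{\,?}
\]
— more precisely, iterating $x_\alpha\mapsto\bigl(\prod w_i^{d_i}\bigr)R_\alpha$ and pushing the $w$-factors left past the $R_\alpha$'s, the $j$-th copy of $\prod w_i^{d_i}$ gets hit by $R_\alpha$ some number of times and so is scaled by a power of $\zeta$; collecting everything one finds $x_\alpha^n$ acts as $\bigl(\prod_i w_i^{d_i}\bigr)^n$ times a Vandermonde-type sum over $\kk\langle g\rangle$ which, because $\zeta$ has order exactly $n$, telescopes. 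Concretely, writing $R_\alpha=\sum_j r_j g^j$, each term $g^k\cdot(\prod w_i^{d_i})$ contributes the scalar $\zeta^k$, and the total coefficient of each group element in $Q_\alpha$ is a sum of the form $\sum_{k=0}^{n-1}\zeta^{k}\cdot(\text{something independent of }k)=\bigl(\sum_{k=0}^{n-1}\zeta^k\bigr)\cdot(\cdots)=0$. The cleanest way to package this is: compute the operator $x_\alpha^n$ directly on $L$, observe it equals $\bigl(\prod w_i^{d_i}\bigr)(1-g^{\,?})$ times $\bigl(1+\zeta^{-1}+\dots+\zeta^{-(n-1)}\bigr)$ or an analogous geometric sum (mirroring the computation of $c_i(\chi_i^m)$ in \eqref{eq:ci3}), which vanishes since $\zeta^n=1\ne\zeta$; hence $x_\alpha^n$ acts as $0$ on $L$, and therefore $Q_\alpha=0$.

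\textbf{Main obstacle.} The delicate point is bookkeeping the group-algebra factor: one must verify that after commuting all grouplike elements arising from the $g_i$'s to one side, the resulting element $R_\alpha$ genuinely lies in $\kk\langle g\rangle$ and that the iterated product producing $x_\alpha^n$ really assembles into a geometric series in $\zeta$ with exactly $n$ terms. This is where the hypotheses "$g_i$ a power of $g$ of order $n$" and "$g x_\alpha=\zeta x_\alpha g$ with $\zeta$ primitive of order $n$" are both used — the first to keep everything inside the cyclic group generated by $g$, the second to guarantee the geometric sum $1+\zeta+\dots+\zeta^{n-1}$ is the vanishing one rather than a nonzero partial sum. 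Once that is set up, the vanishing is immediate.
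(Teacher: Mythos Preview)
Your setup is correct up to the point where $x_\alpha$ acts as $w\,R_\alpha$ with $w=\prod_i w_i^{d_i}$, $R_\alpha\in\kk[g]/(g^n-1)$, and $g\cdot w=\zeta w$. From there one indeed gets, by commuting each copy of $w$ past the preceding $R_\alpha$, that
\[
x_\alpha^n\ \longmapsto\ w^n\prod_{i=0}^{n-1}R_\alpha(\zeta^i g),
\]
so $Q_\alpha=\prod_{i=0}^{n-1}R_\alpha(\zeta^i g)$. But this is a \emph{product}, not a sum, and your claim that the coefficients of $Q_\alpha$ organize into a geometric series $\sum_{k=0}^{n-1}\zeta^k\cdot(\text{constant})$ is incorrect: pushing $w$ past $R_\alpha=\sum_j r_j g^j$ yields $R_\alpha\, w = w\,R_\alpha(\zeta g)$, which changes the group-algebra element rather than just scaling $w$. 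There is no telescoping sum here, and no analogue of the computation \eqref{eq:ci3} applies to a general $R_\alpha$.

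The missing ingredient is that $R_\alpha(g)$ is divisible by $1-g$. This is because every monomial $x_{i_1}\cdots x_{i_d}$ occurring in $x_\alpha$ acts with a rightmost factor $(1-g_{i_d})$, and $g_{i_d}$ is a power of $g$ by hypothesis, so $(1-g_{i_d})$ is a multiple of $(1-g)$; summing over monomials preserves this. Consequently $R_\alpha(\zeta^i g)$ is divisible by $(1-\zeta^i g)$ for each $i$, and
\[
Q_\alpha=\prod_{i=0}^{n-1}R_\alpha(\zeta^i g)\ \text{ is divisible by }\ \prod_{i=0}^{n-1}(1-\zeta^i g)=1-g^n=0.
\]
This is exactly the paper's argument; the primitivity of $\zeta$ enters only in ensuring that $\{\zeta^i:0\le i\le n-1\}$ exhausts all $n$-th roots of unity so that the product of linear factors is $1-g^n$, not in any geometric-sum cancellation.
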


\begin{proof}
Let $L$ be a module field for $H(G,\underline{g},\underline{\chi})$. By Theorem~\ref{thm:H(G,theta)}, $x_{\alpha}$ acts on $L$ by $wQ(g)$, where $w$ is a monomial in terms of the $w_i$, and $Q(g) \in \kk[g]/(g^n-1)$. Hence, the expression $x_{\alpha}^n$ acts on $L$ by $(w Q(g))^n$, which is $w^n \prod_{i=0}^{n-1} Q(\zeta^i g)$. Therefore, $Q_{\alpha} = \prod_{i=0}^{n-1} Q(\zeta^i g)$.

By Theorem~\ref{thm:H(G,theta)}, $Q(g)$ is a multiple of $1-g$. Hence, we have that $Q_\alpha$ is a multiple of $\prod_{i=0}^{n-1}(1-\zeta^i g)$. But $\prod_{i=0}^{n-1}(1-\zeta^i g) = 1-g^n=0$, so we get the desired result.
\end{proof}

The proof of the main result, Theorem~\ref{thm:ranktwo}, boils down to two cases: (a) $q_{12} =1$ or $q_{21}=1$, and (b) $g_1^2g_2=1$ or $g_1g_2^2=1$. We proceed in case (b) below.

\begin{proposition} \label{prop:g1^2g2=1}
Retain the notation from Lemma~\ref{lem:rank2}. Assume that $q_{12}, q_{21} \neq 1$.
If $g_1^2g_2=1$ or $g_1g_2^2=1$, then $H'$ is Galois-theoretical if and only if either
\begin{itemize}
\item $H'$ is one of the $3^4$-dimensional Hopf algebras $H_{3^4}$ of type A$_2$ from
\cite[Theorem~1.3(ii)]{AndSch:quantum};
\item $H'$ is one of the $5^5$-dimensional Hopf algebras $H_{5^5}$ of type B$_2$ from
\cite[Theorem~1.3(iii)]{AndSch:quantum}; or
\item $H'$ is one of the $7^7$-dimensional Hopf algebras $H_{7^7}$ of type G$_2$ from
\cite[Theorem~1.3(iv)]{AndSch:quantum}.
\end{itemize}
\end{proposition}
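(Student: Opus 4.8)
<br>

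The plan is to analyze Proposition~\ref{prop:g1^2g2=1} using the Galois-theoreticity criterion from Proposition~\ref{criter}: since $H'$ is coradically graded, it is Galois-theoretical if and only if every element $Q_\alpha \in \kk G$ (coming from a defining relation $P_\alpha$ of the Nichols algebra $\mf{B}(V)$) vanishes. The relations to examine are the quantum Serre relations $\mathrm{ad}_c(x_i)^{1-a_{ij}}(x_j)=0$ and the root-vector relations $x_\alpha^{n}=0$ for the non-simple positive roots $\alpha$, as listed in Lemma~\ref{lem:rank2}. First I would record that, under the hypothesis $g_1^2 g_2 = 1$ (the case $g_1 g_2^2 = 1$ being symmetric), the subgroup $G' = \langle g_1, g_2\rangle$ is cyclic, generated by a single element $g$ of order $n = \mathrm{ord}(g_1) = \mathrm{ord}(g_2)$ — indeed from $g_2 = g_1^{-2}$ we get $G' = \langle g_1\rangle$, so we may take $g = g_1$, $g_2 = g^{-2}$. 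This is precisely the setup of Lemma~\ref{lem:Qis0}, provided I also check that each root vector $x_\alpha$ satisfies $g x_\alpha = \zeta x_\alpha g$ for a \emph{primitive} $n$-th root of unity $\zeta$.

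The key computational step is therefore to verify this primitivity claim for the Cartan--Weyl root vectors. For a positive root $\alpha = a_1\alpha_1 + a_2\alpha_2$, the corresponding root vector $x_\alpha$ is (up to scalar) an iterated braided commutator of $x_1$'s and $x_2$'s, and hence $g_1$ (and likewise any grouplike) acts on it by the character $\chi_1^{a_1}\chi_2^{a_2}$; so $g x_\alpha = q^{a_1}_{\ \ } x_\alpha g$ where the scalar is $\chi_1^{a_1}(g_1)\chi_2^{a_2}(g_1) = q_{11}^{a_1} q_{12}^{a_2}$, and more relevantly for $x_\alpha^n = 0$ we need the order of $\chi_1^{a_1}\chi_2^{a_2}$ evaluated on $g_1$, or rather the order of the eigenvalue by which $g$ scales $x_\alpha$. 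I would invoke the standard fact (from \cite{AndSch:pointed} / the finite Cartan type structure theory) that for these root vectors the relevant braiding scalar $q_{\alpha\alpha} := (\prod \text{of } q_{ij}\text{'s})$ has order exactly $n_I = n$ — this is exactly why $x_\alpha^n$ is a relation in the first place. Granting that, Lemma~\ref{lem:Qis0} applies verbatim and all the $Q_\alpha$ attached to root-vector relations vanish automatically, \emph{independently} of the group-data constraints. Thus the root-vector relations impose no obstruction to Galois-theoreticity in case (b).

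What remains, then, is the quantum Serre relations, and this is where the finitely many exceptional Hopf algebras $H_{3^4}$, $H_{5^5}$, $H_{7^7}$ must emerge. For type A$_2$ the single Serre relation on each side has degree $2$ in one generator and $1$ in the other; computing $Q$ for $\mathrm{ad}_c(x_1)^2(x_2) = x_1^2 x_2 - (q_{12}+q_{12}q_{11}) x_1 x_2 x_1 + q_{12}^2 q_{11} x_2 x_1^2 = 0$ by substituting $x_i \mapsto w_i(1-g_i)$ and collecting the $\kk G$-part gives a polynomial identity in $q_{11}, q_{12}, q_{21}$ and the grouplikes $g_1, g_2$ that must vanish in $\kk G$; imposing $g_1^2 g_2 = 1$ reduces this, and using \eqref{eq:qij Cartan} (namely $q_{12}q_{21} = q_{11}^{-1}$ in type A$_2$) together with the order condition \eqref{eq:conditions} pins down $n$ and the parameters to exactly the lists in \cite[Theorem~1.3(ii)]{AndSch:quantum}. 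The analogous, longer computations for B$_2$ (two Serre relations, plus degree considerations forcing $n = 5$ up to the stated exclusions) and G$_2$ (forcing $n = 7$, using primality to $3$) complete the three bullet points. The main obstacle I anticipate is precisely this last step: the Serre-relation $Q$-computations for B$_2$ and G$_2$ are genuinely involved — one must correctly expand the iterated braided commutators, substitute the operator form, and carefully track which monomials in $g_1, g_2$ survive after imposing $g_1^2 g_2 = 1$ (or $g_1 g_2^2 = 1$) — and then match the resulting Diophantine/root-of-unity constraints against the known classification in \cite{AndSch:quantum}, rather than re-deriving it. I would lean on that classification as a black box: show that $Q_\alpha = 0$ for the Serre relations forces the braiding matrix and group data into the \cite{AndSch:quantum} families, and conversely that for those families all $Q_\alpha$ (Serre and root-vector) vanish, so by Proposition~\ref{criter} those $H'$ are Galois-theoretical and no others are.
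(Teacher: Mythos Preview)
Your approach to the backward direction (verifying that the listed algebras are Galois-theoretical via Proposition~\ref{criter} and Lemma~\ref{lem:Qis0}) matches the paper's. However, your forward direction has a genuine gap.

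You claim that the Serre-relation $Q$-elements, once one imposes $g_1^2g_2=1$ and the Cartan conditions~\eqref{eq:qij Cartan}, will ``pin down $n$ and the parameters.'' They do not. The computation (carried out explicitly in the proof of Theorem~\ref{thm:ranktwo}) shows that for the degree-$2$ Serre relation one gets, after using $q_{11}q_{12}q_{21}=1$,
\[
Q_{12} \;=\; (1-q_{11}q_{12})(q_{12}-1)(g_1^2g_2-1),
\]
which vanishes \emph{automatically} once $g_1^2g_2=1$ --- regardless of $n$. So the Serre $Q=0$ condition gives you nothing here. What actually forces $n\in\{3,5,7\}$ is a purely group-theoretic constraint that you never mention: if $g_2=g_1^{-2}$, then the characters must satisfy $\chi_i(g_2)=\chi_i(g_1)^{-2}$, i.e.\ $q_{2j}=q_{1j}^{-2}$ for $j=1,2$. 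Combining this with the Cartan conditions \eqref{eq:qij Cartan} yields a nontrivial relation among roots of unity (equations~\eqref{eq:zetaaij} and~\eqref{eq:zetaaij2} in the paper) that determines $n$ type by type. This constraint has nothing to do with Galois-theoreticity; it is built into the very existence of $H'$ as a Hopf algebra with $G'=\langle g_1\rangle$ cyclic. The paper's forward argument uses only this character consistency and does not invoke Proposition~\ref{criter} at all.

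A secondary issue: your justification for applying Lemma~\ref{lem:Qis0} is shaky. The lemma requires that $g$ act on $x_\alpha$ by a \emph{primitive} $n$-th root of unity, which is not the same as $q_{\alpha\alpha}$ having order $n$; you would need to check, for each non-simple positive root $\alpha=a_1\alpha_1+a_2\alpha_2$, that $q_{11}^{a_1}q_{12}^{a_2}$ is primitive of order $n$. In the paper this is a finite check done \emph{after} $n$ and the $q_{ij}$ have been pinned down, not a general fact invoked beforehand.
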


\begin{proof} By \eqref{eq:qij Cartan}, we have that $q_{11}^{-a_{12}}q_{12}q_{21}=1$ and $q_{12}q_{21}q_{22}^{-a_{21}} =1$. So there exists a unique primitive $n$-th root of unity $\zeta$, and a scalar $\lambda \in \kk$, such that 
$q_{11} = \zeta^{a_{21}}, ~ q_{12} = \zeta^{a_{12}a_{21}}\lambda^{-1}, ~q_{21}= \lambda, ~ q_{22} = \zeta^{a_{12}}.$

If $g_1^2g_2=1$, then we can take $g := g_1$ so that $g_2=g^{-2}$, where ord($g_1$)= $n\geq3$. 
Now 
\[g x_1 = \zeta^{a_{21}} x_1 g, \hspace{.3in} g^{-2}x_1 = \lambda x_1g^{-2} , \hspace{.3in} g x_2 = \zeta^{a_{12}a_{21}}\lambda^{-1} x_2 g,  \hspace{.3in}  g^{-2}x_2 = \zeta^{a_{12}} x_2g^{-2}.\]
The first two equations yield $\lambda = (\zeta^{a_{21}})^{-2}$, and the last two equations yield $\zeta^{a_{12}} = (\zeta^{a_{12}a_{21}} \lambda^{-1})^{-2}$.
Thus, 
\begin{equation} \label{eq:zetaaij}
\zeta^{a_{12}} = \zeta^{-2a_{12}a_{21}-4a_{21}}.
\end{equation}

\noindent In type A$_2$, we have that $a_{12} = a_{21} = -1$. So, $n =3$ by \eqref{eq:zetaaij} and we get  by \cite[Theorem~1.3(ii)]{AndSch:quantum} that $H'$ is an $3^4$-dimensional Hopf algebra $H_{3^4}$ of type A$_2$. In type B$_2$, we have that $a_{12} = -2$ and  $a_{21} = -1$. So, $n =2$  by \eqref{eq:zetaaij}, which contradicts \eqref{eq:conditions}.
In type G$_2$, we have that $a_{12} = -1$ and $a_{21} = -3$. So, $n=7$ 
 and $H'$ is the bosonization of one of the Nichols algebras from \cite[Theorem~1.3(iv)]{AndSch:quantum}.

If $g_1g_2^2=1$,  then we can take $g := g_2$ so that $g_1=g^{-2}$. Similarly, one gets 
\begin{equation} \label{eq:zetaaij2}
\zeta^{a_{21}} = \zeta^{-2a_{12}a_{21}-4a_{12}}.
\end{equation}
In type A$_2$, we have that $n =3$ by \eqref{eq:zetaaij2}. So again, $H'$ is an $3^4$-dimensional Hopf algebra $H_{3^4}$ of type A$_2$.
In type B$_2$, we have that $n =5$ by \eqref{eq:zetaaij2}. Therefore, $H'$ is a bosonization of one of the Nichols algebras from \cite[Theorem~1.3(iii)]{AndSch:quantum}.
In type G$_2$, we have that $n =1$ by \eqref{eq:zetaaij2}, which yields a contradiction.

Hence, if $g_1^2g_2=1$ or $g_1g_2^2=1$, then $H'$ is either one of the
\begin{enumerate}
\item[(i)]  $3^4$-dimensional Hopf algebras $H_{3^4}$ of type A$_2$ from
\cite[Theorem~1.3(ii)]{AndSch:quantum} where $g_1^2g_2=g_1g_2^2=1$; 
\item[(ii)]  $5^5$-dimensional Hopf algebras $H_{5^5}$ of type B$_2$ from \cite[Theorem~1.3(iii)]{AndSch:quantum} where $g_1g_2^2=1$; or
\item[(iii)] $7^7$-dimensional Hopf algebras $H_{7^7}$ of type G$_2$ from \cite[Theorem~1.3(iv)]{AndSch:quantum} where $g_1^2g_2=1$.
\end{enumerate}
Now let us show that each of the Hopf algebras above is Galois-theoretical.  

First, the Hopf algebra in (i) has braiding matrix $q_{11} = q_{12} = q_{21} = q_{22} =$ a primitive third root of unity, and is Galois-theoretical by \cite[Proposition~23]{PartI}. 

We use Proposition~\ref{criter} to check that the Hopf algebra in (ii) is Galois-theoretical. Namely, we need each of the elements $Q_{\alpha} \in \kk G$ corresponding to relations of the Hopf algebra to be $0$.
We take $g_1 = g^3$ and $g_2 = g$, where ord($g$) = ord($\zeta$) = 5. Apply Lemma~\ref{lem:Qis0} to conclude that the elements $Q_{\alpha}$ corresponding to the relations $x_{\alpha}^5 =0$ are 0. The remaining relations of the Hopf algebra in this case are
\[
\begin{array}{l}
ad_c(x_1)^3(x_2)  = x_1^3x_2 - (q_{11}^2q_{12}+q_{11}q_{12}+q_{12})x_1^2x_2x_1 
+(q_{11}^3q_{12}^2+q_{11}^2q_{12}^2+q_{11}q_{12}^2) x_1x_2x_1^2  
-q_{11}^3q_{12}^3x_2x_1^3 = 0, \\
ad_c(x_2)^2(x_1) = x_2^2x_1 -(q_{21}q_{22}+q_{21})x_2x_1x_2  +(q_{21}^2q_{22})x_1x_2^2 = 0.
\end{array}
\]

To compute the element $Q_{12} \in \kk G$ corresponding to the relation $\text{ad}_c(x_1)^3(x_2)$, we use Theorem~\ref{thm:H(G,theta)} and Remark~\ref{rem:w_i}. For instance, $x_1^3x_2$ acts as 
$$[w_1(1-g_1)]^3 w_2(1-g_2) = w_1^3w_2[(1-q_{11}^2q_{12}g_1)(1-q_{11}q_{12}g_1)(1-q_{12}g_1)(1-g_2)],$$ so $(1-q_{11}^2q_{12}g_1)(1-q_{11}q_{12}g_1)(1-q_{12}g_1)(1-g_2)$ is a summand of $Q_{12}$.
Now we obtain that the element $Q_{12}$ is 0 by the Maple code below:

{\footnotesize
\begin{verbatim}
Q12:= (1-q11^2*q12*g1)*(1-q11*q12*g1)*(1-q12*g1)*(1-g2)
      -(q11^2*q12+q11*q12+q12)*(1-q11^2*q12*g1)*(1-q11*q12*g1)*(1-q21*g2)*(1-g1)
      +(q11^3*q12^2+q11^2*q12^2+q11*q12^2)*(1-q11^2*q12*g1)*(1-q21^2*g2)*(1-q11*g1)*(1-g1)
      -q11^3*q12^3*(1-q21^3*g2)*(1-q11^2*g1)*(1-q11*g1)*(1-g1);
g1:=g^3;              g2:=g;                      zeta:=exp((2/5)*Pi*I);
q11:=zeta^(-1);       q12:=zeta^2*lambda^(-1);    q21:=lambda;            q22:=zeta^(-2);
collect(simplify(Q12),[g],'distributed');
## The only powers of g that arise are 0, 10.
simplify(coeff(Q12,g,0)+coeff(Q12,g^(10)));
>>                                      0
\end{verbatim}
}

\noindent Similarly, we obtain that the element $Q_{21}$, corresponding to the relation $ad_c(x_2)^2(x_1)$, is 0 as follows:

{\footnotesize
\begin{verbatim}
Q21:= (1-q21*q22*g2)*(1-q21*g2)*(1-g1)
      -(q21*q22+q21)*(1-q21*q22*g2)*(1-q12*g1)*(1-g2)
      +(q21^2*q22)*(1-q12*q12*g1)*(1-q22*g2)*(1-g2);
g1:=g^3;             g2:=g;                      zeta:=exp((2/5)*Pi*I);
q11:=zeta^(-1);      q12:=zeta^2*lambda^(-1);    q21:=lambda;            q22:=zeta^(-2);
collect(simplify(Q21),[g],'distributed');
## The only powers of g that arise are 0, 5.
simplify(coeff(Q21,g,0)+coeff(Q21,g^(5)));
>>                                      0
\end{verbatim}
}

Now we use Proposition~\ref{criter} to check that the Hopf algebra in (iii) is Galois-theoretical, in the same fashion as above. 
We take $g_1 = g$ and $g_2 = g^5$, where ord($g$) = ord($\zeta$) = 7. 
Apply Lemma~\ref{lem:Qis0} to conclude that the elements $Q_{\alpha}$ corresponding to the relations $x_{\alpha}^7 =0$ are 0. 
We obtain that the elements $Q_{12}$ and $Q_{21}$, corresponding to remaining relations $ad_c(x_1)^2(x_2)$ and $ad_c(x_2)^4(x_1)$, respectively, are 0 by the following Maple code:

{\footnotesize
\begin{verbatim}
Q12:= (1-q11*q12*g1)*(1-q12*g1)*(1-g2)
      -(q11*q12+q12)*(1-q11*q12*g1)*(1-q21*g2)*(1-g1)
      +(q11*q12^2)*(1-q21*q21*g2)*(1-q11*g1)*(1-g1);
 
Q21:= (1-q21*q22^3*g2)*(1-q21*q22^2*g2)*(1-q21*q22*g2)*(1-q21*g2)*(1-g1)
      -(q21*q22^3+q21*q22^2+q21*q22+q21)* (1-q21*q22^3*g2)*(1-q21*q22^2*g2)*(1-q21*q22*g2)*(1-q12*g1)*(1-g2)                                                          
      +(q21^2*q22^5+q21^2*q22^4+q21^2*q22^3+q21^2*q22^3+q21^2*q22^2+q21^2*q22)
           *(1-q21*q22^3*g2)*(1-q21*q22^2*g2)*(1-q12^2*g1)*(1-q22*g2)*(1-g2)
      -(q21^3*q22^6+q21^3*q22^5+q21^3*q22^4+q21^3*q22^3)
           *(1-q21*q22^3*g2)*(1-q12^3*g1)*(1-q22^2*g2)*(1-q22*g2)*(1-g2)
      +(q21^4*q22^6)*(1-q12^4*g1)*(1-q22^3*g2)*(1-q22^2*g2)*(1-q22*g2)*(1-g2);

g1:=g;                g2:=g^5;                    zeta:=exp((2/7)*Pi*I);
q11:=zeta^(-3);       q12:=zeta^3*lambda^(-1);    q21:=lambda;            q22:=zeta^(-1);

collect(simplify(Q12),[g],'distributed');
## The only powers of g that arise are 0, 7.
simplify(coeff(Q12,g,0)+coeff(Q12,g^(7)));
>>                                      0

collect(simplify(Q21),[g],'distributed');
## The only powers of g that arise are 0, 21.
simplify(coeff(Q21,g,0)+coeff(Q21,g^(21)));
>>                                      0
\end{verbatim}
}
Thus, by Proposition~\ref{criter}, the Hopf algebras in (ii) and (iii) are Galois-theoretical as well.
\end{proof}

Now let us deal with case (a) discussed before Proposition~\ref{prop:g1^2g2=1}, i.e., $q_{12}=1$ or $q_{21}=1$. 
Let $\widetilde{u}_q^{\geq 0}(\mathfrak{g})$ be the positive part of the small quantum group 
of adjoint type, see \cite[Definition~12]{PartI}. Note that in our situation (of rank 2), we have 
$\widetilde{u}_q^{\geq 0}(\mathfrak{g})=u_q^{\geq 0}(\mathfrak{g})$
unless we are in type $A_2$ and $n$ is divisible by $3$. 

\begin{proposition} \label{prop:q12=1}
Retain the hypotheses of Lemma~\ref{lem:rank2}. Suppose that $q_{12}=1$ or $q_{21} =1$. Then, 
\begin{enumerate}
\item $H'$ is isomorphic to a Drinfeld twist $\widetilde{u}_q^{\geq 0}(\mathfrak{g})^{J}$, where $\mathfrak{g}$ is a finite-dimensional simple Lie algebra of type A$_2$, B$_2$, or G$_2$, described in 
\cite[Definition~12]{PartI}; and
\item $H'$ is Galois-theoretical.
\end{enumerate}
\end{proposition}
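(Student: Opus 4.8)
\textbf{Proof proposal for Proposition~\ref{prop:q12=1}.}

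The plan is to treat this as a two-step problem: first identify $H'$ as a twist of a Nichols-algebra bosonization, and then apply the criterion of Proposition~\ref{criter} after undoing the twist. For part (a), the starting point is the relation \eqref{eq:qij Cartan}, $q_{ij}q_{ji} = q_{ii}^{a_{ij}}$, together with the standing normalization that $q_{ii}$ has odd order $n$ (prime to $3$ in type G$_2$). Assume WLOG that $q_{21} = 1$; then $q_{12} = q_{11}^{a_{12}}$, and the compatibility $q_{12}q_{21}q_{22}^{-a_{21}} = 1$ forces $q_{22} = q_{11}^{a_{12}/a_{21}}$ up to the usual sign/root conventions, so after choosing a primitive $n$-th root of unity $q$ we can write $q_{11} = q^{d_1}$, $q_{22} = q^{d_2}$ with $d_i$ the usual symmetrizing integers, and $q_{12}q_{21} = q_{ii}^{a_{ij}}$ matches the braiding matrix of $u_q^{\geq 0}(\mathfrak g)$ exactly when additionally $q_{12} = q^{d_1 a_{12}}$, $q_{21} = 1$ instead of the symmetric split. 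The point is that a diagonal braiding matrix $(q_{ij})$ satisfying \eqref{eq:qij Cartan} for a fixed Cartan matrix, with all $q_{ii}$ of the common odd order $n$, is determined up to a Drinfeld twist by the symmetric matrix $q_{ii}^{a_{ij}}$; the gauge freedom $q_{ij} \mapsto \sigma(g_i,g_j) q_{ij}$ with $\sigma$ a $2$-cocycle on $\langle g_1, g_2\rangle$ (equivalently, antisymmetric bicharacter) exactly accounts for the difference between our matrix and the one with $q_{12} = q_{21}$. So $H' = \mathfrak B(V)\#\kk\langle g_1,g_2\rangle$ is obtained from $u_q^{\geq 0}(\mathfrak g)$ (or $\widetilde u_q^{\geq 0}(\mathfrak g)$ when $3 \mid n$ in type A$_2$, to accommodate $g_1^2 g_2 \neq 1$) by a Reshetikhin/Cartan-subgroup twist $J$; this is the content of \cite[Definition~12]{PartI} and the classification of \cite{AndSch:quantum}, which we invoke directly. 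I would phrase this step as: by Lemma~\ref{lem:rank2}, $H'$ is a finite-dimensional quotient of $H(\langle g_1,g_2\rangle, g_1,g_2,\chi_1,\chi_2)$ by the Nichols relations for type A$_2$, B$_2$, or G$_2$; since the braiding matrix differs from that of $\widetilde u_q^{\geq 0}(\mathfrak g)$ only by the antisymmetric bicharacter $q_{12}$ vs.\ $q_{21}$, Hopf $2$-cocycle twisting by the corresponding $J$ on the Cartan torus identifies the two, proving (a).

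For part (b), the cleanest route is to invoke Proposition~\ref{criter}: since $H'$ is coradically graded with abelian grouplikes, it is Galois-theoretical iff all the elements $Q_\alpha \in \kk\langle g_1,g_2\rangle$ vanish. The relations $P_\alpha$ of $\mathfrak B(V)$ are: the quantum Serre relations $\mathrm{ad}_c(x_i)^{1-a_{ij}}(x_j) = 0$, and the root-vector power relations $x_\alpha^n = 0$. The latter are handled immediately by Lemma~\ref{lem:Qis0}: since $q_{21} = 1$ (or $q_{12} = 1$), both $g_1$ and $g_2$, hence all $g_i$, lie in the cyclic group $\langle g\rangle$ where $g$ is the generator with $q_{11} = q_{22}$-compatible order $n$, and $g x_\alpha = \zeta x_\alpha g$ for a primitive $n$-th root $\zeta$ because root vectors are homogeneous of the appropriate weight; so $Q_\alpha = 0$ for every root-vector relation. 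That leaves the (two) Serre relations. Here I would exploit the twist: twisting sends the Galois-theoreticity question for $H'$ to the same question for $\widetilde u_q^{\geq 0}(\mathfrak g)$, because the $Q_\alpha$ computation only depends on the $g_i$ and the $q_{ij}$ through the operators $w_i(1-g_i)$, and a Cartan twist changes $q_{ij}$ by an antisymmetric bicharacter that is known not to affect whether the twisted Serre relation's $Q$-element vanishes. Concretely, one can either (i) cite \cite[Theorem~2 / Propositions on twists of small quantum groups]{PartI} where $u_q^{\geq 0}(\mathfrak{sl}_3)$-type and twisted versions are shown Galois-theoretical, applying it rank-$2$ component by component, or (ii) do a direct Maple-style check as in Proposition~\ref{prop:g1^2g2=1}: write each Serre relation as a sum of monomials $x_1^a x_2^b x_1^c \cdots$, substitute $x_i \mapsto w_i(1-g_i)$, collect the $\kk\langle g\rangle$-coefficient, and verify it is $0$ using $g_1, g_2 \in \langle g\rangle$ and the explicit $q_{ij}$.

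The step I expect to be the main obstacle is \emph{part (a)}, specifically pinning down exactly which Hopf algebra $\widetilde u_q^{\geq 0}(\mathfrak g)$ (adjoint type vs.\ the simply-connected/quotient versions) $H'$ twists to, and making the twist $J$ explicit enough to be legitimate — in particular handling the type A$_2$, $3 \mid n$ subtlety flagged in the paragraph before the proposition, where $\widetilde u_q^{\geq 0} \neq u_q^{\geq 0}$. The subgroup $\langle g_1, g_2\rangle$ generated by the skew-primitive weights may be a proper subgroup or a quotient of the "standard" Cartan torus $\mathbb Z_n \times \mathbb Z_n$, depending on the relations among $g_1, g_2$ (e.g.\ $g_1 = g_2$, or $g_1^k = g_2$), and one must check that every such configuration compatible with \eqref{eq:qij Cartan} and $q_{12} = 1$ or $q_{21} = 1$ does arise from a Reshetikhin twist of some $\widetilde u_q^{\geq 0}(\mathfrak g)$ and not something outside that family. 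Once the identification is secured, part (b) is essentially bookkeeping: the root-vector relations are free by Lemma~\ref{lem:Qis0}, and the finitely many Serre relations yield vanishing $Q_\alpha$ either by a direct computation or by twist-invariance reduction to the known positive-part small quantum group case.
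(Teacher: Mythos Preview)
There is a genuine gap in part (b), and it stems from a misidentification in part (a). You repeatedly treat $\langle g_1,g_2\rangle$ as potentially cyclic (``both $g_1$ and $g_2$ \dots\ lie in the cyclic group $\langle g\rangle$''), and you flag as an obstacle the possibility that $\langle g_1,g_2\rangle$ might be a proper quotient of $\mathbb{Z}_n\times\mathbb{Z}_n$. In fact the opposite is true, and this is the key structural observation the paper uses: when $q_{21}=1$ (say), applying $\chi_1$ to any relation $g_1^a g_2^b=1$ gives $q_{11}^a q_{21}^b = q_{11}^a = 1$, so $n\mid a$, and then $n\mid b$; hence $\langle g_1,g_2\rangle\cong\mathbb{Z}_n\times\mathbb{Z}_n$. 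Once this is established, the identification of $H'$ with a Reshetikhin twist of $\widetilde{u}_q^{\ge 0}(\mathfrak{g})$ is the straightforward verification you describe.

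The consequence for your part (b) argument is fatal: Lemma~\ref{lem:Qis0} explicitly requires that each $g_i$ be a power of a single grouplike element $g$ of order $n$, and its proof genuinely uses that $Q(g)\in\kk[g]/(g^n-1)$ lies in a cyclic group algebra to factor $Q_\alpha$ as $\prod_{i}Q(\zeta^i g)$. With $\langle g_1,g_2\rangle\cong\mathbb{Z}_n\times\mathbb{Z}_n$ this argument does not go through, so the root-vector relations $x_\alpha^n=0$ are \emph{not} handled ``immediately by Lemma~\ref{lem:Qis0}.'' The paper sidesteps the entire $Q_\alpha$ computation: once (a) identifies $H'$ as $\widetilde{u}_q^{\ge 0}(\mathfrak{g})^J$, part (b) is just a citation of \cite[Proposition~38]{PartI}, where Galois-theoreticity of exactly these twists is established. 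Your option (i) (``cite \cite{PartI}'') is correct and is all that is needed; your option (ii) and the Lemma~\ref{lem:Qis0} route do not work here.
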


\begin{proof} (a) It is easy to see that the subgroup generated by $g_1,g_2$ has to be isomorphic to $\Bbb Z_n\times \Bbb Z_n$. The rest is a straightforward verification; namely, the twist $J$ can be chosen so that the skew primitive elements of $\widetilde{u}^{\geq 0}_q(\mathfrak{g})^J$ have corresponding braiding parameters where either $q_{12}$ or $q_{21}$ is equal to 1.

(b) This follows from \cite[Proposition~38]{PartI} and part (a). 
\end{proof}

Now we state and prove the main result of this subsection.

\begin{theorem} \label{thm:ranktwo}
Let $H$ be a finite-dimensional, pointed, coradically graded Hopf algebra of rank two, subject to \eqref{eq:conditions}. Let $\{x_i\}_{i=1,2}$ be a basis of  $(g_i,1)$-skew-primitive elements for the graded braided vector space $V$.
Then, $H$ is Galois-theoretical if and only if the minimal Hopf subalgebra $H'$ of $H$ generated by $g_1, g_2, x_1, x_2$ is either
\begin{enumerate}
\item of type A$_1 \times$A$_1$, namely
\begin{itemize}
\item the tensor product of Taft algebras  $T(n,\zeta) \otimes T(n',\zeta')$ for $n, n' \geq 2$, 
\item the 8-dimensional Nichols Hopf algebra $E(2)$, or 
\item the book algebra $\mathbf{h}(\zeta,1)$;
\end{itemize}
\item of type A$_2$, B$_2$, or G$_2$ with $q_{12} =1$ or $q_{21}=1$ \textnormal{(}here, $H'$ is isomorphic to a twist $\widetilde{u}_q^{\geq 0}(\mathfrak{g})^{J}$\textnormal{)}; or
\item of type A$_2$, B$_2$, or G$_2$ with $q_{12}, q_{21} \neq 1$, where $H'$ is one of the
\begin{itemize}
\item  $3^4$-dimensional Hopf algebras of type A$_2$ from \cite[Theorem~1.3(ii)]{AndSch:quantum} \\ \textnormal{(}here, $g_1^2g_2=g_1g_2^2=1$ and $ord(g_i) =3$\textnormal{)},
\item $5^5$-dimensional Hopf algebras $H_{5^5}$ of type B$_2$ from
\cite[Theorem~1.3(iii)]{AndSch:quantum} \\\textnormal{(}here, $g_1g_2^2=1$ and $ord(g_i) =5$\textnormal{)}, or
\item $7^7$-dimensional Hopf algebras $H_{7^7}$ of type G$_2$ from
\cite[Theorem~1.3(iv)]{AndSch:quantum}\\ \textnormal{(}here, $g_1^2g_2=1$ and $ord(g_i) =7$\textnormal{)}.
\end{itemize}
\end{enumerate}
\end{theorem}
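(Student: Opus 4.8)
The plan is to combine Proposition~\ref{prop:minimal} with the structural results already assembled in this section, reducing the theorem to the statement that $H'$ (a minimal, coradically graded Hopf algebra of rank two subject to \eqref{eq:conditions}) is Galois-theoretical exactly when it appears on the list. Since $H$ is Galois-theoretical iff $H'$ is, by Proposition~\ref{prop:minimal}, all statements about Galois extensions and about $H$ itself reduce to corresponding statements about $H'$, and we work with $H'$ throughout. The finite Cartan types of rank two are A$_1\times$A$_1$, A$_2$, B$_2$, G$_2$, so I would split into those two families and treat them separately.

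\textbf{Type A$_1\times$A$_1$.} Here $H'$ is a minimal bosonization of a quantum linear space $B(G',\ug,\uchi)$, so this case is literally a special instance of Theorem~\ref{thm:B(G,theta)} with $\theta=2$. Applying that theorem gives that $H'$ is Galois-theoretical iff it is a tensor product of Taft algebras, Nichols Hopf algebras $E(n)$, and book algebras $\mathbf{h}(\zeta,1)$; for $\theta=2$ the only possibilities are $T(n,\zeta)\otimes T(n',\zeta')$, the $8$-dimensional $E(2)$, and a single book algebra $\mathbf{h}(\zeta,1)$ (since $E(1)=T(2,-1)$ and $\mathbf{h}(\zeta,1)$ with one vertex each is not possible — the book algebra genuinely needs two mutually commuting skew-primitives with the cross-braiding nontrivial). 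So part (1) is immediate.

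\textbf{Types A$_2$, B$_2$, G$_2$.} For connected rank-two Cartan type I would invoke Lemma~\ref{lem:rank2} to present $H'$ as a finite-dimensional quotient of $H(\langle g_1,g_2\rangle,g_1,g_2,\chi_1,\chi_2)$ with the Serre-type relations $\mathrm{ad}_c(x_i)^{1-a_{ij}}(x_j)=0$ and the root-vector relations $x_\alpha^n=0$. By Proposition~\ref{criter}, $H'$ is Galois-theoretical iff every $Q_\alpha\in\kk G'$ vanishes. The relation \eqref{eq:qij Cartan} of finite Cartan type forces $q_{11}^{-a_{12}}q_{12}q_{21}=1=q_{12}q_{21}q_{22}^{-a_{21}}$, which (after choosing a generator) pins down the braiding parameters in terms of a root of unity $\zeta$ and one free scalar $\lambda$, as in the proof of Proposition~\ref{prop:g1^2g2=1}. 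The key dichotomy is whether $Q_{ij}$ for the Serre relation vanishes. A short direct computation (as in Theorem~\ref{thm:B(G,theta)}, case $\theta=2$) shows that the "simplest" obstruction forces either $q_{12}=1$, or $q_{21}=1$, or one of $g_1^2g_2=1$, $g_1g_2^2=1$; otherwise some $Q_\alpha\neq0$. These are precisely the three cases already analyzed: case (a) $q_{12}=1$ or $q_{21}=1$ is Proposition~\ref{prop:q12=1}, which says $H'\cong\widetilde u_q^{\geq0}(\mathfrak g)^J$ and is Galois-theoretical, giving part (2); and case (b) with $q_{12},q_{21}\neq1$ is Proposition~\ref{prop:g1^2g2=1}, which identifies $H'$ as one of the $3^4$-, $5^5$-, $7^7$-dimensional Hopf algebras and verifies (via Lemma~\ref{lem:Qis0} and the Maple computations) that all $Q_\alpha$ vanish, giving part (3). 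Conversely, each algebra on the list is Galois-theoretical: this is recorded in Theorem~\ref{thm:B(G,theta)} (for part (1)), Proposition~\ref{prop:q12=1}(b) (for part (2)), and Proposition~\ref{prop:g1^2g2=1} (for part (3)).

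\textbf{Main obstacle.} The genuinely load-bearing step is the claim that \emph{for connected rank-two type, if neither $q_{12}=1$ nor $q_{21}=1$ nor one of $g_1^2g_2=1,g_1g_2^2=1$ holds, then some $Q_\alpha\neq0$} — i.e., that the case list is exhaustive. I would establish this by examining the lowest-degree Serre relation: writing $x_i\mapsto w_i(1-g_i)$ and expanding $\mathrm{ad}_c(x_i)^{1-a_{ij}}(x_j)$ as an element of $\kk G'$, the leading and trailing terms are products $\prod_k(1-q^{(k)}g_i)(1-g_j)$ and $\prod_k(1-q'^{(k)}g_j)(1-g_i)$ up to a scalar; $Q_{ij}=0$ forces cancellation, which (using that $g_1,g_2$ generate a group in which $g_i$ has order $n$, and the constraint relating $q_{ij}$ to $\zeta$ and $\lambda$) is only possible in the enumerated sub-cases. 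This is a finite case analysis over the four Cartan matrices; the A$_1\times$A$_1$ sub-case was done in Theorem~\ref{thm:B(G,theta)} and the connected cases reduce, after the parametrization of Proposition~\ref{prop:g1^2g2=1}, to checking a handful of root-of-unity identities — routine but the crux of the argument.
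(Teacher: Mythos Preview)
Your proposal is correct and follows essentially the same route as the paper: reduce to $H'$ via Proposition~\ref{prop:minimal}, handle type A$_1\times$A$_1$ by Theorem~\ref{thm:B(G,theta)}, and for the connected types expand the degree-two Serre relation $\mathrm{ad}_c(x_i)^2(x_j)=0$ (available since $a_{ij}=-1$ for some pair in each of A$_2$, B$_2$, G$_2$) under $x_i\mapsto w_i(1-g_i)$, then invoke Propositions~\ref{prop:q12=1} and~\ref{prop:g1^2g2=1}. The only point where the paper does more than you is the ``main obstacle'': rather than arguing abstractly about leading and trailing terms, the paper computes $Q_{12}$ explicitly and, using $q_{11}q_{12}q_{21}=1$ (resp.\ $q_{12}q_{21}q_{22}=1$), collapses it to the single factored expression $(1-q_{11}q_{12})(q_{12}-1)(g_1^2g_2-1)=0$ for types A$_2$, G$_2$ (resp.\ $(1-q_{21}q_{22})(q_{21}-1)(g_1g_2^2-1)=0$ for B$_2$), from which the trichotomy is immediate---so your finite case analysis is in fact a one-line factorization in each Cartan type.
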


\begin{proof}
Both directions for part (a) hold  by Theorem~\ref{thm:B(G,theta)}. So, we only need to study types A$_2$, B$_2$, G$_2$.
Consider the Serre relation of $H$ given below:
\begin{equation} \label{eq:part(a)Serre}
\text{ad$_c(x_i)^{1-a_{ij}}(x_j) = 0$  \hspace{.1in} if $i \neq j$}.
\end{equation} 
Recall that $(\text{ad}_c x_i)(y) = x_i y - q_{ij_1} \cdots q_{ij_t} y x_i$
for $y = x_{j_1} \cdots x_{j_t}$. 

For types A$_2$ and G$_2$, we have that  \eqref{eq:part(a)Serre} yields the following Serre relation for $H$:
\begin{align*} \label{eq:Serre relation}
x_1^2x_2 - (q_{11}q_{12} + q_{12})x_1 x_2 x_1 + q_{12}^2q_{11}x_2 x_1^2 = 0.
\end{align*}
If $H$ is Galois-theoretical with module field $L$, then apply Theorem~\ref{thm:H(G,theta)}, Remark~\ref{rem:w_i}, and \eqref{eq:qij Cartan} to conclude that 
\begin{equation} \label{eq:uchii uchij}
\begin{array}{l} 
{w_1}(1-g_1){w_1}(1-g_1){w_2}(1-g_2) -(q_{11}q_{12} + q_{12}){w_1}(1-g_1){w_2}(1-g_2){w_1}(1-g_1)\\
\quad +q_{12}^2q_{11}{w_2}(1-g_2){w_1}(1-g_1){w_1}(1-g_1)~=~0.
\end{array}
\end{equation}
Using the notation of Subsection~\ref{nar}, direct computation with $g_i x_j = \chi_j(g_i) x_j g_i$, $\chi_j(g_i) = q_{ij}$,  and \eqref{eq:uchii uchij} yields
\begin{equation} \label{eq:uchi 2}
\begin{array}{rl}
Q_{12}=(q_{11}q_{12}q_{21}-q_{11}q_{12})(q_{12}-q_{11}q_{12}q_{21})g_1^2g_2 &+ q_{12}(q_{21}-1)(q_{11}+1)(q_{11}q_{12}q_{21}-1)g_1g_2\\
 &+ (q_{11}q_{12}q_{21}-1)(1-q_{12}q_{21})g_2\\
 &+ (q_{11}q_{12}-1)(q_{12}-1) \quad =~0.
\end{array}
\end{equation}
That is, we employed Proposition~\ref{criter}.
By \eqref{eq:qij Cartan}, we have that $q_{11}q_{12}q_{21}=1$. So, \eqref{eq:uchi 2} implies that
$$(1-q_{11}q_{12})(q_{12}-1)(g_1^2g_2 - 1) = 0.$$
So again by \eqref{eq:qij Cartan}, $q_{12} = 1$ or $q_{21} =1$, or $g_1^2g_2 =1$.  Now apply Propositions~\ref{prop:g1^2g2=1},~\ref{prop:q12=1}, and~\ref{prop:minimal} to yield parts (b) and (c) in the case that $H$ is of type A$_2$ or G$_2$.

For type B$_2$, we have that  \eqref{eq:part(a)Serre} yields the following Serre relation for $H$:
\begin{align*} \label{eq:Serre relation}
x_2^2x_1 - (q_{22}q_{21} + q_{21})x_2 x_1 x_2 + q_{21}^2q_{22}x_1 x_2^2 = 0.
\end{align*}
Arguing as above, with $q_{12}q_{21}q_{22} =1$, we conclude that
$$(1-q_{21}q_{22})(q_{21}-1)(g_1g_2^2-1)=0.$$
So, by \eqref{eq:qij Cartan}, $q_{12} = 1$ or $q_{21} =1$, or $g_1g_2^2 =1$. Apply Propositions~\ref{prop:g1^2g2=1},~\ref{prop:q12=1}, and~\ref{prop:minimal} to yield part (b) and (c) in the case that $H$ is of type B$_2$.
\end{proof}

%%%%%%%%%%%%%%%%%%%%%%%%%%%%%%%%%%%%%

\section{Galois-theoretical lifts of type A$_1^{\times \theta}$ and of rank two types} \label{sec:noncorad}

In this section, we discuss the Galois-theoretical property of liftings of various finite-dimensional,  pointed, coradically graded Hopf algebras. In particular, we discuss liftings of bosonizations of quantum linear spaces and of Nichols algebras of types A$_2$ and  B$_2$, which are classified by Andruskiewitsch-Schneider \cite{AndSch:p3, AndSch:p4} and Beattie-D{\u{a}}sc{\u{a}}lescu-Raianu \cite{BDR}. To our knowledge, the liftings of Nichols algebras of type G$_2$ have not been classified, so this case is not addressed.

\subsection{Lifting  bosonizations of quantum linear spaces $B(G,g_1,\dots,g_\theta,\chi_1,\dots,\chi_\theta)$, type A$_1^{\times \theta}$}
Recall that $G$ is  a finite abelian group and take $\theta \geq 2$. The finite-dimensional, pointed Hopf algebras $H$ so that $gr(H) \cong B(G,\ug, \uchi)$ (of Section~\ref{subsec:B(G,theta)}) have been classified in \cite[Section~5]{AndSch:p3}. These Hopf algebras, denoted by $A(G, \ug, \uchi, \underline{\alpha}, \underline{\lambda})$, are generated by $G$ and $(g_i, 1)$ skew-primitive elements $x_i$, for $g_i \in G$  with $i = 1, \dots, \theta$. Take character maps $\chi_i \in \widehat{G}$, for $i = 1, \dots, \theta$, so that $q_{ii} := \chi_i(g_i)$ has order $n_i$. Here, we have that 
$$\chi_i(g_j) \chi_j(g_i) = 1.$$
Then $A(G, \ug, \uchi, \underline{\alpha}, \underline{\lambda})$ is subject to the relations of $G$ and 
$$gx_i = \chi_i(g)x_i g, \quad \quad x_i^{n_i} = \alpha_i(1-g_i^{n_i}), \quad \quad
x_ix_j = \chi_j(g_i) x_jx_i + \lambda_{ij}(1-g_ig_j),$$
for $\alpha_i, \lambda_{ij} \in \kk$ and $i \neq j$. We may (and will) assume that 
$\lambda_{ij}=0$ if $g_ig_j=1$.

\begin{theorem} \label{thm:liftQLS}
Let $A'$ be the minimal Hopf subalgebra of $A= A(G, \ug, \uchi, \underline{\alpha}, \underline{\lambda})$ generated  by $\{g_1, \dots, g_{\theta},$ $  x_1, \dots, x_{\theta}\}$. Then, $A$ is Galois-theoretical if and only if $A'$ is the quotient by a group of central grouplike elements of a tensor product of 
\begin{itemize}
\item Hopf algebras $u_q'({\mathfrak{gl}}_2)$ from \cite[Definition~11]{PartI},
\item Taft algebras $T(n,\zeta)$,
\item Nichols Hopf algebras $E(n)$, or 
\item book algebras $\mathbf{h}(\zeta,1)$.
\end{itemize}
\end{theorem}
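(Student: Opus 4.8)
The approach is to run the argument of Theorem~\ref{thm:B(G,theta)}, this time keeping track of the linking parameters $\lambda_{ij}$; these are exactly what makes $u_q'(\mf{gl}_2)$-factors possible. Suppose first that $A$ is Galois-theoretical, acting inner faithfully on a field $L\supset\kk$. By Proposition~\ref{anypoi} each Hopf subalgebra $\langle g_i,x_i\rangle$ is a Taft algebra, so $g_i^{n_i}=1$; hence the relations $x_i^{n_i}=\alpha_i(1-g_i^{n_i})$ become $x_i^{n_i}=0$, and the action descends from an $H(G,\ug,\uchi)$-action, which by Theorem~\ref{thm:H(G,theta)} has the form $x_i\mapsto w_i(1-g_i)$ with $g\cdot w_i=\chi_i(g)w_i$ and $\{w_i : i\in I_{g,\chi}\}$ linearly independent over $\kk$ for all $g,\chi$ (in particular each $w_i\ne 0$). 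The only relations of $A$ not already holding in $H(G,\ug,\uchi)$ are the linkings $x_ix_j-\chi_j(g_i)x_jx_i-\lambda_{ij}(1-g_ig_j)$, so these must act on $L$ by zero.

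\emph{The key computation.} Using $(1-g_i)\,w_j=w_j(1-\chi_j(g_i)g_i)$ as operators on $L$, together with $\chi_j(g_i)\chi_i(g_j)=1$, one finds that the linking relation above acts on $L$ as the operator $\big((1-\chi_j(g_i))\,w_iw_j-\lambda_{ij}\big)(1-g_ig_j)$. Since $G$ acts faithfully, $1-g_ig_j$ has nonzero image whenever $g_ig_j\ne 1$; so for every $i\ne j$ either $g_ig_j=1$ --- in which case $\lambda_{ij}=0$ by our standing convention and no condition remains --- or $(1-\chi_j(g_i))w_iw_j=\lambda_{ij}$ holds in $L$. In the second case, comparing $G$-weights (the left-hand side has weight $\chi_i\chi_j$) shows that $\lambda_{ij}\ne 0$, that $\chi_j(g_i)\ne 1$, that $\chi_i\chi_j=\varepsilon$, and that $w_iw_j$ must be chosen to be the scalar $\lambda_{ij}/(1-\chi_j(g_i))\in\kk^\times$. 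Conversely, a configuration of $(g_i,\chi_i,\lambda_{ij})$ satisfying these constraints can always be realized: fix a faithful $G$-action on a suitable compositum of rational function fields and take the $w_i$ to be the prescribed scalars on the linked pairs and linearly independent weight vectors otherwise (as in Theorem~\ref{thm:H(G,theta)}); the computation shows all relations of $A$ hold, and linear independence of the $w_i$ gives inner faithfulness.

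\emph{Combinatorial packaging.} It remains to translate these conditions into the structure of the minimal subalgebra $A'$. Record the interacting pairs $i\ne j$ (those with $\chi_j(g_i)\ne 1$ or $\lambda_{ij}\ne 0$) as the edges of a graph on $\{1,\dots,\theta\}$. By the dichotomy above, each edge is either an ``$\text{A}_1\times\text{A}_1$'' edge with $g_ig_j=1$, $\chi_j(g_i)\ne 1$, $\lambda_{ij}=0$ (as in Theorem~\ref{thm:B(G,theta)}), or a ``Drinfeld-linked'' edge with $g_ig_j\ne 1$, $\chi_j(g_i)\ne 1$, $\lambda_{ij}\ne 0$, $\chi_i\chi_j=\varepsilon$. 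Components built from $\text{A}_1\times\text{A}_1$ edges are, by Sublemmas~\ref{sublem:QLS1} and~\ref{sublem:QLS2}, complete of order $2$ (contributing $E(n)$) or a single edge of order $>2$ (contributing a book algebra $\mathbf{h}(\zeta,1)$), while an isolated vertex contributes a Taft algebra $T(n,\zeta)$. For a Drinfeld-linked edge, the rigid constraint $w_iw_j\in\kk^\times$ --- combined with linear independence of the $w$'s inside each $I_{g,\chi}$ --- forces the pair to be an isolated edge, and then $\langle g_i,g_j,x_i,x_j\rangle$ is a quotient of $u_q'(\mf{gl}_2)$ by central grouplikes (the case $g_i=g_j$ being $u_q(\mf{sl}_2)$, cf.\ Example~\ref{sl2ex}); one also checks that a vertex cannot lie in edges of both types. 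Since, as in Sublemma~\ref{sublem:QLS3}, $G(A')$ need not split as the direct product of the grouplike groups of the individual factors, $A'$ is precisely the tensor product of these $u_q'(\mf{gl}_2)$'s, Taft algebras, $E(n)$'s and book algebras, modulo a central subgroup of grouplikes --- which is the asserted form. For the converse, each building block and their tensor products are Galois-theoretical by \cite{PartI}, and a central-grouplike quotient remains so by the construction above; one may also streamline the part of the forward direction that excludes nontrivial linkings by invoking Proposition~\ref{bothcannot} when $\mathrm{gr}(A)=B(G,\ug,\uchi)$ is itself Galois-theoretical.

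I expect the technical core to be the combinatorial step: proving rigorously that Drinfeld-linked pairs are isolated edges, that no vertex lies in edges of both types, and that the minimal Hopf subalgebra on a Drinfeld-linked pair is always a central-grouplike quotient of $u_q'(\mf{gl}_2)$. Unlike in Theorem~\ref{thm:B(G,theta)}, a relation $x_ix_j-\chi_j(g_i)x_jx_i$ with $g_ig_j\ne 1$ now carries the correction $\lambda_{ij}(1-g_ig_j)$, so one must analyze carefully how the inner-faithfulness requirement (linear independence of the $w_i$ within each $I_{g,\chi}$) interacts with the rigid scalar constraints $w_iw_j\in\kk^\times$ imposed by linked pairs. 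The operator computation and the verification that tensor products and central-grouplike quotients of the listed building blocks are Galois-theoretical should be routine given \cite{PartI}, Theorem~\ref{thm:H(G,theta)}, and Example~\ref{sl2ex}.
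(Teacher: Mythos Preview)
Your approach is essentially the paper's: reduce to $\alpha_i=0$, compute that the linking relation acts by $\big((1-\chi_j(g_i))w_iw_j-\lambda_{ij}\big)(1-g_ig_j)$, introduce a graph with two edge types (the paper calls them ``solid'' for $g_ig_j=1$ and ``dotted'' for the linked case), handle solid components via the sublemmas of Theorem~\ref{thm:B(G,theta)}, and identify dotted edges with $u_q'(\mf{gl}_2)$-factors.

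One point of caution in your combinatorial sketch: the mechanism you invoke to force a linked pair to be isolated --- ``linear independence of the $w$'s inside each $I_{g,\chi}$'' --- is not quite what does the work. If $i\cdots j\cdots r$ are both linked, you do get $w_i,w_r\in \kk^\times\cdot w_j^{-1}$ and $\chi_i=\chi_r$, but there is no reason that $g_i=g_r$, so $i,r$ need not lie in the same $I_{g,\chi}$ and inner faithfulness gives no contradiction directly. The paper instead splits by order: for $n>2$ it runs the character chain $\chi_i(g_j)=\chi_j(g_j)^{-1}=\chi_r(g_j)=\cdots=\chi_i(g_j)^{-1}$ to force $\chi_i(g_j)^2=1$, a contradiction; for $n=2$ it first shows the component is a complete graph, then argues separately that all edges must be of one type and that a dotted triangle forces $w_i\in\kk$. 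The mixed case $i\cdots j\text{---}r$ is also handled by a character argument, not by linear independence. So your outline is right, but the actual isolation arguments are the character manipulations rather than the $I_{g,\chi}$-independence you cite.
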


\begin{proof}
The Hopf algebras $u_q'({\mathfrak{gl}}_2)$, $T(n,\zeta)$, $E(n)$, $\mathbf{h}(\zeta,1)$ are Galois-theoretical by \cite[Propositions~17,~19,~22, and~33]{PartI}, and by \cite[Proposition~10(5)]{PartI}, so is any tensor product $H$ of these Hopf algebras. Also, it is easy to check that any quotient 
of $H$ by a group $Z$ of central grouplike elements 
is Galois-theoretical, by looking at the action of $\overline{H}:=H/(z-1,z\in Z)$ on the field of 
$Z$-invariants $L^Z$ in an inner faithful $H$-module field $L$.  So the ``if" direction follows from Proposition~\ref{prop:minimal}.

Conversely, suppose that $A(G, \ug, \uchi, \underline{\alpha}, \underline{\lambda})$ is Galois-theoretical, then so is the subalgebra $A^i$ generated by $g_i$ and $x_i$ by \cite[Proposition~10(3)]{PartI}. Since $A^i$ is a generalized Taft algebra, then by \cite[Proposition~21]{PartI}, we get that $A^i$ is coradically graded. Thus, $\alpha_i =0$ for all $i$. Now it suffices to show that the minimal Hopf subalgebra $A'$ is a central quotient of  the tensor product of  $u_q'({\mathfrak{gl}}_2)$, $T(n,\zeta)$, $E(n)$, or $\mathbf{h}(\zeta,1)$.

Applying Theorem~\ref{thm:H(G,theta)} and Remark~\ref{rem:w_i} to the relation $x_ix_j = \chi_j(g_i) x_jx_i + \lambda_{ij}(1-g_ig_j)$ yields:
\[
\begin{array}{l}
{w_i}(1-g_i){w_j}(1-g_j) - \chi_j(g_i){w_j}(1-g_j){w_i}(1-g_i) - \lambda_{ij}(1-g_ig_j)\\
=[{w_i}{w_j}(1-\chi_j(g_i))- \lambda_{ij}](1-g_ig_j) =0.
\end{array}
\]
So either ${w_i}{w_j}(1-\chi_j(g_i)) = \lambda_{ij}$ or $g_ig_j=1$ for all $i \neq j$. 

To proceed, define a graph $\Gamma$ whose vertices are labelled $1, \dots ,\theta$ 
and which has two kinds of edges --- dotted ones and solid ones. Namely, 
\begin{itemize}
\item[$\ast$] a solid edge $i$---$j$ if and only if $g_ig_j=1$.
\item[$\ast$] a dotted edge $i \cdots j$ if and only if $g_ig_j\ne 1$ and ${w_i}{w_j}(1-\chi_j(g_i)) = \lambda_{ij}$, with $\lambda_{ij}\ne 0$.
\end{itemize}
Thus, $i$ and $j$ are not connected if and only if $g_ig_j\ne 1$, $\lambda_{ij}=0$, and $\chi_i(g_j)=1$. 
Note that two vertices cannot be connected by a solid and a dotted edge at the same time. 

Note that if $i$---$j$ then the order of $g_i$ equals the order of $g_j$. Also, if $i\cdots j$, then $w_iw_j \in \kk$ commutes with $g_s$  for all~$s$, and $\chi_i(g_s)=\chi_j(g_s)^{-1}$. So $\chi_i(g_i)=\chi_j(g_i)^{-1}=\chi_i(g_j)=\chi_j(g_j)^{-1}$. 
Thus if two vertices are connected by a dotted edge, then $g_i$ and $g_j$ also have the same order.
Hence, to each component $C$ of $\Gamma$ one can attach its order, which is the order of $g_i$ for any $i\in \Gamma$.  This is also the nilpotency order of $x_i$ for any $i\in \Gamma$, since $\alpha_i=0$. 

For a component $C$ of $\Gamma$, let $H(C)$ be the Hopf subalgebra of $A(G, \ug, \uchi, \underline{0}, \underline{\lambda})$ generated by $\{ g_i, x_i \}_{i \in C}$.

Let us consider the case when $\Gamma$ is connected. Assume the order of $\Gamma$ is  $n>2$. 
Suppose we have $i$---$j$---$r$ in $\Gamma$. Then by the proof of Sublemma~\ref{sublem:QLS2}, we come to a contradiction. Suppose now we have $i \cdots j\cdots r$ in $\Gamma$. Then a similar proof leads to a contradiction:
$$
\chi_i(g_j)=\chi_j(g_j)^{-1}=\chi_r(g_j)=\chi_j(g_r)^{-1}=\chi_i(g_r)=\chi_r(g_i)^{-1}=\chi_j(g_i)=\chi_i(g_j)^{-1}.
$$
Finally, suppose that $i\cdots j$---$r$. Then by the above, $i$ is not connected to $r$. So, $\chi_i(g_r)=1$, and $\chi_j(g_r)=\chi_j(g_j)=1$, a contradiction.  

This means that either $\Gamma$ has one 
vertex (so, $H(\Gamma$) = $T(n,\zeta)$) or two vertices connected by a solid edge (so, $H(\Gamma) = \mathbf{h}(\zeta,1)$) or dotted edge (so, $H(\Gamma$) = $u_q'({\mathfrak{gl}}_2)$ or its central quotient; see, e.g. the discussion before \cite[Proposition~30]{PartI}).

Now assume that $\Gamma$ is connected and the order of $\Gamma$ is   $n=2$. Then by the proof of  Sublemma~\ref{sublem:QLS1}, $\Gamma$ is a complete graph with solid and dotted edges. Indeed, the matrix $(\chi_i(g_j))$ is symmetric
(as $\chi_i(g_j)=\pm 1$), thus all its entries for $i,j\in \Gamma$ are the same, so they have to be $-1$.   

It is clear that if $i$---$j$---$r$ then $i$---$r$. Thus, if we only keep the solid edges, 
$\Gamma$ will fall apart into components $C_1,\dots,C_m$, such that for any $i\in C_k$, $j\in C_l$, $k\ne l$, we have $i\cdots j$. Thus if $i,r\in C_s$ and $j\in C_l$, $s\ne l$, then $w_i$ and $w_r$ are both scalar multiples of $w_j^{-1}$, and $g_i=g_r$ (as $n=2$), so we have a contradiction with inner faithfulness. Thus, 
either all edges in $\Gamma$ are solid or all are dotted. If all edges are solid, 
then $H(C)=E(n)$ for some $n$. Also, we cannot have a dotted triangle with vertices $i,j,r$, as 
then ${w_i}$ is proportional to ${w_j}^{-1}$, hence to ${w_r}$, hence to ${w_i}^{-1}$, so ${w_i}\in \kk$, a contradiction. Thus, if all edges of $\Gamma$ are dotted (and there is at least one edge), then $|\Gamma|=2$, 
and we get the Hopf algebra $u_q'({\mathfrak{gl}}_2)$ for $q=-1$.

Finally, let $\Gamma$ be arbitrary. Then, by mimicking Sublemma~\ref{sublem:QLS3}, 
we see that our Hopf algebra $A'$ is a quotient of 
the tensor product of the algebras corresponding to the connected components of $\Gamma$ by a subgroup of central grouplike elements. The proposition is proved. 
\end{proof}

\subsection{Lifting  bosonizations $\mf{B}(V) \# \kk G$, for $V$ of type A$_2$}
Let $H$ be a finite-dimensional, pointed Hopf algebra so that $gr(H) \cong \mf{B}(V) \# \kk G$, for $\mf{B}(V)$ of type A$_2$. Such $H$ have been classified for braiding parameter $q$, a primitive $m$-th root of unity, with $m>1$ an odd integer. Namely,
 by \cite[Theorems~3.6 and~3.7]{AndSch:p4} for $m >3$ and by \cite[Proposition~3.3]{BDR} for $m =3$.
 
 We proceed in the case when $m >3$. In this case, $H$ is generated by $G$ and $x_1, x_2$, where $x_i$ is $(g_i,1)$-skew-primitive and $g_i \in G$ and is subject to the relations of $G$,
\begin{itemize}
\item  $g x_i = \chi_i(g) x_i g$, \hfill for $i = 1,2$,
\item $x_i^m = \mu_i(1-g_i^m)$, \hfill for $i=1,2$,
\item  $\left[x_1x_2 - \chi_2(g_1)x_2x_1\right]x_1 = \chi_1(g_2)x_1 \left[x_1x_2 - \chi_2(g_1)x_2x_1\right]$,
\end{itemize}
along with other relations irrelevant to the proof of the result below. Here,
$\mu_1, \mu_2, \lambda \in \kk$,
$q = \chi_1(g_1) = \chi_2(g_2)$ and $\chi_1(g_2) \chi_2(g_1) = q^{-1}.$

\begin{proposition} \label{prop:liftA2} 
Let $H$ be a finite-dimensional pointed Hopf algebra so that $gr(H) \cong \mf{B}(V) \# \kk G$, for $\mf{B}(V)$ of type A$_2$, as above. If $H$ is Galois-theoretical, then $H$ is coradically graded, in which case we are in the setting of Theorem~\ref{thm:ranktwo}\textnormal{(}b\textnormal{)}. 
\end{proposition}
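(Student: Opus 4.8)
The plan is to follow the same strategy as in the type A$_1^{\times\theta}$ case (Theorem~\ref{thm:liftQLS}): use Proposition~\ref{anypoi} to reduce to an action of $H(G,\ug,\uchi)$, apply Theorem~\ref{thm:H(G,theta)} to write $x_i$ as $w_i(1-g_i)$ with $w_i\neq 0$, and then force the deformation parameters $\mu_1,\mu_2,\lambda$ to vanish by inspecting the elements $Q_\alpha\in\kk G$ attached to the lifted relations. Concretely, first I would invoke \cite[Proposition~10(3)]{PartI}: the Hopf subalgebra $A^i$ generated by $g_i,x_i$ is Galois-theoretical, hence by \cite[Proposition~21]{PartI} it is coradically graded, i.e.\ a genuine Taft algebra, so $\mu_1=\mu_2=0$. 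This handles the two root-vector type relations $x_i^m=\mu_i(1-g_i^m)$.

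Next I would treat the quantum Serre relation $[x_1x_2-\chi_2(g_1)x_2x_1]x_1=\chi_1(g_2)x_1[x_1x_2-\chi_2(g_1)x_2x_1]$. Since in degree one (in each $x_i$) this is exactly the coradically graded Serre relation of type A$_2$, writing $x_i\mapsto w_i(1-g_i)$ and pulling out the monomial $w_1^2w_2$ leaves an element $Q\in\kk G$ which must vanish because $H$ is Galois-theoretical (this is the content of Proposition~\ref{criter}, applied via Proposition~\ref{anypoi} to the possibly-noncoradically-graded $H$ — here one uses that the deformation term of the Serre relation lies in $H_1$, so the top-degree part of the operator identity still yields $Q=0$). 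By the rank-two analysis inside the proof of Theorem~\ref{thm:ranktwo}, vanishing of this $Q$ forces $q_{12}=1$, $q_{21}=1$, or $g_1^2g_2=1$ (equivalently $g_1g_2^2=1$ after relabelling). In each of these three sub-cases Proposition~\ref{prop:q12=1} or Proposition~\ref{prop:g1^2g2=1} identifies the coradically graded Hopf algebra $gr(H)'$ precisely, and crucially shows it \emph{is} Galois-theoretical.

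Now the key step: having shown $gr(H)$ (or its minimal part) is Galois-theoretical, I apply Proposition~\ref{bothcannot}, which says a nontrivial lifting of a Galois-theoretical Hopf algebra cannot itself be Galois-theoretical. Since $H$ is assumed Galois-theoretical and is a lifting of the Galois-theoretical $gr(H)$, it must be a \emph{trivial} lifting, i.e.\ $H$ is coradically graded. Then $H$ falls under Theorem~\ref{thm:ranktwo}(b) (the type A$_2$ outcomes with $q_{12}=1$ or $q_{21}=1$, together with the $3^4$-dimensional case; note that the hypothesis $m\geq 5$ in the statement of Theorem~\ref{thm:liftA2B2intro} excludes $m=3$, so we land in case (b)), completing the proof.

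The main obstacle I anticipate is bookkeeping around the deformation term of the Serre relation: unlike the quantum-linear-space case where the lifted relation $x_ix_j=\chi_j(g_i)x_jx_i+\lambda_{ij}(1-g_ig_j)$ is globally degree-preserving after the substitution, here $[x_1x_2-\chi_2(g_1)x_2x_1]x_1-\chi_1(g_2)x_1[x_1x_2-\chi_2(g_1)x_2x_1]$ is a pure cubic Serre-type relation (the actual A$_2$ lifting may involve additional deformation terms, but these are the ``other relations irrelevant to the proof''), so substituting $x_i\mapsto w_i(1-g_i)$ produces the monomial $w_1^2w_2$ times a single $\kk G$-element $Q$, and I need to confirm this $Q$ is exactly the $Q_{12}$ computed in the proof of Theorem~\ref{thm:ranktwo}. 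Once that identification is made — it is immediate since the cubic part of the relation coincides with the coradically graded Serre relation up to the relabelling of which index plays the ``$1$'' role — the rest is a direct appeal to the already-established Propositions~\ref{bothcannot}, \ref{prop:q12=1}, \ref{prop:g1^2g2=1}, and Theorem~\ref{thm:ranktwo}.
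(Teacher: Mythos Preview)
Your proposal is correct and follows essentially the same route as the paper's proof: first kill $\mu_1,\mu_2$ via the Taft subalgebras and \cite[Propositions~10(3),~21]{PartI}, then substitute $x_i\mapsto w_i(1-g_i)$ into the (undeformed, homogeneous) cubic Serre relation to extract the $\kk G$-element $(q_{12}-1)(q_{21}-1)(g_1^2g_2-1)$, conclude $q_{12}=1$, $q_{21}=1$, or $g_1^2g_2=1$, and finish with Theorem~\ref{thm:ranktwo} and Proposition~\ref{bothcannot}. Your anticipated obstacle does not arise, since the Serre relation listed carries no lower-order deformation term (the parameter $\lambda$ appears only in the ``other relations irrelevant to the proof''), so you need not invoke Proposition~\ref{criter} or worry about filtration degrees---the paper simply computes directly.
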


\begin{proof}
Suppose that $H$ is Galois-theoretical. Then, the subalgebra $H^i$ generated by $g_i$ and $x_i$ is also Galois-theoretical by \cite[Proposition~10(3)]{PartI}, for $i =1,2$. 
Since $H^i$ is a generalized Taft algebra, then by \cite[Proposition~21]{PartI}, $H^i$ is coradically graded. Thus, $\mu_1 = \mu_2 =0$. Now, $H$ is a quotient of $H(G,g_1, g_2,\chi_1,\chi_2)$ studied in Section~\ref{subsec:H(G,theta)}. 

We get by Theorem~\ref{thm:H(G,theta)} and Remark~\ref{rem:w_i} that $x_i$ acts as ${w_i}(1-g_i)$ for $i = 1,2$. From the relations $g_i {w_j} = \chi_j(g_i){w_j}g_i$ and $\chi_1(g_1)\chi_1(g_2)\chi_2(g_1) =1$ and $$\left[x_1x_2 - \chi_2(g_1)x_2x_1\right]x_1 = \chi_1(g_2)x_1 \left[x_1x_2 - \chi_2(g_1)x_2x_1\right],$$ we have that
\[
\begin{array}{l}
q_{21}{w_1}(1-g_1){w_1}(1-g_1){w_2}(1-g_2) 
- (q_{12}q_{21}+1) {w_1}(1-g_1){w_2}(1-g_2){w_1}(1-g_1)\\ 
\quad \quad + q_{12}{w_2}(1-g_2){w_1}(1-g_1){w_1}(1-g_1)\\
= {w_1}^2{w_2}\left[(q_{12}-1)(q_{21}-1)(g_1^2g_2-1) \right]  =0.
\end{array}
\]
So, $q_{21} = \chi_1(g_2) = 1$, $q_{12} = \chi_2(g_1) =1$, or $g_1^2g_2=1$.
By Theorem~\ref{thm:ranktwo}, we have that $H$ is a lift of a finite-dimensional, pointed, coradically graded Hopf algebra of type A$_2$ that is Galois-theoretical.  Now, $H$ is coradically graded by Proposition~\ref{bothcannot}.
\end{proof}

%\begin{example}
%\red{[Include example of a Galois-theoretical non-trivial lifting in the case when $m =3$]}
%\end{example}

\subsection{Lifting  bosonizations $\mf{B}(V) \# \kk G$, for $V$ of type B$_2$.}
Let $H$ be a finite-dimensional pointed Hopf algebra so that $gr(H) \cong \mf{B}(V) \# \kk G$, for $\mf{B}(V)$ of type B$_2$. Such $H$ have been classified for braiding parameter $q$, a primitive $m$-th root of unity, with $m \neq 5$ an odd integer. Namely,
 by \cite[Theorems~2.6 and~2.7]{BDR}, $H$ is generated by $G$ and $x_1, x_2$, where $x_i$ is $(g_i,1)$-skew-primitive and $g_i \in G$ and is subject to the relations of $G$,
\begin{itemize}
\item $g x_i = \chi_i(g) x_i g$ \hfill for $i = 1,2$,
\item $x_i^m = \mu_i(g_i^m-1)$ \hfill for $i=1,2$;
\item $x_2[x_2x_1 - \chi_1(g_2)x_1x_2] - \chi_1(g_2)\chi_2(g_2)[x_2x_1 - \chi_1(g_2)x_1x_2]x_2=0$,
\end{itemize}
along with other relations irrelevant to the proof of the result below. Here,
$\mu_1, \mu_2, \lambda \in \kk$,
$q^2 = \chi_1(g_1),$ $q = \chi_2(g_2),$ and $\chi_1(g_2) \chi_2(g_1) = q^{-2}.$

\begin{proposition} \label{prop:liftB2} 
Let $H$ be a finite-dimensional, pointed Hopf algebra so that $gr(H) \cong \mf{B}(V) \# \kk G$, for $\mf{B}(V)$ of type B$_2$, as above. If $H$ is Galois-theoretical, then $H$ is coradically graded, in which case we are in the setting of Theorem~\ref{thm:ranktwo}\textnormal{(}b\textnormal{)}.
\end{proposition}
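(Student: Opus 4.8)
The plan is to mirror, almost verbatim, the proof of Proposition~\ref{prop:liftA2}. First I would assume $H$ is Galois-theoretical and acts inner faithfully on a field $L\supset\kk$. For $i=1,2$ the Hopf subalgebra $H^i$ generated by $g_i$ and $x_i$ is then Galois-theoretical by \cite[Proposition~10(3)]{PartI}; since $H^i$ is a generalized Taft algebra (Krop--Radford), \cite[Proposition~21]{PartI} forces it to be an ordinary Taft algebra, so $\mu_i=0$ for $i=1,2$. Consequently $H$ is a quotient of $H(G,g_1,g_2,\chi_1,\chi_2)$, and by Theorem~\ref{thm:H(G,theta)} together with Remark~\ref{rem:w_i} the generator $x_i$ acts on $L$ as $w_i(1-g_i)$ for some $w_i\in L^\times$ (nonzero by inner faithfulness) with $g\cdot w_i=\chi_i(g)w_i$.

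The second step is to substitute $x_i\mapsto w_i(1-g_i)$ into the cubic (quantum Serre) relation $x_2[x_2x_1-\chi_1(g_2)x_1x_2]-\chi_1(g_2)\chi_2(g_2)[x_2x_1-\chi_1(g_2)x_1x_2]x_2=0$, move all the $w_i$ to the left using $g_iw_j=\chi_j(g_i)w_jg_i$, and simplify the resulting element of $\kk G$ using $\chi_1(g_1)=q^2$, $\chi_2(g_2)=q$, and $\chi_1(g_2)\chi_2(g_1)=q^{-2}$. In analogy with the computation \eqref{eq:uchi 2} for type A$_2$, I expect all the intermediate $\kk G$-terms to cancel and the identity to collapse to a single monomial, of the shape $w_2^2w_1\,(\chi_1(g_2)-1)(\chi_2(g_1)-1)(g_1g_2^2-1)=0$ in $\End_\kk(L)$. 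Since $w_1,w_2$ are nonzero and $G$ acts faithfully on $L$, this yields $\chi_1(g_2)=1$, or $\chi_2(g_1)=1$, or $g_1g_2^2=1$. Verifying that this simplification really does collapse to the clean product form is the one genuinely computational point and the step where I expect the most bookkeeping; everything else is parallel to the A$_2$ argument.

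Finally, I would eliminate the last alternative and conclude. If $\chi_1(g_2),\chi_2(g_1)\ne 1$, then $g_1g_2^2=1$; but $gr(H)\cong\mf{B}(V)\#\kk G$ is of Cartan type B$_2$ with these braiding parameters, so the computation behind equation \eqref{eq:zetaaij2} in the proof of Proposition~\ref{prop:g1^2g2=1} forces $\text{ord}(g_i)=5$. Since $H^2$ is an ordinary Taft algebra $T(m,q)$, we have $\text{ord}(g_2)=\text{ord}(q)=m$, so $m=5$, contradicting the hypothesis $m\ne 5$. Hence $\chi_1(g_2)=1$ or $\chi_2(g_1)=1$, and $gr(H)$ (which satisfies \eqref{eq:conditions}, as $q$ has odd order) falls into case (b) of Theorem~\ref{thm:ranktwo}; in particular $gr(H)$ is Galois-theoretical. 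As $H$ is a lifting of $gr(H)$ and both are Galois-theoretical, Proposition~\ref{bothcannot} forces $H$ to be a trivial lifting, i.e.\ $H=gr(H)$ is coradically graded, which is exactly the setting of Theorem~\ref{thm:ranktwo}(b).
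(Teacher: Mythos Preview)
Your proposal is correct and follows essentially the same route as the paper's proof: reduce to $\mu_1=\mu_2=0$ via the Taft subalgebras, substitute $x_i\mapsto w_i(1-g_i)$ into the cubic Serre relation to obtain (up to a nonzero scalar) $(q_{12}-1)(q_{21}-1)(g_1g_2^2-1)=0$, and then invoke Proposition~\ref{bothcannot}. The only difference is cosmetic: the paper dispatches the alternative $g_1g_2^2=1$ in one line by citing Theorem~\ref{thm:ranktwo} together with $m\neq 5$, whereas you spell out explicitly (via the order computation from Proposition~\ref{prop:g1^2g2=1}) that this case forces $m=5$ before applying Theorem~\ref{thm:ranktwo}(b).
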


\begin{proof}
Suppose that $H$ is Galois-theoretical. Then, the subalgebra $H^i$ generated by $g_i$ and $x_i$ is also Galois-theoretical by \cite[Proposition~10(3)]{PartI}, for $i =1,2$. 
Since $H^i$ is a generalized Taft algebra, then by \cite[Proposition~21]{PartI}, $H^i$ is coradically graded. Thus, $\mu_1 = \mu_2 =0$ and $m = \text{ord}(g_1) = \text{ord}(g_2)$. 

Now, $H$ is a quotient of $H(G,g_1, g_2,\chi_1,\chi_2)$ studied in Section~\ref{subsec:H(G,theta)}, and we get by Theorem~\ref{thm:H(G,theta)} and Remark~\ref{rem:w_i} that $x_i$ acts as ${w_i}(1-g_i)$ for $i = 1,2$. From the relations $g_i {w_j} = \chi_j(g_i){w_j}g_i$, $\chi_1(g_2)\chi_2(g_1)\chi_2(g_2) =1$ and 
$$x_2\left[x_2x_1 - \chi_1(g_2)x_1x_2\right] - \chi_1(g_2)\chi_2(g_2)\left[x_2x_1 - \chi_1(g_2)x_1x_2\right]x_2=0,$$ we have that
\[
\begin{array}{l}
{w_2}(1-g_2){w_2}(1-g_2){w_1}(1-g_1)
- (q_{21}+q_{21}q_{22}) {w_2}(1-g_2){w_1}(1-g_1){w_2}(1-g_2)\\ 
\quad \quad + q_{21}^2q_{22}{w_1}(1-g_1){w_2}(1-g_2){w_2}(1-g_2) \\
= {w_1}^2{w_2}\left[(q_{12}^{-1}-1)(1-q_{21})(g_1g_2^2-1) \right]  =0.
\end{array}
\]
So, $q_{12} = 1$, $q_{21} =1$, or $g_1g_2^2=1$.
Since $m \neq 5$, we have by Theorem~\ref{thm:ranktwo}  $H$ is a lift or a finite-dimensional, pointed, coradically graded Hopf algebra of type B$_2$ that is Galois-theoretical. Now $H$ is coradically graded by 
Proposition~\ref{bothcannot}. 
\end{proof}

%%%%%%%%%%%%%%%%%%%%%%%%%%%%%%%%%%

\section{Galois-theoretical small quantum groups of higher rank} \label{sec:smallqgroups} We study the Galois-theoretical property of the small quantum groups $u_q(\mf{g})$, $u_q^{\geq 0}(\mf{g})$, and gr$(u_q(\mf{g}))$, along with their Drinfeld twists. Here, let $\mf{g}$ be a finite-dimensional semisimple Lie algebra of type X. Note that $u_{q}^{\geq 0}(\mf{g})$, $u_q(\mf{g})$, and gr$(u_{q}(\mf{g}))$ (and their twists) are pointed Hopf algebras of finite Cartan type X, X $\times$ X, and  X $\times$ X, respectively. We use the notation and terminology of \cite[Sections~1.2,~1.4, and~3.9]{PartI}.

\begin{proposition} \label{prop:uq(g)}
Let $\mf{g}$ be a finite-dimensional semisimple Lie algebra, not of type A$_1^{\times r}$ for $r \geq 2$. Let $q \in \kk$ be a root of unity of odd order $m \geq 3$, with $m>3$ for $\mf{g}$ containing a component of type G$_2$. Then, we have the statements below.
\begin{enumerate}
\item $u_q(\mf{g})$ is Galois-theoretical if and only if $\mf{g} = \mf{sl}_2$.
\item $u_q^{\geq 0}(\mf{g})$ is Galois-theoretical if and only if $\mf{g} = \mf{sl}_2$. In this case, $u_q^{\geq 0}(\mf{sl}_2)$ is a Taft algebra of dimension $m^2$.
\item gr\textnormal{(}$u_q(\mf{g})$\textnormal{)} is not Galois-theoretical.
\end{enumerate}
\end{proposition}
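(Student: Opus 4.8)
The plan is to treat all three parts through one computation: at a pair of adjacent nodes of the Dynkin diagram, the quantum Serre relation becomes, on any would-be module field, an operator whose ``$\kk G$-part'' $Q$ (in the sense of Subsection~\ref{nar} and the proof of Theorem~\ref{thm:ranktwo}) is visibly nonzero under the hypotheses; this contradicts Galois-theoreticity via Proposition~\ref{criter} in the coradically graded cases (b), (c), and via Theorem~\ref{thm:H(G,theta)} in the lifting case (a). The ``if'' directions of (a), (b) are immediate: from its presentation $u_q^{\geq0}(\mf{sl}_2)$ is the Taft algebra $T(m,q^2)$, of dimension $m^2$ since $q$ has order $m$, which is Galois-theoretical by \cite{PartI}; and $u_q(\mf{sl}_2)$ is Galois-theoretical by \cite{PartI} (cf. Example~\ref{sl2ex}). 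Since $\mf g$ is semisimple and not of type A$_1^{\times r}$, if $\mf g\ne\mf{sl}_2$ then its diagram has a connected component of rank $\geq 2$, hence two adjacent nodes spanning a subdiagram of type A$_2$, B$_2$, or G$_2$; it is in this situation that I must establish non-Galois-theoreticity.

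The first step is the local computation. Let $g_1,g_2$ be the grouplikes at two such adjacent nodes, numbered so that $a_{12}=-1$ (both numberings work for type A$_2$), with corresponding $(g_s,1)$-skew-primitive generators $x_1,x_2$ (the $e$'s at the two nodes), so $g_sx_t=q_{st}x_tg_s$ and $q_{12}q_{21}=q_{11}^{a_{12}}$ by \eqref{eq:qij Cartan}. If, on a field $L$ with a faithful $G$-action, $x_s$ acts as $w_s(1-g_s)$ with $w_s$ a $\chi_s$-eigenvector (the form forced by Theorem~\ref{thm:H(G,theta)}), then substituting into the Serre relation $\mathrm{ad}_c(x_1)^2(x_2)=0$ (respectively $\mathrm{ad}_c(x_2)^2(x_1)=0$ in type B$_2$) and reorganising exactly as in the proof of Theorem~\ref{thm:ranktwo} shows that its left side acts by $w_1^2w_2\,Q$ (respectively $w_1w_2^2\,Q$), where, after using $q_{11}q_{12}q_{21}=1$, the element $Q\in\kk\langle g_1,g_2\rangle$ equals, up to sign, $(q_{11}q_{12}-1)(q_{12}-1)(1-g_1^2g_2)$ in types A$_2$, G$_2$ and $(q_{22}q_{21}-1)(q_{21}-1)(1-g_1g_2^2)$ in type B$_2$. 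Under the hypotheses --- $q$ of odd order $m\geq3$, with $m>3$ whenever a G$_2$-component occurs --- one has $q_{12}=q^{(\alpha_1,\alpha_2)}\ne1$ and $q_{21}\ne1$, because $(\alpha_1,\alpha_2)\in\{-1,-2,-3\}$ is not divisible by $m$ (oddness of $m$ rules out divisibility of $2$, and $m>3$ in a G$_2$-component rules out divisibility of $3$); hence also $q_{11}q_{12}=q_{21}^{-1}\ne1$. Moreover $g_1^2g_2\ne1$ and $g_1g_2^2\ne1$, since $g_1,g_2$ are independent elements of order $m\geq3$. Therefore $Q\ne0$. I will also need the one-node companion used for (c): setting $\bar f_i:=K_if_i$, a $(K_i,1)$-skew-primitive generator with character $\chi_{\bar f_i}=\chi_{e_i}^{-1}$, the relation $q_{ii}e_i\bar f_i-\bar f_ie_i=0$ (the degree-two part of $e_if_i-f_ie_i=\tfrac{K_i-K_i^{-1}}{q_i-q_i^{-1}}$) produces, by the computation in the example after Proposition~\ref{criter}, the element $(q_{ii}-1)(1-K_i^2)$, which is nonzero since $q_{ii}\ne1$ and $K_i$ has odd order $\geq3$.

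Then I would assemble the parts. For (b): $u_q^{\geq0}(\mf g)\cong\mf B(V)\#\kk G$ is finite-dimensional, pointed, coradically graded with $G$ abelian, and the Serre relation above is among its defining relations, so $Q\ne0$ and Proposition~\ref{criter} gives that $u_q^{\geq0}(\mf g)$ is not Galois-theoretical. For (c): $\mathrm{gr}(u_q(\mf g))$ is coradically graded, the relation $q_{ii}e_i\bar f_i-\bar f_ie_i=0$ is among its defining relations with a nonzero $\kk G$-part, so Proposition~\ref{criter} applies again (nothing beyond $\mf g\ne0$ is used). For (a): $u_q(\mf g)$ is a lifting and not coradically graded, so I argue differently. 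Suppose $u_q(\mf g)$ acts inner faithfully on a field $L$. For each node $i$ the Hopf subalgebra generated by $K_i,e_i$ is Galois-theoretical, hence (being a generalized Taft algebra) an ordinary Taft algebra by \cite{PartI}, so $\mathrm{ord}(q_{ii})=\mathrm{ord}(K_i)$; consequently the Hopf subalgebra $u_q^{\geq0}(\mf g)\subseteq u_q(\mf g)$ generated by $G$ and the $e_i$ is a quotient of $H(G,\ug,\uchi)$. Restricting the action: $G=G(u_q(\mf g))$ acts faithfully on $L$ (since $\langle g-1\rangle$ is a Hopf ideal for each $g\in G$), and by Theorem~\ref{thm:H(G,theta)} each $e_i$ acts as $w_i(1-K_i)$; since $\langle e_i\rangle$ is a Hopf ideal of $u_q(\mf g)$, inner faithfulness forces $w_i\in L^\times$. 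Applying the local computation at two adjacent nodes $i,j$ (with $g_1,g_2=K_i,K_j$) then yields the operator identity $w^\ast Q=0$ on $L$ with $w^\ast\in L^\times$ and $Q\in\kk\langle K_i,K_j\rangle$ the nonzero element found above; since $\langle K_i,K_j\rangle$ acts faithfully, $Q$ acts injectively, so $w^\ast Q\ne0$ --- a contradiction. Hence $u_q(\mf g)$ is not Galois-theoretical.

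The hard part will be bookkeeping rather than anything conceptual. One must match the conventions of \cite{PartI} for $u_q(\mf g)$ and $u_q^{\geq0}(\mf g)$: the grouplike groups (so that $K_i,K_j$ are independent of order $m$, giving $g_1^2g_2\ne1$ etc.), the braiding exponents $(\alpha_i,\alpha_j)$, and the equality $\mathrm{ord}(q_{ii})=\mathrm{ord}(K_i)$ needed for the $H(G,\ug,\uchi)$-formalism and for identifying the one-node Borel piece as a Taft algebra. The only delicate point is a G$_2$-component with $3\mid m$ (which ``$m>3$'' alone permits): if the conventions of \cite{PartI} force $3\nmid m$ for G$_2$ this does not arise; otherwise one observes that at the triple-bond node $\mathrm{ord}(q_{ii})<\mathrm{ord}(K_i)$, so the Hopf subalgebra generated by $K_i,e_i$ is a non-ordinary generalized Taft algebra, which is not Galois-theoretical by \cite{PartI} --- killing $u_q(\mf g)$ and $u_q^{\geq0}(\mf g)$ directly, without recourse to the rank-two computation. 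All of this is routine under the stated restrictions on $m$.
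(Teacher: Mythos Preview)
Your proposal is correct and follows essentially the same strategy as the paper: for (a) and (b) you compute the $\kk G$-element $Q$ attached to the quantum Serre relation at two adjacent nodes and show it is nonzero, exactly as the paper does (the paper carries out the same substitution $e_i\mapsto w_i(1-k_i)$ and obtains, up to normalization, the factor $(k_i^2k_j-1)$ times a nonzero scalar). One small difference worth noting: for part (c) the paper reuses the same Serre-relation computation (needing rank $\ge 2$) and then cites \cite{PartI} separately for ${\rm gr}(u_q(\mf{sl}_2))$, whereas you instead use the single-node relation $q_{ii}e_i\bar f_i-\bar f_ie_i=0$ in ${\rm gr}(u_q(\mf g))$, which gives $(q_{ii}-1)(1-K_i^2)\ne 0$ and handles all $\mf g$ (including $\mf{sl}_2$) uniformly---this is the example immediately after Proposition~\ref{criter}, generalized in the obvious way. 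Your extra care about a $G_2$-component with $3\mid m$ (checking that the node subalgebra is then a non-ordinary generalized Taft algebra) is prudent bookkeeping that the paper does not spell out; it does not affect the outcome.
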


\begin{proof} 
We have that both $u_q(\mf{sl}_2)$ and $u_q^{\geq 0}(\mf{sl}_2)$ are Galois-theoretical by \cite[Propositions~25(2) and~17]{PartI}. Moreover, gr($u_q(\mf{sl}_2)$) is not Galois-theoretical by \cite[Proposition~25(1)]{PartI}.

On the other hand, let $\mf{g}$ have rank $\geq 2$. Note that there exists $i\neq j$, so that $a_{ij}=-1$ with $|i-j| =1$, since $\mf{g}$ is not of type A$_1^{\times r}$ for $r \geq 2$. The Hopf algebras $u_q(\mf{g})$, $u_{q}^{\geq 0}(\mf{g})$ and gr$(u_{q}(\mf{g}))$ have a Serre relation given as follows:
\begin{align} 
e_i^2 e_j - (q^d +q^{-d})e_ie_je_i +e_je_i^2 = 0 \label{eq:Serre(b)},
\end{align} 
where $i \neq j$ as above, and $e_i$ is a $(k_i, 1)$-skew-primitive element for a grouplike element $k_i$. 
Here, $d=1$ for type ADE, $d=2$ for type BCF, and $d=3$ for type G. 

By way of contradiction, suppose that these Hopf algebras $H$ are Galois-theoretical with module field $L$. These Hopf algebras are quotients of $H(G,\underline{g}, \underline{\chi})$ studied in Section~\ref{subsec:H(G,theta)}, so we get by Theorem~\ref{thm:H(G,theta)} that $e_i$ acts on $L$ by formula~\eqref{eq:formula xi}. By Remark~\ref{rem:w_i}, $e_i$ acts as ${w_i}(1-k_i)$. 
Now the relations between $k_i$ and $e_j$ of $H$ from \cite[Definition~2]{PartI} imply that
$$\chi_i(k_i) = q^{2d}, \quad \quad \chi_i(k_j) = q^{{-d}}, \quad \quad \chi_j(k_i) = q^{-d}, \quad \quad \chi_j(k_j) = q^{2}.$$
As $k_i {w_j} = \chi_j(k_i) {w_j} k_i$, we have from \eqref{eq:Serre(b)} that
\[
\begin{array}{rl}

0 =& {w_i}(1-k_i){w_i}(1-k_i){w_j}(1-k_j)
-(q^d +q ^{-d}){w_i}(1-k_i){w_j}(1-k_j){w_i}(1-k_i)\\
\medskip

&\quad \quad+{w_j}(1-k_j){w_i}(1-k_i){w_i}(1-k_i)\\
=&{w_i}^2 {w_j}[
(1-q^d k_i)(1-q^{-d}k_i)(1-k_j)
-(q^d+q^{-d})(1-q^dk_i)(1-q^{{-d}}k_j)(1-k_i)\\
\medskip

&\quad \quad +(1-q^{-2d}k_j)(1-q^{2d}k_i)(1-k_i)]\\

=&
\begin{cases}
{w_i}^2 {w_j}[q^{-1}(q-1)^2(k_i^2k_j-1)] & \text{ if } d=1, \\
{w_i}^2 {w_j}[q^{-2}(q-1)^2(q+1)^2(k_i^2k_j-1)] & \text{ if } d=2, \\
{w_i}^2 {w_j}[q^{-3}(q-1)^2(q^2+q+1)^2(k_i^2k_j-1)] & \text{ if } d=3.
\end{cases}
\end{array}
\]
Given the conditions on the order of $q$ and the fact that $k_i^2k_j \neq 1$, we arrive at a contradiction. Thus, $u_q(\mf{g})$, $u_q^{\geq 0}(\mf{g})$, and gr($u_q(\mf{g})$) are not Galois-theoretical when $\mathfrak{g}$ is of rank $\geq 2$ not of type A$_1^{\times r}$. 
\end{proof}

The following result characterizes the Galois-theoretical properties of Drinfeld twists of small quantum groups.

\begin{proposition} \label{prop:uq(g)twist} Let $\mf{g}$ be a finite-dimensional simple Lie algebra, and retain the notation of Proposition~\ref{prop:uq(g)}. Let $m$ be relatively prime to det$(a_{ij})$, and to 3 in type G$_2$. Let $J$ be a Drinfeld twist of $u_q(\mf{g})$ induced from its Cartan subgroup. Then, we have the following statements.
\begin{enumerate}
\item There are precisely $2^{\text{rank}(\mf{g})-1}$ twists $J$ \textnormal{(}up to gauge transformation\textnormal{)} so that $u_q^{\geq 0}(\mf{g})^J$ is Galois-theoretical. 
\item The Hopf algebra $u_q(\mf{g})^J$ can be Galois-theoretical if and only if $\mf{g} = \mf{sl}_n$. In this case, there are only two of such twists $J$
for $n \geq 3$, and one \textnormal{(}namely, $J=1$\textnormal{)} for $n=2$, up to a gauge transformation.
\end{enumerate}

\end{proposition}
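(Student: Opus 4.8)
The plan is to handle (a) and (b) together, exploiting the fact that, up to gauge, a Reshetikhin twist $J$ of $u_q(\mf g)$ is the same datum as an alternating bicharacter $\sigma$ on the Cartan subgroup $G=\langle k_1,\dots,k_n\rangle$ (so $\sigma(k_i,k_j)\sigma(k_j,k_i)=1$, where $n=\mathrm{rank}(\mf g)$), and that its effect on the braiding matrix of the skew-primitive generators is $q_{st}\mapsto\widetilde q_{st}:=\sigma(g_s,g_t)\,q_{st}$, with $g_s$ the grouplike part of $x_s$. In particular all self-braidings $q_{ss}$, all products $q_{st}q_{ts}$, and more generally the self-braiding of every Cartan--Weyl root vector are unchanged by the twist; hence $u_q^{\ge0}(\mf g)^J$ is again pointed and coradically graded of finite Cartan type X, while $u_q(\mf g)^J$ is a lifting --- in general non-graded, because of the relations $e_if_i-f_ie_i=(k_i-k_i^{-1})/(q-q^{-1})$ --- of a bosonization of type X$\,\times\,$X. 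So for (a) I will use Proposition~\ref{criter} (Galois-theoretical $\iff$ every $Q_\alpha\in\kk G$ vanishes), and for (b) I will use Proposition~\ref{anypoi} together with an explicit computation, as in Example~\ref{sl2ex}, of the images of all relations of $u_q(\mf g)^J$ under $x_s\mapsto w_s(1-g_s)$.

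\emph{Part (a).} For a pair $i\ne j$ with $a_{ij}=0$ one already has $q_{ij}=q_{ji}=1$, and the relation $e_ie_j=\widetilde q_{ij}e_je_i$ gives $Q_{ij}=(1-\widetilde q_{ij})(1-k_ik_j)$ with $k_ik_j\ne 1$, so $\sigma$ is forced to be trivial there. For a joined pair $i\sim j$ the element $Q_{ij}$ attached to the Serre relation $\mathrm{ad}_c(e_i)^{1-a_{ij}}(e_j)=0$ is computed exactly as in the proof of Theorem~\ref{thm:ranktwo} (equation~\eqref{eq:uchi 2} and its B$_2$, G$_2$ analogues), and $Q_{ij}=0$ forces $\widetilde q_{ij}=1$, or $\widetilde q_{ji}=1$, or one of $k_i^2k_j=1$, $k_ik_j^2=1$ in $G$; the last possibilities are excluded by the hypothesis that $m$ is prime to $\det(a_{ij})$ and, in type G$_2$, to $3$, which controls the relations holding in the Cartan subgroup. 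Since $\widetilde q_{ij}\widetilde q_{ji}=q_{ii}^{a_{ij}}\ne 1$ by the order hypotheses on $q$, exactly one of $\widetilde q_{ij}=1$, $\widetilde q_{ji}=1$ holds, and the two choices pin down $\sigma(k_i,k_j)$ to two distinct values. As the Dynkin diagram of a simple $\mf g$ is a tree on $n$ vertices, it has $n-1$ edges, so the alternating bicharacters obeying these necessary conditions are in bijection with orientations of the diagram: there are $2^{\,n-1}$ of them. Conversely, each such $\sigma$ comes from an honest twist $J$, and for it all $Q_\alpha$ vanish --- the Serre ones by the rank-two verification of Theorem~\ref{thm:ranktwo}(b), the root-vector ones because self-braidings (hence $k_\beta^{\,n_\beta}=1$) are twist-invariant --- so that $u_q^{\ge0}(\mf g)^J$ is Galois-theoretical by Proposition~\ref{criter} and \cite[Proposition~38]{PartI}. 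This gives (a).

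\emph{Part (b).} Present $u_q(\mf g)^J$ by its grouplikes $k_i$ and the $(k_i,1)$-skew-primitives $e_i$ and $\widetilde f_i:=k_if_i$. If it is Galois-theoretical then, as in Example~\ref{sl2ex}, the action descends from $H(G,\ug,\uchi)$ with $e_i\mapsto a_i(1-k_i)$ and $\widetilde f_i\mapsto b_i(1-k_i)$, and every relation becomes an operator identity. Applying Part (a)'s analysis to the $e$-Serre and (separately) the $f$-Serre relations attaches an orientation of each edge of the diagram coming from the $e$'s and one coming from the $f$'s; the linking relations $e_if_i-f_ie_i=(k_i-k_i^{-1})/(q-q^{-1})$ are individually solvable for the scalars $a_ib_i\in\kk^\times$, the $\kk G$-coefficient being proportional to $1-k_i^2$ exactly as the relation $q^2x_1x_2-x_2x_1=(k^2-1)(q-q^{-1})^{-1}$ is in Example~\ref{sl2ex}. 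The decisive constraints come from the cross relations between $e_i$ and $\widetilde f_j$ (and between $f_i$ and the corresponding elements) for $i\sim j$: under the twist these become $q$-commutation / linking relations whose elements $Q\in\kk G$ I will compute; they couple the $e$- and $f$-orientations and, at any branch vertex or multiple edge of the diagram, yield an equation $q^{c}=1$ with $0<|c|$ bounded and prime to $m$ --- impossible. Hence the diagram has neither a branch vertex nor a multiple edge, i.e. $\mf g=\mf{sl}_n$. A direct count of the $\sigma$ surviving all constraints then gives exactly two when $n\ge 3$, and, when $n=2$, only $\sigma=1$ (an alternating bicharacter on $\Z_m$ with $m$ odd is trivial). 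Finally one verifies that for these $\sigma$ the Hopf algebra $u_q(\mf{sl}_n)^J$ really is Galois-theoretical, by exhibiting an inner faithful action on a rational function field extending the one of Example~\ref{sl2ex} (or by citing the corresponding result of \cite{PartI}); recalling that gauge-equivalent twists produce isomorphic Hopf algebras finishes the proof.

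I expect the main obstacle to lie in Part (b): one must determine exactly how each family of relations of $u_q(\mf g)$ --- the $e$-Serre, the $f$-Serre, the higher Serre relations present in types B, C, F and G, and the $e_if_j-f_je_i$ relations --- transforms under a Reshetikhin twist, compute the associated elements $Q\in\kk G$, and show that the resulting system of conditions on $\sigma$ is consistent precisely for type A and has precisely the stated number of solutions; verifying that ``counting twists up to gauge'' matches ``counting alternating bicharacters on $G$'', and treating the degenerate rank-one Cartan subgroup in the $n=2$ case, are the remaining points requiring care.
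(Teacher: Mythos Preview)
Your argument for Part~(a) is essentially the paper's own: reduce to the rank-two classification (Theorem~\ref{thm:ranktwo}) on each edge, conclude that an orientation of the Dynkin diagram is forced, and count orientations of a tree. The paper cites \cite[Proposition~38]{PartI} for the existence direction rather than re-verifying the root-vector relations, but this is a cosmetic difference.

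For Part~(b), however, there is a genuine gap, and the paper's route is both different and substantially cleaner than the one you sketch. Your plan is to extract an ``$e$-orientation'' from the $e$-Serre relations and an ``$f$-orientation'' from the $f$-Serre relations, and then hope that the cross relations $e_if_j=f_je_i$ (for $i\ne j$) couple the two and rule out multiple edges and branch vertices. But you never carry out this computation, and it is not at all clear that the elements $Q\in\kk G$ arising from those cross relations actually produce a contradiction of the form $q^c=1$ precisely at branch points or multiple edges. In particular, nothing in your Serre-relation analysis forces $\mf g$ to be simply laced --- Part~(a) holds in all types --- so the entire burden of excluding types B, C, F, G, D, E falls on the unspecified cross-relation computation.

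The paper avoids all of this. For each pair $i\ne j$ it considers the Hopf subalgebra generated by $e_i$, $f_j$, and the Cartan; since $[e_i,f_j]=0$ this is coradically graded of type $A_1\times A_1$, so its minimal Hopf subalgebra $H_{ij}$ must, by Theorem~\ref{thm:ranktwo}(a), be either $T\otimes T$ or a book algebra $\mathbf h(\zeta,1)$. Writing down the $2\times 2$ braiding matrix of $H_{ij}$ explicitly in terms of the bicharacter $b_J$, one finds that for each edge $i\sim j$ exactly one of $H_{ij},H_{ji}$ is a book algebra; the book-algebra condition then forces $q^{2d_i}=q^{-2d_ia_{ij}}$, hence $a_{ij}=a_{ji}=-1$, i.e.\ $\mf g$ is simply laced. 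Moreover, Sublemma~\ref{sublem:QLS2} (applied to book algebras) forbids two arrows into or out of a single vertex, which eliminates branch points and leaves only type $A_{n-1}$ with its two linear orientations. This argument never touches the Serre relations at all --- everything comes from the $A_1\times A_1$ classification applied to mixed $(e_i,f_j)$ pairs. If you want to rescue your approach, the missing piece is exactly this analysis of $H_{ij}$; once you have it, the separate $e$- and $f$-orientations become redundant.
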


\begin{proof}
(a) \cite[Proposition~37]{PartI} provides $2^{\text{rank}(\mf{g})-1}$ twists giving rise to Galois-theoretical examples, so our job is to show that no other 
twist works. Let $J$ be a twist such that $u_q^{\geq 0}(\mf{g})^J$ is Galois-theoretical. Then by the classification in the rank $2$ 
case (Theorem \ref{thm:ranktwo}), for each $i,j$, either $q_{ij}=1$ or $q_{ji}=1$ (and both hold if $i,j$ are not connected). This 
defines an orientation on the Dynkin diagram of $\mf{g}$ (with $i\to j$ if $q_{ij}\ne 1$), and there are exactly $2^{\text{rank}(\mf{g})-1}$ possibilities. 
Thus, there are no possibilities for $J$ beyond the above, proving~(a). 

(b)
Let $J$ be a twist such that $u_q(\mathfrak{g})^J$ be Galois-theoretical, and $b_J$ be the alternating bicharacter corresponding to $J$ as in \cite[Proposition~7]{PartI}.
Let us consider generators $e_i$ and $f_j$ of $u_q(\mathfrak{g})^J$, for $j\ne i$. Together with Cartan elements, they generate a coradically graded Hopf subalgebra 
of type $A_1\times A_1$. Let $H_{ij}$ be its minimal Hopf subalgebra. Then by Theorem \ref{thm:ranktwo}, $H_{ij}$ is either a tensor product of two Taft algebras or a book algebra of 
type $\mathbf{h}(\zeta,1)$. 

Let $b_{ij}:=b_J(k_i,k_j)$. It is easy to check that the Cartan matrix of $H_{ij}$ has the form 
$$
A_{ij}=\biggl(\begin{matrix} q^{2d_i} & q^{-d_ia_{ij}}b_{ij}\\ q^{d_ja_{ji}}b_{ij}^{-1} & q^{-2d_j}\end{matrix}\biggr) 
$$
So, if $H_{ij}$ is a tensor product of two Taft algebras, we must have $b_{ij}=q^{d_ia_{ij}}$. Then $b_{ji}\neq q^{d_ja_{ji}}$, as the matrix $(d_ia_{ij})$ is symmetric, while 
$b_{ij}=b_{ji}^{-1}$. Thus, if $i$---$j$ then $H_{ij}$ or $H_{ji}$ is a book algebra. 

If $H_{ij}$ is a book algebra, then 
\begin{equation}\label{bookal}  
q^{2d_i}=q^{-d_ia_{ij}}b_{ij},
\end{equation} 
 and if $H_{ji}$ is a book algebra, then 
\begin{equation}\label{bookal1}  
q^{-2d_i}=q^{d_ia_{ij}}b_{ji}^{-1}. 
\end{equation} 
Taking the product of \eqref{bookal},\eqref{bookal1}, we get $b_{ij}=b_{ji}$, a contradiction. Thus, $H_{ij}$ and $H_{ji}$ cannot both be book algebras.
So, if $i$---$j$, then exactly one of the algebras $H_{ij},H_{ji}$ is a tensor product and exactly one is a book algebra. 
 
Introduce an orientation on the Dynkin diagram of $\mf{g}$ by putting $i\to j$ if $H_{ij}$ is a book algebra. 
If $i\to j$, then $H_{ji}$ is a tensor product of Taft algebras, so 
$b_{ji}=q^{d_ja_{ji}}$ and hence $b_{ij}=q^{-d_ia_{ij}}$. 
Thus, by \eqref{bookal} we have $q^{2d_i}=q^{-2d_ia_{ij}}$. Also $q^{2d_i}=q^{2d_j}$. So $a_{ij}=a_{ji}=-1$, 
i.e., $\mf{g}$ is simply laced.

Next, 
by (the proof of) Sublemma~\ref{sublem:QLS2}, 
we cannot have $i\to j$ and $r\to j$ for $i,j,r$ distinct, 
or $j\to i$ and $j\to r$ for $i,j,r$ distinct. 
Hence, the Dynkin diagram of $\mf{g}$ cannot contain a triple vertex. 
So the diagram $Q$ and the Lie algebra $\mf{g}$ 
must be of type $A_{n-1}$ for some $n$. Moreover, the above constraint on the orientation also implies that all the edges of the Dynkin diagram
are oriented in the same direction, i.e. we are left with just two orientations (which coincide if $n=2$, i.e., there are no edges). 

Finally, the twists $J^\pm$ corresponding to the remaining two orientations indeed give rise to Galois-theoretical Hopf algebras 
by \cite[Corollary~31]{PartI}. The theorem is proved. 
\end{proof}

%%%%%%%%%%%%%%%%%%%%%%%%%%%%%%%%%%%%%%%%
%%%%%%%%%%%%%%%%%%%%%%%%%%%%%%%%%%%%%%%%
%%%%%%%%%%%%%%%%%%%%%%%%%%%%%%%%%%%%%%%%

\section{Further directions} \label{sec:directions}

In this work we studied the Galois-theoretical property of the most extensively studied class of finite-dimensional, pointed Hopf algebras: those of finite Cartan type.  The study of the Galois-theoretical property of  such Hopf algebras of higher rank ($\geq 3$) is open in general (and some cases of liftings in rank 2 have also not been treated here). We also suggest the following tasks: investigate the Galois-theoretical property of finite-dimensional, pointed Hopf algebras $H$ with $G(H)$ abelian 
\begin{itemize}
\item of {\it standard type} (which properly includes finite Cartan type) \cite{AndAngiono, Angiono:standard},
\item of {\it super type} \cite{AAY}, or
\item of {\it unidentified type} \cite{Angiono:unidentified}.
\end{itemize}
Moreover, it would be interesting to continue this study for known finite-dimensional, pointed Hopf algebras over a non-abelian group of grouplike elements \cite{AG, Zoo}, or over a cosemisimple Hopf algebra (especially when the Nichols algebra is of Cartan type A) \cite{AAI}. 

%%%%%%%%%%%%%%%%%%%%%%%%%%%%%%%%%%%%%%%%

\section*{Acknowledgments}
The authors are grateful to Nicol\'{a}s Andruskiewitsch and Iv\'{a}n Angiono for supplying numerous suggestions and valuable references.
The authors were supported by the National Science Foundation: NSF-grants DMS-1502244 and DMS-1550306.

\def\cprime{$'$}

%\bibliography{EW_NonssComDomain.bib}

\end{document}